\documentclass[a4paper,11pt]{amsart}
\usepackage{amssymb,amsmath,amsthm}
\usepackage{fullpage, subcaption}
\usepackage{color, mathtools, multicol}
\definecolor{darkred}{rgb}{0.6,0.2,0.2}
\usepackage[colorlinks=true,linkcolor=blue, citecolor=darkred]{hyperref}
\usepackage{eucal, beton, lmodern}
\usepackage{float}

\usepackage{xcolor, tcolorbox}
\usepackage{stackrel, enumerate}
\usepackage{graphicx, bbm}

\definecolor{darkblue}{rgb}{0.2,0.2,0.6}

\newcommand\nb{\nabla}
\newcommand{\beq}{\begin{equation} \begin{split}}
\newcommand{\eeq}{\end{split} \end{equation}}
\newcommand\Sg{\Sigma}
\newcommand\Omg{\Omega}

\makeatletter
\def\section{\@startsection{section}{1}\z@{.9\linespacing\@plus\linespacing}%
	{.7\linespacing} {\fontsize{13}{14}\selectfont\bfseries\centering}}
\def\paragraph{\@startsection{paragraph}{4}%
	\z@{0.3em}{-.5em}%
	{$\bullet$ \ \normalfont\itshape}}

\@addtoreset{equation}{section}
\makeatother

\renewcommand\and{\qquad\text{and}\qquad}

\newcommand\sm{\setminus}

\newcommand\dl{\delta}

\newcommand{\comm}[1]{}

\def\sfS{\mathsf{S}}
\def\sfS{\mathsf{S}}

\def\bm1{\mathbbm{1}}
\def\G{\Gamma}

\def\s{\sigma}

\def\p{\partial}

\def\omg{\omega}

\def\Re{{\rm Re}\,}

\def\arr{\rightarrow}

\newcommand{\sfJ}{\mathsf{J}}

\def\lm{\lambda}

\def\s{\sigma}

\def\sess{\sigma_{\rm ess}}

\def\ii{{\mathsf{i}}}
\def\p{\partial}

\def\kp{\kappa}

\def\sfP{\mathsf{P}}
\def\dd{{\,\mathrm{d}}}

\def\omg{\omega}

\def\sfS{\mathsf{S}}

\def\sfP{\mathsf{P}}

\newcounter{counter_a}
\newenvironment{myenum}{\begin{list}{{\rm(\roman{counter_a})}}%
{\usecounter{counter_a}
\setlength{\itemsep}{1.ex}\setlength{\topsep}{0.8ex}
\setlength{\leftmargin}{5ex}\setlength{\labelwidth}{5ex}}}{\end{list}}

\usepackage[latin1]{inputenc}
\usepackage[T1]{fontenc}

\newcommand{\eg}{{\it e.g.}\,}
\newcommand{\ie}{{\it i.e.}\,}
\newcommand{\cf}{{\it cf.}\,}

\numberwithin{figure}{section}
\numberwithin{equation}{section}
\theoremstyle{plain}
\newtheorem*{thm*}{Theorem}
\newtheorem{thm}{Theorem}[section]
\newtheorem{hyp}[thm]{Hypothesis}
\newtheorem{lem}[thm]{Lemma}
\newtheorem{prop}[thm]{Proposition}

\theoremstyle{remark}
\newtheorem{remark}[thm]{Remark}
\theoremstyle{plain}

\newcommand{\supp}{\mathrm{supp}\,}
\newcommand{\beu}{\begin{equation*}}
\newcommand{\eeu}{\end{equation*}}
\newcommand{\besu}{\begin{equation*}
\begin{aligned}}
\newcommand{\eesu}{\end{aligned}
\end{equation*}}
\newcommand{\bes}{\begin{equation}
\begin{aligned}}
\newcommand{\ees}{\end{aligned}
\end{equation}}

\newcommand\cA{\mathcal A}
\newcommand\cB{\mathcal B}
\newcommand\cD{\mathcal D}

\newcommand\cH{\mathcal H}

\newcommand\CC{\mathbb C}
\newcommand\NN{\mathbb N}
\newcommand\RR{\mathbb R}

\newcommand\fra{\mathfrak a}
\newcommand\frq{\mathfrak q}

\newcommand\frp{\mathfrak p}

\newcommand\eps{\varepsilon}

\newcommand\ov{\overline}
\newcommand\wt{\widetilde}

\newcommand\void[1]{}

\def\ov{\overline}
\def\eps{\varepsilon}

\def\frb{{\mathfrak b}}

\def\frt{{\mathfrak t}}

      \def\dC{{\mathbb C}}

   \def\dN{{\mathbb N}}   
      \def\dR{{\mathbb R}}

\def\sfD{{\mathsf D}}      
      
\def\sfJ{{\mathsf J}}      
      
\def\sfP{{\mathsf P}}      
\def\sfS{{\mathsf S}}   \def\sfT{{\mathsf T}}

\def\cA{{\mathcal A}}   \def\cB{{\mathcal B}}   
\def\cD{{\mathcal D}}      
   \def\cH{{\mathcal H}}

      \def\cU{{\mathcal U}}

\newcommand{\dom}{\mathrm{dom}\,}

\newcommand\frd{\mathfrak{d}}
\newcommand\fri{\mathfrak{i}}

\usepackage[left=2.7cm, right=2.7cm, marginparwidth=2cm, textheight = 24cm ]{geometry}
\usepackage[normalem]{ulem}
\definecolor{DarkGreen}{rgb}{0,0.5,0.1}

\definecolor{DarkBlue}{rgb}{0,0.1,0.5}

\newcommand\soutD{\bgroup\markoverwith
	{\textcolor{DarkGreen}{\rule[.5ex]{2pt}{1pt}}}\ULon}
\newcommand\soutP{\bgroup\markoverwith
	{\textcolor{blue}{\rule[.5ex]{2pt}{1pt}}}\ULon}
\newcommand{\Hm}[1]{\leavevmode{\marginpar{\tiny%
			$\hbox to 0mm{\hspace*{-0.5mm}$\leftarrow$\hss}%
			\vcenter{\vrule depth 0.1mm height 0.1mm width \the\marginparwidth}%
			\hbox to
			0mm{\hss$\rightarrow$\hspace*{-0.5mm}}$\\
			\relax\raggedright #1}}}

\newcommand{\R}{\mathbb{R}}

\newtheorem{theorem}{Theorem}[section]

\newtheorem{lemma}[theorem]{Lemma}

\newcounter{intro}

\newtheorem{maintheorem}[intro]{Theorem}

\theoremstyle{remark}
\newtheorem{ex}[thm]{Example}

\begin{document}
\title{On the discrete spectrum of Dirac operators with Lorentz-scalar \boldmath{$\dl$}-shell interactions supported on unbounded curves}

\author[M. Holzmann]{Markus Holzmann}
\address{(M.~Holzmann)
	Institut f\"{u}r Angewandte Mathematik,
Technische Universit\"{a}t Graz,
Steyrergasse 30, A 8010 Graz, Austria
}
\email{holzmann@math.tugraz.at}
\author[V.~Lotoreichik]{Vladimir Lotoreichik}
\address{(V.~Lotoreichik)
	Department of Mathematics, Faculty of Nuclear Sciences and Physical
Engineering,
 Czech Technical University in Prague, Trojanova 13, 120 00, Prague,
Czech Republic 
}
\email{vladimir.lotoreichik@gmail.com}

\author[M. Vogel]{Marco Vogel}
\address{(M.~Vogel)
	Fakult\"{a}t f\"{u}r Mathematik,
Technische Universit\"{a}t Dortmund,
Vogelpothsweg 87, 44227 Dortmund, Germany
}
\email{marco.vogel@tu-dortmund.de}

\subjclass{81Q10; 35P15}

\keywords{Dirac operators with Lorentz scalar $\delta$-shell potentials; geometrically induced bound states; finiteness of discrete spectrum}

\begin{abstract}
	We consider the massive Dirac operator (with positive mass) in the plane with an attractive Lorentz-scalar $\delta$-shell interaction of strength $\tau\in(-\infty,0)\sm\{-2\}$ supported on a $C^\infty$-smooth curve $\Sg\subset\dR^2$ being a local deformation of the broken line.
	This singular interaction is defined by imposing a suitable transmission condition on the curve $\Sg$ in the operator domain.
	Such a Dirac operator is self-adjoint and has a gap in the essential spectrum, whose size is explicit and depends on the mass and the interaction strength. We show that the number of discrete eigenvalues in the gap is finite. Under the assumption that one of the domains bounded by $\Sg$ is convex, we prove that the corresponding Dirac operator has a non-empty discrete spectrum, provided that $\tau$ is either sufficiently small or sufficiently large in absolute value. The result holds for any perturbation of a broken line of any opening angle as described above, and this discrete spectrum is induced by the geometry, since for the same type of a singular interaction supported on the straight line the discrete spectrum is empty.
\end{abstract}

\maketitle

\section{Introduction}
Certain geometric deformations of a waveguide-type domain or of the support of a singular interaction can induce discrete eigenvalues for the respective operator. The corresponding discrete spectrum is referred to as geometrically induced and has been studied extensively over the last three decades; see the monograph~\cite{EK15} and the references therein. 

The study of this phenomenon was initiated in~\cite{DE95, ES89}, where the existence of bound states in a bent quantum waveguide was demonstrated; in other words, the existence of the discrete spectrum for the Dirichlet Laplacian on a tubular neighbourhood of a bent curve was proved. The analysis was extended in~\cite{CEK04, DEK01} to quantum layers, that is for Dirichlet Laplacians on tubular neighbourhoods of surfaces in $\dR^3$, where existence of the discrete spectrum was established under certain additional assumptions on the curvatures. 

The effect of geometrically induced bound states manifests also in the setting of singular interactions. The motivation for this stems from the fact that via approximation procedures the existence of geometrically induced bound states can be transferred from the models with singular potentials to Schr\"odinger operators with strongly localized regular potentials \cite{BEHL17, EI01, EK03, H26}. It was shown in~\cite{EI01} that the two-dimensional Schr\"odinger operator with an attractive $\delta$-interaction supported on an asymptotically straight and sufficiently regular bent curve in the plane has discrete spectrum below the bottom of the essential spectrum. This result was extended in~\cite{EK03} to surfaces in $\dR^3$ under the assumption that the interaction is sufficiently strong.  While the general result in three dimensions is missing, it was demonstrated in~\cite{EKL18} that $\dl$-interactions supported on a sufficiently small local deformation of the plane in $\dR^3$ induce exactly one bound state below the bottom of the essential spectrum and its asymptotics in the small deformation limit was found. Schr\"odinger operators with $\dl$-interactions supported on certain non-local deformations of the plane such as unbounded conical surfaces can have even infinite discrete spectrum~\cite{BEL14, LO16, OP18}. In recent years, the existence of geometrically induced bound states for soft quantum waveguides, which are modelled by Schr\"odinger operators in the full Euclidean space and potentials that are supported in a neighborhood of a reference curve or surface, was investigated in \cite{E20, E22, E22-2, EKL24, ES24, KKK21}.

The understanding of geometrically induced discrete spectrum for Dirac operators is much less complete than for Schr\"odinger operators. Dirac operators are used to describe particles with spin $\frac{1}{2}$ taking effects of the special theory of relativity into account \cite{T92} and in dimension two they play a role in the mathematical study of graphene \cite{NGM04}. The analysis of the discrete spectrum of Dirac operators is challenging, as the essential spectrum consists of two unbounded intervals, making the operator unbounded from above and from below. Existence of discrete spectrum in the gap of the essential spectrum was addressed for two-dimensional Dirac waveguides with infinite mass~\cite{BBKO22} and zig-zag~\cite{EH22} boundary conditions. More recently, the same question was  studied for Dirac operators with singular interactions supported on curves. The present paper
also addresses the latter setting and significantly extends the admissible geometric configurations for which the existence of the discrete spectrum in the gap of the essential spectrum is proved.

Let us discuss in more detail the results of the present paper and also the preceding results on the existence of discrete eigenvalues for Dirac operators with singular interactions supported on curves. 
Let $\Sg\subset\dR^2$ be a $C^\infty$-smooth non-compact curve (without self-intersections) which coincides with the broken line 
\begin{equation}\label{eq:broken}
	\Sg_0 = 
	\{(s\sin\omg,s\cos\omg)\colon s > 0\}\cup
	\{(s\sin\omg,-s\cos\omg)\colon s \ge 0\},\qquad \omg\in\left(0,\tfrac{\pi}{2}\right),
\end{equation}
outside a disk of a sufficiently large radius.  The curve $\Sg$ naturally splits the plane into two unbounded domains $\Omg_\pm\subset\dR^2$
with the convention that $\Omg_+$ is the local deformation of the sector with angle less than $\pi$ at the vertex.
By $\nu = (\nu_1,\nu_2)^\top$ we denote the inner unit normal for $\Omg_+$. We consider the operator $\sfD_{\tau,m}$ in $L^2(\dR^2;\dC^2)$ associated with the formal differential expression
\[
	-\ii\s_1\p_1-\ii\s_2\p_2 + m\s_3 + \tau\s_3\dl_\Sg,
\]
where $\s_1,\s_2, \s_3$ are the $2\times 2$ Pauli matrices defined in~\eqref{eq:Pauli}, $m > 0$ is the mass, $\tau\in\dR\sm\{-2,0,2\}$ is the strength of the Lorentz-scalar $\dl$-interaction, and $\dl_\Sg$ is the $\dl$-distribution supported on $\Sg$. The operator $\sfD_{\tau,m}$ is precisely defined by imposing a suitable transmission condition on $\Sg$ as
\begin{equation}\label{eq:Op}
	\begin{aligned}
		\sfD_{\tau,m} u &:= \big(-\ii\s_1\p_1-\ii\s_2\p_2+m\s_3\big)u \quad \text{in } \mathbb{R}^2 \setminus \Sigma,\\
		\dom\sfD_{\tau,m} &:= \Big\{
		u = u_+\oplus u_-\in H^1(\Omg_+;\dC^2)\oplus H^1(\Omg_-;\dC^2)\colon\\
	&\qquad\qquad  \ii \sigma_3 \big(\sigma_1\nu_1+\sigma_2\nu_2\big)  (u_+|_\Sigma - u_-|_\Sigma) = \frac{\tau}{2} (u_+|_\Sigma + u_-|_\Sigma)  \Big\},
	\end{aligned}	
\end{equation}
where we use the notation $u_\pm = u|_{\Omg_\pm}$.
The investigation of Dirac operators with singular interactions
	supported on surfaces was initiated in~\cite{DES89}, where the support of the interaction was a sphere and a separation of variables ansatz was employed.
Dirac operators with singular potentials supported on general compact surfaces in $\mathbb{R}^3$ were first introduced in \cite{AMV14, AMV15} and further studied in, \eg, \cite{BEHL18,BEHL19, HOP18}, and two-dimensional Dirac operators with
singular potentials supported on smooth compact curves were introduced in \cite{BHOBP, CLMT23}; see also the review paper \cite{BHSS24} for a unified treatment of the two- and three-dimensional case and  further references. Eventually, Dirac operators with singular interactions supported on unbounded curves and surfaces in $\mathbb{R}^2$ and $\mathbb{R}^3$ were investigated in \cite{BEHT25, B22,FHL24, FL23,  R21, R22a}.
In particular, it is known that
the operator $\sfD_{\tau,m}$ is self-adjoint in $L^2(\dR^2;\dC^2)$; see~\cite{BEHT25}.
Moreover, $\sfD_{\tau,m}$ arises as the norm resolvent limit of Dirac operators with scaled regular Lorentz-scalar potentials localized in the vicinity of $\Sg$; \cf~\cite{BHS25, BHS26, CLMT23, MP18} for details on this approximation problem. In particular, these approximation results allow to transfer the main results obtained below for Dirac operators with singular potentials to Dirac operators with regular potentials in a similar way as in \cite[Section~4]{H26}.

For technical reasons we do not treat the confinement case $\tau = \pm 2$, because the operator $\sfD_{\pm 2,m}$ decouples into an orthogonal sum of two Dirac operators on $\Omg_-$ and $\Omg_+$, respectively, with infinite mass boundary conditions~\cite[Proposition~2.2]{BEHT25}; however, in the confinement case no geometrically induced eigenvalues exist, see Remark~\ref{remark_confinement} for details. Moreover, we also exclude $\tau=0$, which corresponds to the free Dirac operator and hence has no discrete eigenvalues.
For $\tau\in(0,\infty)\sm\{2\}$, we have $\sess(\sfD_{\tau,m}) = (-\infty,-m]\cup[m,\infty)$ and the discrete spectrum of $\sfD_{\tau, m}$ is empty; see Proposition~\ref{essentials}\,(i) and (iv) below.
For $\tau\in(-\infty,0)\sm\{-2\}$,
it was shown in~\cite{BEHT25} that the essential spectrum
is given by
\begin{equation}\label{eq:ess}
\sess(\sfD_{\tau,m}) = \left(-\infty,-m\left|\tfrac{\tau^2-4}{\tau^2+4}\right|\right]\cup
\left[m\left|\tfrac{\tau^2-4}{\tau^2+4}\right|,+\infty\right).
\end{equation}
The question of the presence of discrete eigenvalues in the gap of the essential spectrum is subtle. When the interaction support is the straight line, there is no discrete spectrum in the gap~\cite{BHT23}. It was demonstrated in~\cite{FL23} that after dropping the $C^\infty$-smoothness assumption on $\Sg$ and considering the case when $\Sg$ coincides with the broken line $\Sg_0$, the
operator $\sfD_{\tau,m}$ is only symmetric with deficiency indices $(1,1)$.
According to~\cite{FHL24}, the essential spectrum of any of its self-adjoint extensions still coincides with the set in~\eqref{eq:ess}
while the discrete spectrum of any of such self-adjoint extension is non-empty
for any $\tau\in(-\infty,0)\sm\{-2\}$, provided that the angle at the vertex of the broken line is sufficiently small, where the required smallness of the angle depends on the interaction strength $\tau$. Moreover, the number of discrete eigenvalues exceeds any integer as the angle tends to zero. Precise quantitative estimates on the number of discrete eigenvalues were also obtained in~\cite{FHL24}. The analysis was later extended in~\cite{BEHT25} using the same family of trial functions to local deformations of the broken line with a small angle between the rays.

In the present paper, we address the complementary case, when the angle at the vertex of the underlying broken line is not assumed to be sufficiently small. Instead we impose the assumptions on the coupling constant $\tau$. Our main results are collected in the theorem below.
\begin{maintheorem}\label{thm:main}
	Let $m > 0$ and $\tau\in(-\infty,0)\sm\{-2\}$. Let $\Sg\subset\dR^2$ be a $C^\infty$-smooth curve  which is a local deformation of the broken line $\Sg_0$ in~\eqref{eq:broken}.
	Assume that $\Sg$ splits the plane into domains $\Omg_\pm\subset\dR^2$ with the convention that $\Omg_+$ is a local deformation of the sector whose opening angle at the vertex is less than $\pi$. Let the self-adjoint Dirac operator $\sfD_{\tau,m}$ be as in~\eqref{eq:Op}. Then the following statements hold:		
	\begin{myenum}
		\item The discrete spectrum of $\sfD_{\tau,m}$ is at most finite.
		\item Assume, in addition, that $\Omg_+$ is convex. Then there exists $\tau_\star = \tau_\star(\Sg,m) \in(-2,0)$ such that the
		discrete spectrum of $\sfD_{\tau,m}$ is non-empty
		for all $\tau\in(-\infty,\frac{4}{\tau_\star})\cup(\tau_\star,0)$.
	\end{myenum}
\end{maintheorem}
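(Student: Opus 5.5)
Both parts go through the square of the operator: a quadratic form whose infimum over suitable subspaces controls the gap eigenvalues. The spectrum of $\sfD_{\tau,m}$ is symmetric, via a charge-conjugation type antiunitary $C$ with $C\sfD_{\tau,m}C^{-1} = -\sfD_{\tau,m}$ that preserves the transmission condition. It therefore suffices to count eigenvalues of $\sfD_{\tau,m}^2$ in $[0,M^2)$ with $M := m\left|\tfrac{\tau^2-4}{\tau^2+4}\right|$. For $u\in\dom\sfD_{\tau,m}$ I integrate by parts on $\Omg_\pm$ and use the transmission condition to eliminate one trace. This should give
\[
\|\sfD_{\tau,m}u\|^2 = \|\nabla u_+\|^2+\|\nabla u_-\|^2 + m^2\|u\|^2 + \int_\Sg \big( \kappa\,\text{-terms} + c(\tau,m)\,|P u_+|_\Sg|^2 \big)\dd s,
\]
where $\kappa$ is the signed curvature of $\Sg$, $P$ is a pointwise projection onto an eigenvector of $\ii\s_3(\s\cdot\nu)$, and $c(\tau,m)<0$ for $\tau<0$. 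This is the analogue of the Lorentz-scalar identity on a straight line, where the boundary term produces exactly the threshold $M^2$ (as in \cite{BHT23}). The curvature term is compactly supported, since $\Sg$ is straight outside a disk.

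For (i) I use a Dirichlet--Neumann bracketing / IMS localisation of this form. I split the plane into a large disk $B_R$ and its exterior. The exterior part is further split, by a partition of unity, into neighbourhoods of the two straight half-lines and a region away from $\Sg$. Near each half-line the form is bounded below by $M^2 - C/R^2$, and away from $\Sg$ it is bounded below by $m^2$. The interior contributes only finitely many eigenvalues below $M^2$, because $H^1(B_R\setminus\Sg)$ embeds compactly into $L^2$. The main obstacle is the IMS error $C/R^2$, which could in principle accumulate infinitely many eigenvalues near the threshold. To control it I would use the conical structure at infinity: separate variables in polar coordinates outside $B_R$. The angular operator is a one-dimensional Dirac operator on the circle with two point interactions, and its lowest eigenvalue is at least $M^2 r^2$ up to $O(1)$. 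This yields a radial Hardy-type bound $M^2 + (\mu-\tfrac14)/r^2$ with an effective $\mu$ that I hope to show is at least $\tfrac14$ after accounting for the corner geometry. Failing that, I would adapt the finiteness argument of \cite{FHL24} for the broken line, which handles exactly this asymptotic region.

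For (ii) I use the min-max principle for $\sfD_{\tau,m}^2$: it suffices to find $u\in\dom\sfD_{\tau,m}$ with $\|\sfD_{\tau,m}u\|^2 < M^2\|u\|^2$. For the regime $\tau\to0^-$ (and, via the duality $\tau\mapsto 4/\tau$, which maps the same boundary condition to itself up to a unitary $\s_3$-type transformation swapping $\Omg_\pm$ and sending $M$ to $M$, the regime $\tau\to-\infty$), I take trial functions of the form
\[
u = e^{-\beta\,\mathrm{dist}(x,\Sg)}\,\chi(x)\,v
\]
with a constant spinor $v$ adapted to the projection $P$. The decay rate $\beta=\beta(\tau)$ is chosen to saturate the one-dimensional transverse problem, so that along straight pieces the quotient equals $M^2$ exactly. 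The deviation is then governed by the curvature integral $\int_\Sg\kappa\,\dd s$. Convexity of $\Omg_+$ makes $\kappa$ sign-definite, and the total turning equals $\pi-2\omg>0$, which produces a strictly negative contribution of order $\beta^{-1}$ (small $|\tau|$ means a weakly bound, widely spread state). Meanwhile the cut-off $\chi$ at scale $L\to\infty$ costs only $O(L^{-1})$. I expect this to be the delicate step: the curvature term must beat both the tubular-coordinate errors, which are $O(\beta^{-2})$ relative to the main term, and the cut-off error. This is why only small $|\tau|$ (and, by duality, large $|\tau|$) is claimed; $\tau_\star$ is defined as the threshold where this balance can be guaranteed.
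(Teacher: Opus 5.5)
\textbf{Part (i).} You correctly locate the difficulty: the localisation error near the rays could let eigenvalues accumulate at $M^2$. You do not resolve it, though.

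\emph{Why your localisation fails.} Your second partition, into neighbourhoods of the rays and a region away from $\Sg$, creates an IMS error that is not compactly supported along the rays. It is of size $r^{-2}$ for conical pieces and of fixed size for strips. A bound of the form ``$\ge M^2-C/R^2$'' therefore says nothing about how many eigenvalues lie below $M^2$.

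\emph{Why the polar-coordinate plan is not yet an argument.} In polar coordinates every correction you would have to control sits at the critical order $r^{-2}$ or below, and you prove no sign for any of them. These include the Hardy term $-\frac{1}{4r^2}$, the term from the $r$-dependence of the angular ground states (which concentrate at scale $1/r$ near the rays and are nearly degenerate), and the tunnelling shift. Hoping for ``$\mu\ge\frac14$'' does not settle this.

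\emph{The fallback.} The paper stresses that finiteness for an unbounded interaction curve is proved there for the first time, so there is no finiteness argument in \cite{FHL24} to adapt.

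\emph{The missing idea.} Localise only along the ray, with $\chi_j(x_2/T)$, so that the error $V_T$ lives in the strip $T<x_2<2T$. Keep the \emph{whole} transverse line: on $\cB_T$, extend to the full straight-line interaction, rotate, and diagonalise $\widetilde M$ by $\Theta$. This gives two scalar jump problems, both with transverse ground state energy $E_1=-\frac{16m^2\tau^2}{(\tau^2+4)^2}=M^2-m^2$. Now split $u=\Phi_\mu\psi_{\mu,u}+\mathbb{P}_\mu u$.
\begin{itemize}
\item The orthogonal part has non-negative transverse form, which absorbs $C/T^2$ and the cross term.
\item The $\Phi_\mu$-part reduces to $\int(|\psi'|^2-Z_T|\psi|^2)\dd x_2$, where $\int|x_2|Z_T\dd x_2<\infty$ because $\Phi_\mu$ decays exponentially.
\end{itemize}
Bargmann's bound then gives finiteness.

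\textbf{Part (ii).} Your trial function, as written, is not in the domain. $e^{-\beta\,{\rm dist}(x,\Sg)}\chi v$ with a constant spinor is continuous across $\Sg$. Since $M_\tau-I$ is invertible for $\tau\neq0$ (eigenvalues $-\frac{2\tau}{2+\tau}$ and $\frac{2\tau}{2-\tau}$), the condition $u_-|_\Sg=M_\tau u_+|_\Sg$ then forces $u|_\Sg=0$. So it is not in $\dom\sfD_{\tau,m}$. You need a jump, for example $(1,0)^\top$ in $\Omg_+$ and $M_\tau(s)(1,0)^\top$ in $\Omg_-$ along the normals. Its $s$-dependence costs at most $\frac{|\tau|(\tau^2+4)}{8m(1-\tau^2/4)^2}\int\kp^2\dd s$.

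More seriously, the mechanism you describe is not the one that works. Set $k=\frac{4m|\tau|}{\tau^2+4}$.
\begin{itemize}
\item \emph{Curvature costs rather than gains.} The first-order curvature contributions are the weights $1-\kp t$ on both sides together with $\frac{\kp}{2}(|u_+|^2-|u_-|^2)$. They would cancel exactly for an untruncated profile. Once the $\Omg_+$-profile stops at the cut locus, they equal $\frac{\kp}{2}(1+2kc)e^{-2kc}\ge 0$. So curvature costs at most $\frac12\int\kp\dd s=\omg$. Note also that the total turning is $2\omg$, not $\pi-2\omg$.
\item \emph{The gain comes from the straight ends, which you treat as neutral.} There the $\Omg_+$-profile is cut at the bisector, $c(s)=\frac{h}{\sin\omg}+(s-s_2)\cot\omg$ for $s>s_2$ (similarly for $s<s_1$). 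This lowers the energy per unit length by $ke^{-2kc(s)}$, which integrates to $-\tan\omg\,e^{-2kh/\sin\omg}$.
\end{itemize}
Your ${\rm dist}$-profile contains this effect, but you attribute the gain to the wrong term. The resulting bound is $\omg-\tan\omg\,e^{-2kh/\sin\omg}+\frac{|\tau|(\tau^2+4)}{8m(1-\tau^2/4)^2}\int\kp^2\dd s$. It tends to $\omg-\tan\omg<0$ as $k\to0$, that is, as $\tau\to0^-$ or $\tau\to-\infty$; the $\int\kp^2$ coefficient also vanishes in both limits.

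Convexity does not enter through the sign of a curvature term. It makes the parallel coordinates global in $\Omg_-$ and makes the cut locus in $\Omg_+$ the explicit bisector outside a compact set. That is what makes the computation possible and keeps the trial function $H^1$ across the cut locus.

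Your duality $\tau\mapsto 4/\tau$ is correct in substance. One has $M_{4/\tau}=-M_\tau$, so $u\mapsto\pm u$ on $\Omg_\pm$ is a unitary equivalence; no swap of $\Omg_\pm$ is involved. It explains the shape $(-\infty,\frac{4}{\tau_\star})\cup(\tau_\star,0)$, whereas the paper reads off both limits from the same estimate.
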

Items (i) and (ii) of Theorem~\ref{thm:main} are restated in Theorems~\ref{finite prop} and~\ref{thm:disc}, respectively.
Both items are proved by applying the min-max principle to the quadratic form associated with the square of the Dirac operator $\sfD_{\tau,m}$; \cf Proposition~\ref{essentials}. Similar formulas for the quadratic form were also derived and played a key role in \cite{ALTR17, FHL24, HOP18, LO18}.

The proof of finiteness of the discrete spectrum relies on splitting the plane into subdomains using Neumann bracketing and the IMS formula. 
This yields essentially a ''partition'' of the operator into operators acting on domains which tile $\RR^2$. After the splitting and a sequence of transforms the finiteness of the discrete spectrum of $\sfD_{\tau,m}$ follows from a Bargmann-type estimate for the one-dimensional Schr\"odinger operator, whose potential depends on $\tau,m$ and the curve $\Sg$.  A similar strategy was successfully employed to show the finiteness of the discrete spectrum, \eg, of 
the magnetic Neumann Laplacian on a half-space~\cite{MT}, of the Dirichlet Laplacian on the broken waveguide~\cite{DLR12}, and of the Robin Laplacian acting on a sector~\cite{khalile}.
We remark that it is the first time that the finiteness of the discrete spectrum of a Dirac operator with  a singular potential supported on an unbounded curve is proved.

The proof of the existence of the discrete spectrum relies on the construction of a suitable trial function using parallel coordinates naturally associated with the curve $\Sg$; \cf Subsection~\ref{section_geometry} for a summary of the necessary geometric notions and related references. Convexity of $\Omg_+$ ensures that the cut-locus in the domain $\Omg_-$ is empty, which is essential in the construction. On the other hand, the cut-locus in $\Omg_+$ is explicit outside a disk of a sufficiently large radius and coincides with the bisector line of $\Sg_0$, which is also used in the construction of the trial function. This construction is largely inspired by the proof of existence of bound states for soft quantum waveguides and layers~\cite{KKK21, KK24}.

\subsection*{Structure of the paper}
Section~\ref{sec:prelim} consists of two subsections and contains preliminary facts on geometry and the definition of the Dirac operator.
In Subsection~\ref{section_geometry} we recall the construction of parallel coordinates in the plane associated with the curve $\Sg$. In Subsection~\ref{ssec:operator}
we define the Dirac operator with Lorentz-scalar $\dl$-shell interaction supported on $\Sg$ and list its basic properties. In particular, we find in this subsection the quadratic form for the square of $\sfD_{\tau,m}$. 
The proof of the finiteness of the discrete spectrum in the gap is provided in Section~\ref{sec:finiteness}. The proof of existence of discrete spectrum in the gap under suitable assumptions on $\Sg$ and $\tau$ is presented in Section~\ref{sec:existence}.
\subsection*{Notations}
The spectrum and the essential spectrum of a self-adjoint operator $\sfT$ in a Hilbert space $\cH$ are denoted by $\s(\sfT)$ and $\sess(\sfT)$, respectively.

For an infinite-dimensional Hilbert space $\mathcal{H}$, $n\in\NN$, and a closed, densely defined, semibounded symmetric sesquilinear form $\frt$ with domain $\dom\frt\subset\mathcal{H}$, let
\begin{equation} \label{min_max_form}
	\lambda_n(\frt):=\inf_{\substack{V\subset\dom(\frt)\\ \mathrm{dim}V=n}}\sup_{\substack{u\in V\\u\neq 0}}\frac{\frt[u]}{\|u\|_\mathcal{H}^2}.
\end{equation}
Moreover, we set for $\lambda\in\RR$
\begin{equation} \label{eigenvalue_counting_form}
	N(\frt,\lambda):=\#\{n\in\NN\colon \lambda_n(\frt) < \lambda\}.
\end{equation}
Whenever $N(\frt,\lambda)$ is finite, it coincides with the number of eigenvalues (taking multiplicities into account) of the operator associated with $\frt$ that are smaller than $\lambda$.

Finally, for an open set $\Omega \subset \mathbb{R}^2$ Sobolev spaces of order $s \in \mathbb{R}$ of $\mathbb{C}^d$-valued functions, $d \in \mathbb{N}$, are denoted by $H^s(\Omega; \mathbb{C}^d)$; \cf \cite{M00}. Likewise, if $\Sigma$ as above is equipped with the Hausdorff measure, that is denoted by $\mathcal{H}$, then Sobolev spaces of order $s \in \mathbb{R}$ of $\mathbb{C}^d$-valued functions, $d \in \mathbb{N}$, are denoted by $H^s(\Sigma; \mathbb{C}^d)$.

\section{Preliminaries}
\label{sec:prelim}

\subsection{Geometric preliminaries} \label{section_geometry}
In this subsection, we introduce a class of unbounded smooth curves in the plane that are local deformations of a broken line and serve as supports of the $\delta$-shell interactions considered below. Under the additional assumption of convexity of one of the domains bounded by such a curve, we will define the parallel coordinates in the plane naturally associated with such a curve and recall some of their known properties, which will be employed in our subsequent analysis of the Dirac operator with a singular interaction.

Let $\Sigma\subset\dR^2$ be a $C^\infty$-smooth curve (without self-intersections), which coincides with a broken line outside a disk of a sufficiently large radius; here by broken line we understand two half-lines emerging from a single point
and constituting an angle of magnitude less than $\pi$. In other words, the curve $\Sigma$ can be viewed as a 'local deformation' of such a broken line.

The curve $\Sg$ splits the plane into two domains $\Omg_+\subset\dR^2$ and $\Omg_- := \dR^2\sm\ov{\Omg_+}$ both naturally being local deformations of sectors. We choose
the convention that $\Omg_+$ is a local deformation of the sector with an angle of magnitude less than $ \pi$. By $\nu = (\nu_1,\nu_2)^\top$, we denote the unit normal vector to the curve $\Sg$ pointing inwards of the domain $\Omg_+$, and by $\gamma=(\gamma_1,\gamma_2)^\top$ we denote an arc-length parametrization of $\Sg$ (\ie $|\gamma'| =1$) which is oriented such that $\nu = (\gamma_2', -\gamma_1')^\top$. It will be convenient for our analysis to specify that $\Sg$ is a local deformation of the fixed broken line
\begin{equation}\label{eq:Sigma0}
	\Sg_0 = 
	\{(s\sin\omg,s\cos\omg)\colon s > 0\}\cup
	\{(s\sin\omg,-s\cos\omg)\colon s \ge 0\},\qquad \omg\in\left(0,\tfrac{\pi}{2}\right),
\end{equation}
and to fix the parametrization of the curve $\Sg$ in such a way
that
$\lim_{s\arr\pm\infty}\gamma_2(s) = \pm \infty$ or, in other words, the parametrization of $\Sg$ is in the clockwise direction. We remark that the opening angle at the vertex of $\Sg_0$ has magnitude $\pi -2\omg$. 
We define the signed curvature $\kp\colon\dR\arr\dR$ of $\Sg$ through the identity
\begin{equation}\label{eq:curvature}
\kp(s) = \gamma_1''(s)\gamma_2'(s)-\gamma_2''(s)\gamma_1'(s).
\end{equation}
With this convention for the curvature in hands, we observe that the Frenet formula 
\begin{equation} \label{Frenet}
  \gamma''(s) = \kp(s)\nu(s)
\end{equation}
holds, that the total curvature is given by
\begin{equation*}
	\int_\Sg\kp\dd \mathcal{H} = 2\omg
\end{equation*}
and that $\kp\ge 0$ if and only if, $\Omg_+$ is convex.
Throughout the paper we will mainly focus on a special subclass of such curves for which the domain $\Omg_+$ is convex. Let us collect all our assumptions on the curve $\Sg$ in a hypothesis. In the following $\cB_R\subset\dR^2$ denotes the disk of radius $R > 0$ centered at the origin.

\begin{hyp}\label{hyp}
	Let  $\Sg\subset\dR^2$ be an unbounded $C^\infty$-smooth curve
	without self-intersections
	which is a local deformation of the broken line $\Sg_0$ defined in~\eqref{eq:Sigma0}; \ie there exists $R >0$ such that $\Sg\sm\cB_R = \Sg_0\sm\cB_R$. 
	
	Let the curve $\Sg$ be parametrized by the arc-length mapping $\gamma\colon\dR\arr\dR^2$ in the clockwise direction and assume that the curve $\Sg$ is such that the  associated signed curvature $\kp$ in~\eqref{eq:curvature} is non-negative or, equivalently, the domain $\Omg_+$ is convex. 
\end{hyp}

\begin{center}
\begin{figure}[h]
\includegraphics[width=4cm, keepaspectratio]{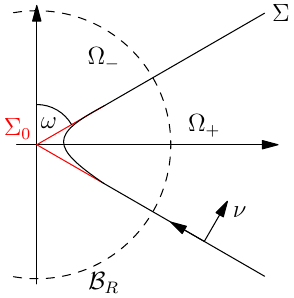}
	\caption{A typical example of our geometric setting: The smooth curve $\Sigma$ coincides with the broken line $\Sigma_0$ (plotted in red) outside of a disk $\cB_R\subset\dR^2$. The curve $\Sigma$ splits $\mathbb{R}^2$ into two domains $\Omega_+$ and $\Omega_-$, where  $\Omg_+$ is a local deformation of the sector with an angle of magnitude less than $ \pi$. The normal vector $\nu$ points inwards of $\Omega_+$, the orientation of the arc-length parametrization $\gamma$ is indicated by an arrow.}
	\end{figure}
\end{center}
Our next goal is to associate parallel coordinates to the curve $\Sg$ satisfying Hypothesis~\ref{hyp}.
The theory of parallel coordinates associated with a curve in the Euclidean space or a manifold is developed in~\cite{H64}.
The reader may consult the monograph~\cite{SST03}
for a modern exposition of the theory.
Below we follow the lines and use the notation of~\cite[Appendix 1]{S01} (see also~\cite[\S 2]{KK24} for the extension to a class of non-compact boundaries). We consider the normal exponential map
\begin{equation}\label{eq:Phi}
\Phi\colon\dR^2\arr\dR^2,\qquad \Phi(s,t) := \gamma(s) + t\nu(s).
\end{equation}
Note also that, by the Frenet formula~\eqref{Frenet}, the Jacobian $\sfJ_\Phi$ of $\Phi$ is given by 
\begin{equation}\label{eq:J}
	\sfJ_\Phi(s,t) = 
	\begin{pmatrix}
	\gamma_1'(s) -t\kappa(s)\gamma'_1(s) & \gamma_2'(s)\\
	\gamma_2'(s) - t\kappa(s)\gamma_2'(s) & -\gamma_1'(s)
	\end{pmatrix}
\end{equation}
and thus its determinant can be computed as
\begin{equation}\label{eq:detJacobian}
\det\sfJ_\Phi(s,t) = -1+\kp(s)t,
\end{equation}
where we used that $|\gamma'| = 1$.
Our aim is to construct parallel coordinates $(s,t)$ based on $\Sg$.
For a curve $\Sg\subset\dR^2$ as in Hypothesis~\ref{hyp} there exists an $a > 0$, such that the restriction $\Phi|_{\R\times(-\infty,a)}$ is a diffeomorphism, where the left endpoint in $(-\infty,a)$ can be chosen to be $-\infty$ by the Hadamard-Caccioppoli theorem, because $\Phi$ is proper and $\det\sfJ_\Phi(s,t)\ne 0$ for all $s\in\dR$ and $t < 0$ due to the non-negativity of $\kp$.
This property of the mapping provides us with natural coordinates on the set $\Phi(\R\times (-\infty,a))$. Our aim is to extend these coordinates to the whole plane up to a certain set of Lebesgue measure zero. To this aim we define the cut-radius map $c\colon\dR\arr \dR_+$ by the property that the segment $\{\Phi(s,t')\colon t'\in(0,t)\}$ is the unique minimizer of the distance from $\Sg$ to $\Phi(s,t)$ if and only if $t\in [0,c(s))$. Adapting the argument of~\cite[Proposition 4.2.1]{SST03} we conclude that the cut-radius map $c$ is a continuous function.
The cut-locus
\[
{\rm Cut}\,(\Sg) := \{\Phi(s,c(s))\colon s\in\dR\}
\]
is a closed subset of $\Omg_+$ of Lebesgue measure zero. 
For any $s\in\dR$, the distance between $\Phi(s,c(s))$ and $\Sg$ is equal to $c(s)$ by construction of the cut-radius map. Thus, we conclude that
\begin{equation} \label{symmetry_cut_locus}
 \Phi(s_1,c(s_1))  =\Phi(s_2,c(s_2)) \in {\rm Cut}\,(\Sg) \quad \Rightarrow \quad c(s_1)=c(s_2).
\end{equation}
We recall that the restriction of the map $\Phi$ to the set
\begin{equation}\label{eq:U}
U := \{(s,t)\in\dR^2\colon -\infty < t < c(s)\}
\end{equation}
is a diffeomorphism onto $\Phi(U) = \dR^2\sm{\rm Cut}\,(\Sg)$ (see~\cite[Appendix 1]{S01}).
It also follows from~\cite[Lemma~4.2.3]{SST03} that
\begin{equation}\label{eq:boundc(s)}
	\kp(s)c(s) \le 1,\qquad \text{for all}\,\,s\in\dR.
\end{equation}
It will be convenient for the subsequent analysis to clarify
the values of $c(s)$ outside a compact subset of $\dR$
for the curve $\Sg$ satisfying Hypothesis~\ref{hyp}. This will be performed in the following proposition, whose proof is postponed to Appendix~\ref{app}.
\begin{prop}\label{assumption}
Let $\Sg\subset\dR^2$ be a curve as in Hypothesis~\ref{hyp} parametrized by the arc-length mapping $\gamma\colon\dR\arr\dR^2$ as above.
Then there exist $s_1,s_2\in\dR$, $s_1 < s_2$, such that
the following conditions hold:
\begin{myenum}
	\item $\supp\kp\subset [s_1,s_2]$;
	\item $h := -\gamma_2(s_1) = \gamma_2(s_2) > 0$; 
	\item The cut-radius map in $\dR\sm[s_1,s_2]$ is explicitly given by
	\begin{equation*}
	\begin{aligned}
		c(s) = \frac{h}{\sin\omg} + (s-s_2)\cot\omg,&\qquad s > s_2,\\
		c(s) = \frac{h}{\sin\omg} + (s_1-s)\cot\omg, &\qquad s < s_1.
	\end{aligned}
\end{equation*}
	In particular, outside a disk of a sufficiently large radius the cut-locus ${\rm Cut}\,(\Sg)$ coincides with the bisector ray of the broken line $\Sg_0$.
\end{myenum}
\end{prop}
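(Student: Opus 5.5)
Fix $R>0$ from Hypothesis~\ref{hyp}, so that $\Sg\sm\cB_R=\Sg_0\sm\cB_R$. The upper ray of $\Sg_0$ is $\{(s\sin\omg,s\cos\omg)\colon s>0\}$, whose points have $\gamma_2>0$, and the lower ray has $\gamma_2<0$. Since $\lim_{s\to\pm\infty}\gamma_2(s)=\pm\infty$, I will choose $s_2$ so large and $s_1$ so negative that $\gamma((-\infty,s_1])$ and $\gamma([s_2,\infty))$ lie on the lower and upper rays of $\Sg_0$ outside $\cB_R$. On these pieces $\gamma$ is affine, so $\kp=0$ there, which gives (i). Moving along a ray changes $|\gamma_2|$ by $\cos\omg$ times the arc length, so $|\gamma_2|$ is strictly increasing along each ray towards infinity. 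I can therefore enlarge $s_2$, or decrease $s_1$, until $-\gamma_2(s_1)=\gamma_2(s_2)=:h$, with $h$ larger than any prescribed constant; this gives (ii). By the clockwise orientation, $\gamma'(s)=(\sin\omg,\cos\omg)$ for $s>s_2$ and $\gamma'(s)=(-\sin\omg,\cos\omg)$ for $s<s_1$. Then $\nu=(\gamma_2',-\gamma_1')$ equals $(\cos\omg,-\sin\omg)$ on the upper ray and $(\cos\omg,\sin\omg)$ on the lower ray, both pointing into $\Omg_+$, which contains the positive $x_1$-axis far out.

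For (iii), by the symmetry of the broken line it suffices to treat $s>s_2$. Set $\ell:=s-s_2+h/\cos\omg$, the distance along the ray from the vertex, so that $\gamma(s)=\ell(\sin\omg,\cos\omg)$. The normal segment $t\mapsto\gamma(s)+t\nu$ reaches the bisector $\{x_2=0\}$ at $t=\ell\cot\omg$. Using $\ell\cot\omg=(s-s_2)\cot\omg+h/\sin\omg$, this is exactly the claimed value of $c(s)$.

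The argument then needs two inequalities.

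First, for $t<\ell\cot\omg$ the segment is the unique minimizer, so $c(s)\ge\ell\cot\omg$. The point $p=\gamma(s)+t\nu$ lies in the upper half-plane, at distance $t$ from the upper ray. I must show $\operatorname{dist}(p,\Sg)>t$ except at $\gamma(s)$. Since $\Omg_+$ is convex, the distance from an interior point to $\Sg=\p\Omg_+$ equals the infimum over supporting lines. Near $\gamma(s)$ the curve is the ray itself, and the point $\gamma(s)$ is a strict local minimizer. Points of $\Sg$ in the lower half-plane, or inside $\cB_R$, are at distance greater than $t$ provided $h$ is large. For these I will compare with the lower half-plane: any path from $p$ to a point with $x_2\le0$ has length at least $p_2$. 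Here $p_2$ is greater than $t$ exactly when $t<\ell\cot\omg$, because $p_2=\ell\cos\omg-t\sin\omg$ and $t=p_2$ at the bisector crossing: solving gives $t=\ell\cos\omg/(1+\sin\omg)<\ell\cot\omg$. This crude bound is not sufficient, so I will instead use that within the upper half-plane, $\Sg\cap\{x_2>0\}$ lies on the closed side of the ray's line away from $\Omg_+$, by convexity. The closest point of that half-plane to $p$ is the foot $\gamma(s)$, which gives strict uniqueness against all of $\Sg\cap\{x_2\ge0\}$. For the points with $x_2<0$ I use the mirror argument: $p$ is strictly closer to the upper supporting line than to the lower one as long as $p_2>0$.

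Second, at $t=\ell\cot\omg$ the point lies on the bisector. By the reflection symmetry of the two rays there is a second minimizer, namely the reflected foot on the lower ray, provided that foot lies on the straight part of $\Sg$. This holds because $\gamma(s_1)$ is the mirror image of $\gamma(s_2)$, by the choice of equal $h$. Hence $c(s)\le\ell\cot\omg$, and (iii) follows. The last claim of the proposition follows because ${\rm Cut}\,(\Sg)\ni\Phi(s,c(s))$ runs along the bisector for $|s|$ large.

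The main obstacle is the first inequality: ruling out other points of $\Sg$ in the compact deformed part as closer minimizers. I plan to handle it via the supporting-line and convexity argument above, together with choosing $h$ large relative to $R$.
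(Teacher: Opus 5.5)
\textbf{There is a gap in the lower bound $c(s)\ge\ell\cot\omg$, at exactly the step you flag as the main obstacle.}

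Several parts of your proposal are fine and agree with the paper: the choice of $s_1,s_2$, items (i)--(ii), the identity $\ell\cot\omg=\frac{h}{\sin\omg}+(s-s_2)\cot\omg$, and the upper bound $c(s)\le\ell\cot\omg$ via the reflected foot on the lower ray.

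The problem is that the geometric fact you rely on is false. Put $\nu_+=(\cos\omg,-\sin\omg)^\top$ and let $L_+$ be the line of the upper ray. Then $L_+$ is a supporting line of the convex set $\ov{\Omg_+}$, so $\Sg\subset\ov{\Omg_+}\subset\{x\colon x\cdot\nu_+\ge 0\}$. In particular the deformed arc lies on the \emph{same} side of $L_+$ as $\Omg_+$ and as $p=\gamma(s)+t\nu_+$, not on the opposite side. So the nearest-point property of that half-plane tells you nothing: the half-plane contains $p$.

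Your mirror argument for $x_2<0$ has the same defect. Comparing ${\rm dist}(p,L_-)$ with ${\rm dist}(p,L_+)=t$ only controls points of the straight lower ray. The part of the curved arc with $x_2<0$ also lies on $p$'s side of $L_-$.

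The supporting-line formula for the distance to the boundary of a convex set is correct. However, using it requires lower bounds on ${\rm dist}(p,L')$ for the tangent lines $L'$ along the curved part, and that is precisely the missing estimate. As written, nothing excludes a point of the deformed part of $\Sg$ from lying within distance $t$ of $p$, and this exclusion is the only nontrivial content of (iii).

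\textbf{How the paper handles this step.}
\begin{itemize}
\item[] It first picks $s_1'<s_2'$ with $\supp\kp\subset[s_1',s_2']$ and $-\gamma_2(s_1')=\gamma_2(s_2')=h'$.
\item[] It then sets $s_2=s_2'+h'$ and $s_1=s_1'-h'$, so that $h=h'(1+\cos\omg)$.
\item[] By convexity, the arc $\gamma([s_1',s_2'])$ lies in $\{x_1\le\gamma_1(s_2')\}$, i.e.\ behind the vertical chord line $\Gamma$ through its endpoints.
\item[] One has ${\rm dist}(p,\Gamma)=(s-s_2')\sin\omg+t\cos\omg$, which exceeds $t$ for every $t<\ell\cot\omg$. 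The offset $h'$ is exactly what makes this hold at $s=s_2$.
\end{itemize}

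\textbf{A repair within your own plan of taking $h$ large.} A cruder bound than the one you tried already suffices.
\begin{itemize}
\item[] Since $|p|^2=\ell^2+t^2$, every $q\in\Sg\cap\ov{\cB_R}$ satisfies $|p-q|\ge\sqrt{\ell^2+t^2}-R$.
\item[] This exceeds $t$ whenever $\ell^2>2tR+R^2$.
\item[] That inequality holds for all $t<\ell\cot\omg$ once $\ell\ge R\cot(\omg/2)$, which is guaranteed by taking $h\ge R\cos\omg\cot(\omg/2)$.
\item[] All remaining points of $\Sg$ lie on the two rays. There ${\rm dist}(p,L_+)=t$ with unique foot $\gamma(s)$, and ${\rm dist}(p,L_-)=t+2p_2\sin\omg>t$.
\end{itemize}
This route does not use convexity at this step, but it yields a larger $h$. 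The paper's calibrated choice keeps $h=h'(1+\cos\omg)$, and that value is what enters the explicit bound on $h$ in the paper's later example.
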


\subsection{The Dirac operator with Lorentz scalar $\dl$-shell interaction}
\label{ssec:operator}

In the following, let $\Sigma$ be an unbounded $C^\infty$-curve as described at the beginning of Subsection~\ref{section_geometry};  it is not required that $\Sigma$ is a local perturbation of the broken line in~\eqref{eq:Sigma0} (but it should be a local perturbation of some broken line) and $\Sigma$ also does not have to satisfy Hypothesis~\ref{hyp}.
The aim of this subsection is to define the Dirac operator with a Lorentz-scalar $\dl$-shell interaction supported on $\Sg$ and to collect its basic properties. For this, we use a similar representation as in \cite{FHL24}, see also \cite{BEHT25}, where such operators have also been studied.

Let us recall the definition of the Hermitian $2\times 2$ Pauli matrices
\begin{equation}\label{eq:Pauli}
		\s_1 = \begin{pmatrix} 0 & 1 \\ 1 &0\end{pmatrix},\qquad
	\s_2 = \begin{pmatrix} 0 & -\ii\\ \ii &0\end{pmatrix},\qquad \s_3 = \begin{pmatrix} 1 & 0\\0&-1\end{pmatrix}.
\end{equation}
These matrices satisfy for all $i,j\in\{1,2,3\}$ the anti-commutation relation
$\s_i\s_j + \s_j\s_i = 2\dl_{ij} \sigma_0$, where $\dl_{ij}$ is the standard Kronecker symbol and $\sigma_0$ denotes the $2\times 2$-identity matrix.
In the following, we use for $x = (x_1, x_2)^\top \in \mathbb{C}^2$ the notation $\sigma \cdot x:=x_1\sigma_1+x_2\sigma_2$ and likewise $\sigma \cdot \nabla = \sigma_1 \partial_1 + \sigma_2 \partial_2$.
In order to define the operator, we introduce the mass $m > 0$
and the strength of the Lorentz-scalar interaction $\tau \in \mathbb{R} \setminus \{ -2, 0, 2 \}$. Consider the Dirac differential expression in $\dR^2$
\[
	\cD_m :=-\ii (\sigma\cdot\nabla)+m\s_3 \qquad\text{on}\,\,\dR^2.
\]
Next, we define for $\tau \neq \pm 2$ the matrix valued function $M_\tau\colon\Sigma\arr\dC^{2\times2}$ given by
\begin{equation} \label{def_M}
	M_\tau = \frac{1}{1-\frac{1}{4}\tau^2}\begin{pmatrix}
	1+\frac14\tau^2 & -\ii(\nu_1-\ii\nu_2)\tau\\
	\ii(\nu_1+\ii\nu_2)\tau & 1+\frac14\tau^2
	\end{pmatrix} = \frac{4 + \tau^2}{4 - \tau^2} \sigma_0 - \frac{4 \tau}{4 - \tau^2} \ii \sigma_3 (\sigma \cdot \nu).
\end{equation}
Note that $M_\tau$ has different signs in the anti-diagonal entries as, e.g., in \cite{FHL24} due to the fact that $\nu$ points inwards of $\Omega_+$. The anti-commutation relations for the Pauli matrices imply that $(\ii \sigma_3 (\sigma \cdot \nu))^2 = \sigma_0$ and thus, $M_\tau$ is pointwise invertible and the inverse is given by
\begin{equation} \label{M_inv}
  M_\tau^{-1}  = \frac{4 + \tau^2}{4 - \tau^2} \sigma_0 + \frac{4 \tau}{4 - \tau^2} \ii \sigma_3 (\sigma \cdot \nu).
\end{equation}
Now we are ready to introduce the Dirac operator in $L^2(\dR^2;\dC^2)$ with Lorentz scalar $\dl$-shell interaction supported on $\Sg$ of strength $\tau \in \mathbb{R} \setminus \{ -2, 0, 2\}$ via
\begin{equation}\label{eq:DiracOperator}
\begin{aligned}
	\sfD_{\tau,m} u &:= \cD_m u \quad \text{in } \mathbb{R}^2 \setminus \Sigma,\\
	\dom\sfD_{\tau,m} &:= \Big\{
	u = u_+\oplus u_-\in H^1(\Omg_+;\dC^2)\oplus H^1(\Omg_-;\dC^2)\colon u_-|_\Sg = M_\tau u_+|_\Sg\Big\}.
\end{aligned}	
\end{equation}
Note that the transmission conditions for $u \in \dom\sfD_{\tau,m}$ are equivalent to
\begin{equation} \label{transmission_conditions}
  \ii \sigma_3 (\sigma \cdot \nu ) (u_+|_\Sigma - u_-|_\Sigma) = \frac{\tau}{2} (u_+|_\Sigma + u_-|_\Sigma). 
\end{equation}
In the next proposition, we collect properties of the operator
$\sfD_{\tau,m}$ that are essential for our considerations.
\begin{prop}\label{essentials}
  Let $\tau \in \mathbb{R} \setminus \{ -2, 0, 2 \}$ and $m > 0$. Then the following statements hold:
\begin{myenum}
	\item The operator $\sfD_{\tau,m}$ is self-adjoint in $L^2(\dR^2;\dC^2)$.
	\item The essential spectrum of $\sfD_{\tau,m}$ is
	\begin{equation*}
	  \sess(\sfD_{\tau,m}) = \begin{cases} \left(-\infty,-m\left|\frac{\tau^2-4}{\tau^2+4}\right|\right]\cup
	\left[m\left|\frac{\tau^2-4}{\tau^2+4}\right|,+\infty\right), & \tau \in (-\infty, 0) \setminus \{-2\}, \\ (-\infty, -m] \cup [m, \infty),& \tau \in (0, \infty) \setminus \{2\}. \end{cases} 
    \end{equation*}
	\item For any $u\in \dom\sfD_{\tau,m}$ one has
	\begin{equation} \label{equation_form_square}
	  \begin{aligned}
		\frd_{\tau,m}[u]  :&= \|\sfD_{\tau,m}u\|^2_{L^2(\dR^2;\dC^2)}\\
		&= \int_{\dR^2\setminus\Sigma}\!\!|\nb u|^2\dd x + \frac{2m}{\tau}\int_\Sg|u_+ - u_-|^2\dd \mathcal{H}
		+\int_\Sg\frac{\kp}{2}\big(|u_+|^2-|u_-|^2\big)\dd \mathcal{H}
		+ m^2\int_{\dR^2}|u|^2\dd x.
	  \end{aligned}
	\end{equation}
	\item For $\tau \in (0, \infty) \setminus \{ 2 \}$ the discrete spectrum of $\sfD_{\tau,m}$ is empty.
	\item $\lambda \in \sigma(\sfD_{\tau,m})$ if and only if $-\lambda \in \sigma(\sfD_{\tau,m})$ and, moreover, for any $\lambda \in \mathbb{R}$ one has
	\[
	\dim\ker(\sfD_{\tau,m}-\lm) = \dim\ker(\sfD_{\tau,m}+\lm).
	\]
\end{myenum}	
\end{prop}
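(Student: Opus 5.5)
Most items of Proposition~\ref{essentials} are known or follow from a direct computation, so I would take them in turn. For (i) and (ii) I would rely on \cite{BEHT25}, where self-adjointness of $\sfD_{\tau,m}$ and the essential spectrum for $\tau<0$ are established for local deformations of broken lines. For $\tau>0$ the same localization argument applies. Via Neumann bracketing and the IMS formula on the quadratic form \eqref{equation_form_square}, the essential spectrum of $\sfD_{\tau,m}^2$ is determined by the asymptotic straight half-lines. For a straight line with $\tau>0$, the form $\frd_{\tau,m}$ satisfies $\frd_{\tau,m}[u]\ge m^2\|u\|^2$, since $\kp=0$ and $\frac{2m}{\tau}>0$. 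The value $m^2$ is attained asymptotically by functions that are smooth across the line and spread out. Hence $\sess(\sfD^2_{\tau,m})=[m^2,\infty)$. Combined with the symmetry in (v), this gives $\sess(\sfD_{\tau,m})=(-\infty,-m]\cup[m,\infty)$.

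The core of the proposition is (iii), which I would prove by integration by parts. For $u\in\dom\sfD_{\tau,m}$, I would expand $|\cD_m u_\pm|^2$ on $\Omg_\pm$. The anticommutation relations give pointwise
\[
|(-\ii\sigma\cdot\nabla+m\s_3)u|^2=|\sigma\cdot\nabla u|^2+m^2|u|^2+2m\,\Re\langle -\ii\sigma\cdot\nabla u,\s_3u\rangle .
\]
The mixed term is a divergence, because $\s_3$ anticommutes with $\sigma_j$; it therefore yields the boundary term $m\int_\Sg\big(\langle \ii\s_3(\sigma\cdot\nu)u_+,u_+\rangle-\langle \ii\s_3(\sigma\cdot\nu)u_-,u_-\rangle\big)\dd\mathcal H$, with signs coming from the normal orientation. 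Next, $\int|\sigma\cdot\nabla u|^2=\int|\nabla u|^2$ plus a boundary term, which in dimension two is the standard curvature term $\frac{\kp}{2}|u|^2$ together with a tangential-derivative term. This identity is used in \cite{HOP18, FHL24}; I would verify it first for smooth compactly supported $u_\pm$ and then argue by density in $H^1$. The tangential-derivative contributions from both sides cancel because of the transmission condition $u_-=M_\tau u_+$. This holds since $M_\tau$ is a combination of $\sigma_0$ and $\ii\s_3(\sigma\cdot\nu)$, which commute with the relevant tangential structure up to the curvature terms already accounted for. For the mass boundary term, write $P:=\ii\s_3(\sigma\cdot\nu)$, a Hermitian involution. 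Using \eqref{transmission_conditions}, $P(u_+-u_-)=\frac{\tau}{2}(u_++u_-)$, one shows $\langle Pu_+,u_+\rangle-\langle Pu_-,u_-\rangle=\frac{2}{\tau}|u_+-u_-|^2$ by decomposing into the $\pm1$ eigenspaces of $P$.

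I expect the bookkeeping of signs and of the tangential terms in this step to be the main obstacle. I would first check the computation on a straight line and then on a circle to fix the signs.

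For (iv), with $\tau>0$ and $\kp$ possibly of either sign, I would argue as follows. The discrete spectrum lies in $(-m,m)$, so it suffices to show $\|\sfD_{\tau,m}u\|\ge m\|u\|$. Rather than using (iii), for which the curvature sign is not controlled, I would use the unitary transformation from \cite{BEHT25} or \cite{BHT23} relating $\tau$ and $4/\tau$. Alternatively, the mass term can be written as $m\int(|u|^2)+m\langle\ldots\rangle$ with the cross term $2m\Re\langle -\ii\sigma\cdot\nabla u,\s_3u\rangle=\frac{2m}{\tau}\int_\Sg|u_+-u_-|^2\ge0$ computed above. Then $\|\sfD u\|^2=\|\sigma\cdot\nabla u\|^2+m^2\|u\|^2+(\text{nonnegative})\ge m^2\|u\|^2$. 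Hence $\sigma(\sfD^2)\subset[m^2,\infty)$ and the discrete spectrum is empty. Finally, for (v) I would use the antiunitary charge-conjugation-type map $Cu=\s_1\overline{u}$. It preserves the transmission condition, which I would check using $\s_1\overline{M_\tau}\s_1=M_\tau$, and it anticommutes with $\sfD_{\tau,m}$. It therefore maps $\ker(\sfD-\lm)$ bijectively onto $\ker(\sfD+\lm)$ and gives the symmetry of the spectrum.
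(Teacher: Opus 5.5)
Your proposal is correct. Items (i), (iv) and (v) go as in the paper. For (i) you quote \cite{BEHT25}. For (iv) you use $\|\sfD_{\tau,m}u\|^2=\|(\sigma\cdot\nabla)u\|^2+m^2\|u\|^2+\frac{2m}{\tau}\|u_+-u_-\|^2_{L^2(\Sigma)}$, which avoids integrating $\|(\sigma\cdot\nabla)u\|^2$ by parts; the $\tau\leftrightarrow 4/\tau$ equivalence you mention first is not needed. For (v) you use the same $Cu=\sigma_1\overline{u}$, and your check $\sigma_1\overline{M_\tau}\sigma_1=M_\tau$ is equivalent to the paper's manipulation of the conjugated transmission condition.

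The genuine difference is in (iii): where the curvature term comes from. The paper uses the one-sided identity of \cite{FHL24}, whose boundary term is purely tangential, $\mp\langle f|_\Sigma,\ii\sigma_3\partial_t(f|_\Sigma)\rangle$. The curvature term is \emph{produced} when the two sides are combined. One writes $u_-=M_\tau u_+$ in one pairing and $u_+=M_\tau^{-1}u_-$ in the other, uses $M_\tau\sigma_3=\sigma_3M_\tau^{-1}$, and differentiates $M_\tau^{-1}$ along $\Sigma$ via $\partial_t(\sigma\cdot\nu)=\ii\kappa\sigma_3(\sigma\cdot\nu)$.

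You instead put the curvature into the one-sided identity and let the leftover tangential terms cancel. This works, but the phrase ``commute with the relevant tangential structure'' has to be replaced by the actual reason. Set $P=\ii\sigma_3(\sigma\cdot\nu)$ and $P_\pm=\frac12(\sigma_0\pm P)$. Then $\ii\sigma_3P_\pm=P_\mp\ii\sigma_3$ and $\partial_tP_\pm=\mp\frac{\kappa}{2}(\sigma\cdot\nu)$, which give
\[
\langle f,\ii\sigma_3\partial_t f\rangle_\Sigma=-\int_\Sigma\frac{\kappa}{2}|f|^2\,\mathrm{d}\mathcal H+2\,\mathrm{Re}\,\langle P_-f,\ii\sigma_3\partial_t(P_+f)\rangle_\Sigma .
\]
Moreover $M_\tau=\frac{2-\tau}{2+\tau}P_++\frac{2+\tau}{2-\tau}P_-$ has \emph{constant, mutually reciprocal} eigenvalues. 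Hence the cross term takes the same value for $u_+$ and for $u_-=M_\tau u_+$, and it cancels because it enters with opposite signs from $\Omega_\pm$. Do not mix the two formulations. If you use the \cite{FHL24} identity literally, the tangential terms do not cancel; they are exactly the curvature term.

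What each route buys: your splitting along $\mathrm{ran}\,P_\pm$ makes the mechanism transparent. The curvature term is intrinsic to each side, as in infinite-mass formulas, and the interaction enters only through the reciprocal eigenvalues of $M_\tau$. The same splitting also gives your mass identity $\langle Pu_+,u_+\rangle-\langle Pu_-,u_-\rangle=\frac{2}{\tau}|u_+-u_-|^2$ at once; I checked it, and it is correct. The paper's route is shorter and needs no projections.

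For (ii) with $\tau>0$, the paper simply cites \cite{BEHT25}. Your IMS sketch has two problems:
\begin{itemize}
\item It only identifies the bottom $m^2$, not all of $[m^2,\infty)$.
\item Functions ``smooth across the line'' that lie in $\dom\sfD_{\tau,m}$ must vanish on $\Sigma$, because $M_\tau-\sigma_0$ is invertible for $\tau\neq0$.
\end{itemize}
A cleaner self-contained argument has two halves. Weyl sequences of the free Dirac operator, supported in disks escaping to infinity inside $\Omega_+$, give $(-\infty,-m]\cup[m,\infty)\subset\sess(\sfD_{\tau,m})$. Your bound $\|\sfD_{\tau,m}u\|\ge m\|u\|$ from (iv) gives the reverse inclusion, so no IMS localization is needed.
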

\begin{proof}
	Items~(i) and~(ii) follow from \cite[Theorem~2.3 and Corollary~2.5]{BEHT25} when substituting $c=1$ in \cite{BEHT25}. To show~(iii), we follow  a similar strategy as in the proof of \cite[Theorem~3.2]{FHL24}, see also \cite[Proposition~3.1]{HOP18}, \cite[Theorem~1.5~(iv)]{ALTR17}, or \cite[Proposition~14]{LO18} for similar formulas and constructions. First, as the parametrization $\gamma$ is in the clockwise orientation for $\Omega_+$ and the same convention is used in \cite{FHL24}, one gets in the same way as in \cite[Lemma~3.1]{FHL24} that for all $f, g \in H^1(\Omega_\pm; \mathbb{C}^2)$
	\begin{equation} \label{integration_by_parts}
	  \begin{split}
	    \int_{\Omega_\pm} (\sigma \cdot \nabla) f \overline{(\sigma \cdot \nabla) g} \dd x = \int_{\Omega_\pm} \nabla f \cdot \overline{\nabla g} \dd x \mp \langle f|_\Sigma, \ii \sigma_3 \partial_t (g|_\Sigma) \rangle_{H^{1/2}(\Sigma; \mathbb{C}^2) \times H^{-1/2}(\Sigma; \mathbb{C}^2)}
	  \end{split}
	\end{equation}
	holds, where $\langle \cdot, \cdot \rangle_{H^{1/2}(\Sigma; \mathbb{C}^2) \times H^{-1/2}(\Sigma; \mathbb{C}^2)}$ is the sesquilinear duality product in $H^{1/2}(\Sigma; \mathbb{C}^2) \times H^{-1/2}(\Sigma; \mathbb{C}^2)$ and $(\partial_t g|_\Sigma) (\gamma(s)) = (g|_\Sigma \circ \gamma)'(s)$ is the tangential derivative of $g|_\Sigma$ along $\Sigma$; for the sign of the boundary term, one has to take into account that for $\Omega_-$ the orientation of $\gamma$ is counter clockwise. We also remark that 
	we implicitly mean that
	$\nb f\cdot\ov{\nb g} = \nb f_1\cdot\ov{\nb g_1} + \nb f_2\cdot\ov{\nb g_2}$ for $f= (f_1,f_2)^\top\in H^1(\Omg_\pm;\dC^2)$ and $g= (g_1,g_2)^\top\in H^1(\Omg_\pm;\dC^2)$.
	
	In the following, let $u\in \dom\sfD_{\tau,m}$ be fixed. Then,  \eqref{integration_by_parts} yields
\begin{equation}\label{e-startnorm}
\begin{aligned}
    \|\sfD_{\tau,m}u&\|^2_{L^2(\dR^2 ;\dC^2)}=\|(-\ii\sigma\cdot\nabla )u+m\sigma_3 u\|^2_{L^2(\dR^2 \setminus \Sigma;\dC^2)} \\
    &=\|(\sigma\cdot\nabla )u\|^2_{L^2(\dR^2 \setminus \Sigma;\dC^2))}+m^2\|u\|^2_{L^2(\dR^2;\dC^2)}+2\Re\left(\langle-\ii (\sigma\cdot\nabla )u,m\sigma_3 u\rangle_{L^2(\dR^2 \setminus \Sigma;\dC^2)}\right) \\
    &=\|\nabla  u\|^2_{L^2(\dR^2 \setminus \Sigma;\dC^2\otimes\dC^2)}+m^2\|u\|^2_{L^2(\RR^2;\dC^2)}+2\Re\left(\langle-\ii (\sigma\cdot\nabla )u,m\sigma_3 u\rangle_{L^2(\dR^2 \setminus \Sigma;\dC^2)}\right) \\
    &\qquad+\langle u_-|_\Sigma,\ii \sigma_3 \partial_t (u_-|_\Sigma)\rangle_{H^{1/2}(\Sigma; \mathbb{C}^2)\times H^{-1/2}(\Sigma; \mathbb{C}^2)}\\
    &\qquad -\langle u_+|_\Sigma,\ii \sigma_3 \partial_t (u_+|_\Sigma)\rangle_{H^{1/2}(\Sigma; \mathbb{C}^2)\times H^{-1/2}(\Sigma; \mathbb{C}^2)}.
\end{aligned}
\end{equation}
Recall that $M_\tau$ is defined by~\eqref{def_M} and note that $\sigma_3(\sigma\cdot\nu)=-(\sigma\cdot\nu)\sigma_3$ and~\eqref{M_inv} imply
\begin{equation*}
   M_\tau \sigma_3 = \sigma_3 \left( \frac{4+\tau^2}{4-\tau^2}\sigma_0+\frac{4\tau}{4-\tau^2}\ii\sigma_3(\sigma\cdot\nu) \right) = \sigma_3 M_\tau^{-1}.
\end{equation*}
Moreover, Frenet's formula \eqref{Frenet} and the convention $\nu = (\gamma_2', -\gamma_1')^\top$ yield
\begin{equation*}
  \partial_t (\sigma \cdot \nu) = \sigma_1 \gamma_2'' - \sigma_2 \gamma_1'' = \kappa ( \sigma_1 \nu_2 - \sigma_2 \nu_1) = \ii \kappa \sigma_3 (\sigma \cdot \nu).
\end{equation*}
By combining the last two displayed equations with the product rule and the fact that $M_\tau$ is self-adjoint, one finds that
\begin{equation}\label{e-boundary0s}
\begin{aligned}
    \big\langle & u_-|_\Sigma,\ii \sigma_3\partial_t (u_-|_\Sigma)\big\rangle_{H^{1/2}(\Sigma; \mathbb{C}^2)\times H^{-1/2}(\Sigma; \mathbb{C}^2)}
    - \big\langle u_+|_\Sigma,\ii \sigma_3\partial_t (u_+|_\Sigma)\big\rangle_{H^{1/2}(\Sigma; \mathbb{C}^2)\times H^{-1/2}(\Sigma; \mathbb{C}^2)}  \\
    &= \big\langle M_\tau u_+|_\Sigma,\ii \sigma_3\partial_t (u_-|_\Sigma)\big\rangle_{H^{1/2}(\Sigma; \mathbb{C}^2)\times H^{-1/2}(\Sigma; \mathbb{C}^2)} \\
    &\hspace{6em} -\big\langle u_+|_\Sigma,\ii \sigma_3\partial_t ( M_\tau^{-1} u_-|_\Sigma)\big\rangle_{H^{1/2}(\Sigma; \mathbb{C}^2)\times H^{-1/2}(\Sigma; \mathbb{C}^2)} \\
     &= \big\langle M_\tau u_+|_\Sigma,\ii \sigma_3\partial_t (u_-|_\Sigma)\big\rangle_{H^{1/2}(\Sigma; \mathbb{C}^2)\times H^{-1/2}(\Sigma; \mathbb{C}^2)} \\
    &\hspace{1em} -\left\langle u_+|_\Sigma,\ii \sigma_3 M_\tau^{-1} \partial_t (u_-|_\Sigma) + \ii \sigma_3 \tfrac{4 \tau}{4-\tau^2}\ii \sigma_3 \partial_t (\sigma \cdot \nu) u_-|_\Sigma\right\rangle_{H^{1/2}(\Sigma; \mathbb{C}^2)\times H^{-1/2}(\Sigma; \mathbb{C}^2)} \\
    &= \big\langle M_\tau u_+|_\Sigma,\ii \sigma_3\partial_t (u_-|_\Sigma)\big\rangle_{H^{1/2}(\Sigma; \mathbb{C}^2)\times H^{-1/2}(\Sigma; \mathbb{C}^2)} \\
    &\hspace{6em} -\big\langle M_\tau u_+|_\Sigma,\ii\sigma_3\partial_t (u_-|_\Sigma)\big\rangle_{H^{1/2}(\Sigma; \mathbb{C}^2)\times H^{-1/2}(\Sigma; \mathbb{C}^2)} \\
     &\hspace{6em} +\left\langle u_+|_\Sigma,\ii\sigma_3 \tfrac{4 \tau}{4-\tau^2} \kappa (\sigma \cdot \nu) u_-|_\Sigma \right\rangle_{H^{1/2}(\Sigma; \mathbb{C}^2)\times H^{-1/2}(\Sigma; \mathbb{C}^2)} \\
    &= \frac{1}{2} \Big(-\big\langle \kappa M_\tau u_+|_\Sigma,u_-|_\Sigma\big\rangle_{H^{1/2}(\Sigma; \mathbb{C}^2)\times H^{-1/2}(\Sigma; \mathbb{C}^2)}  +\big\langle \kappa  u_+|_\Sigma, M_\tau^{-1} u_-|_\Sigma\big\rangle_{H^{1/2}(\Sigma; \mathbb{C}^2)\times H^{-1/2}(\Sigma; \mathbb{C}^2)} \Big) \\
    &= \int_\Sg\frac{\kp}{2}\big(|u_+|^2-|u_-|^2\big)\dd \mathcal{H}.
\end{aligned}
\end{equation}
Substituting \eqref{e-boundary0s} into \eqref{e-startnorm} it remains to compute
$
2\Re(\langle-\ii (\sigma\cdot\nabla )u,m\sigma_3 u\rangle_{L^2(\dR^2 \setminus \Sigma;\dC^2)})
$
to deduce the claimed expression in~\eqref{equation_form_square}.

Next, note that due to the fact that $\nu$ is pointing inwards of $\Omega_+$ and outwards of $\Omega_-$ one has
\begin{align*}
    \big\langle-\ii (\sigma\cdot\nabla )u_{\pm},m\sigma_3 u_{\pm}\big\rangle_{L^2(\Omega_{\pm}; \mathbb{C}^2)}&+\big\langle m\sigma_3 u_{\pm},-\ii (\sigma\cdot\nabla )u_{\pm}\big\rangle_{L^2(\Omega_{\pm}; \mathbb{C}^2)}\\
    &=\mp\big\langle-\ii (\sigma\cdot\nu)u_{\pm}|_\Sigma,m\sigma_3 u_{\pm}|_\Sigma\big\rangle_{L^2(\Sigma; \mathbb{C}^2)}.
\end{align*}
This identity
and the fact that $M_\tau$ is self-adjoint and commutes with $-\ii\sigma_3(\sigma\cdot\nu)$ allow for the following simplification: 
\begin{equation} \label{e-boundary1s}
\begin{aligned}
2\Re&\left(\langle-\ii (\sigma\cdot\nabla )u,m\sigma_3 u\rangle_{L^2(\dR^2 \setminus \Sigma;\dC^2)}\right) \\
&=\big\langle-\ii (\sigma\cdot\nabla )u,m\sigma_3 u\big\rangle_{L^2(\dR^2 \setminus \Sigma;\dC^2)}
 +\big\langle m\sigma_3 u,-\ii (\sigma\cdot\nabla )u\big\rangle_{L^2(\dR^2 \setminus \Sigma;\dC^2)} \\
 &=m\Bigl[-\big\langle-\ii \sigma_3(\sigma\cdot\nu)u_+|_\Sigma,u_+|_\Sigma\big\rangle_{L^2(\Sigma; \mathbb{C}^2)}
 +\big\langle-\ii \sigma_3(\sigma\cdot\nu)u_-|_\Sigma,u_-|_\Sigma\big\rangle_{L^2(\Sigma; \mathbb{C}^2)} \\
 &\hspace{4em}+\big\langle u_+|_\Sigma,-\ii \sigma_3(\sigma\cdot\nu)M_\tau u_+|_\Sigma\big\rangle_{L^2(\Sigma; \mathbb{C}^2)}\\
  &\hspace{4em}-\big\langle u_+|_\Sigma,-\ii \sigma_3(\sigma\cdot\nu) M_\tau u_+|_\Sigma\big\rangle_{L^2(\Sigma; \mathbb{C}^2)}\Bigr] \\
 &=m\Bigl[-\big\langle-\ii \sigma_3(\sigma\cdot\nu)u_+|_\Sigma,u_+|_\Sigma\big\rangle_{L^2(\Sigma; \mathbb{C}^2)}
 +\big\langle-\ii \sigma_3(\sigma\cdot\nu )u_-|_\Sigma,u_-|_\Sigma\big\rangle_{L^2(\Sigma; \mathbb{C}^2)} \\
 &\hspace{4em}+\big\langle u_-|_\Sigma,-\ii \sigma_3(\sigma\cdot\nu)u_+|_\Sigma\big\rangle_{L^2(\Sigma; \mathbb{C}^2)}
 -\big\langle u_+|_\Sigma,-\ii \sigma_3(\sigma\cdot\nu)u_-|_\Sigma\big\rangle_{L^2(\Sigma; \mathbb{C}^2)}\Bigr] \\
 &=m \, \Big\langle \ii \sigma_3(\sigma\cdot\nu)(u_+|_\Sigma-u_-|_\Sigma),(u_+|_\Sigma+u_-|_\Sigma)\Big\rangle_{L^2(\Sigma; \mathbb{C}^2)} \\
 &=m\left\langle \ii \sigma_3(\sigma\cdot\nu)(u_+|_\Sigma-u_-|_\Sigma),\frac{2}{\tau}\ii \sigma_3(\sigma\cdot\nu)(u_+|_\Sigma-u_-|_\Sigma)\right\rangle_{L^2(\Sigma; \mathbb{C}^2)}\\
 &=\frac{2m}{\tau}\big\|u_+|_\Sigma-u_-|_\Sigma\big\|^2_{L^2(\Sigma; \mathbb{C}^2)},
\end{aligned}
\end{equation}
where the second to last equality is due to the transmission condition in equation \eqref{transmission_conditions}. By using~\eqref{e-boundary0s} and~\eqref{e-boundary1s} in~\eqref{e-startnorm} one finds that the claimed formula for $\|\sfD_{\tau,m}u\|^2_{L^2(\dR^2;\dC^2)}$ is true.

(iv)  Let  $u\in \dom\sfD_{\tau,m}$. By using \eqref{e-boundary1s} in~\eqref{e-startnorm} one finds for $\tau \in (0, \infty) \setminus \{ 2 \}$ that
\begin{equation*}
\begin{aligned}
    \|\sfD_{\tau,m}u&\|^2_{L^2(\dR^2;\dC^2)}=\|(-\ii\sigma\cdot\nabla )u+m\sigma_3 u\|^2_{L^2(\dR^2 \setminus \Sigma;\dC^2)} \\
    &=\|(\sigma\cdot\nabla )u\|^2_{L^2(\dR^2 \setminus \Sigma;\dC^2)}+m^2\|u\|^2_{L^2(\dR^2;\dC^2)}+2\Re\left(\langle-\ii (\sigma\cdot\nabla )u,m\sigma_3 u\rangle_{L^2(\dR^2 \setminus \Sigma;\dC^2)}\right) \\
    &=\|(\sigma\cdot\nabla )u\|^2_{L^2(\dR^2 \setminus \Sigma;\dC^2)}+m^2\|u\|^2_{L^2(\dR^2;\dC^2)} + \frac{2m}{\tau}\big\|u_+|_\Sigma-u_-|_\Sigma\big\|^2_{L^2(\Sigma; \mathbb{C}^2)} \\
    &\geq m^2 \| u \|^2_{L^2(\dR^2; \mathbb{C}^2)}.
\end{aligned}
\end{equation*}
This shows that $\sfD_{\tau,m}^2$ does not have eigenvalues that are smaller than $m^2$. In particular, this implies $\sigma(\sfD_{\tau,m}) \cap (-m,m) = \emptyset$.
Since $\sess(\sfD_{\tau,m}) = (-\infty, -m] \cup [m, \infty)$ by (ii), it follows that $\sfD_{\tau,m}$ does not have discrete eigenvalues.

(v) In order to show the claim, we follow closely arguments from \cite[Proposition~4.8~(ii)]{BHOBP}, see also \cite[Theorem~2.3~(b)]{HOP18}, \cite[Theorem~1.5~(ii)]{ALTR17}, or \cite[Proposition~6~(ii)]{LO18} for similar proofs. Consider the antilinear charge conjugation operator $C u  =\sigma_1 \overline{u}$. 
Then one has $C^2 u = u$ for all $u \in L^2(\mathbb{R}^2; \mathbb{C}^2)$. We show that 
  \begin{equation} \label{charge_conjugation}
    C \sfD_{\tau,m} = -\sfD_{\tau,m} C,
  \end{equation}
  which yields then the claim of statement~(v). To verify~\eqref{charge_conjugation}, note first by taking the complex 
  conjugate of the transmission condition in~\eqref{transmission_conditions} that $u \in \dom \sfD_{\tau,m}$ if and only if
  \begin{equation}\label{complex_conjugate}
    -\ii \sigma_3 (\overline{\sigma}\cdot \nu)\big( \overline{u_+|_\Sigma} - \overline{u_-|_\Sigma} \big) 
    = \frac{\tau}{2}\big(\overline{u_+|_\Sigma} + \overline{u_-|_\Sigma}\big),
  \end{equation}
  where $\overline\sigma =(\overline{\sigma_1},\overline{\sigma_2})$ and $\overline{\sigma_j}$ is the matrix  with the complex conjugate entries of $\sigma_j$.
  By multiplying this equation with $\sigma_1$ from the left and using the anti-commutation relations for the Pauli matrices, $\overline{\sigma_1} = \sigma_1$, and $\overline{\sigma_2} = -\sigma_2$ 
  one finds that \eqref{complex_conjugate} is equivalent to
  \begin{equation*}
    \ii \sigma_3 (\sigma \cdot \nu)\big( \sigma_1 \overline{u_+|_\Sigma} - \sigma_1 \overline{u_-|_\Sigma}\big) = \frac{\tau}{2}\big(\sigma_1 \overline{u_+|_\Sigma} + \sigma_1 \overline{u_-|_\Sigma}\big),
  \end{equation*}
  \ie $Cu \in \dom \sfD_{\tau,m}$. This shows that $C( \dom  \sfD_{\tau,m}) = \dom  \sfD_{\tau,m}$. Moreover, using again the anti-commutation relations for the Pauli matrices and $\overline{\sigma_2} = -\sigma_2$
  one gets
  \begin{equation*}
    \begin{split}
      (-\ii \sigma \cdot \nabla + m \sigma_3) C u &= (-\ii \sigma \cdot \nabla + m \sigma_3) \sigma_1 \overline{u}\\
      &= \sigma_1 (-\ii \overline{\sigma} \cdot \nabla - m \sigma_3) \overline{u} \\
      &= -\sigma_1 \overline{(-\ii \sigma \cdot \nabla + m \sigma_3) u} \\
      &= - C \bigl( -\ii \sigma \cdot \nabla + m \sigma_3) u \bigr),
    \end{split}
  \end{equation*}
  which implies \eqref{charge_conjugation} and finishes the proof.
\end{proof}

\begin{remark} \label{remark_confinement}
  First, note that the claims in items~(i) and (ii) in Proposition~\ref{essentials} are also true for $\tau = \pm 2$, as there is no restriction in \cite{BEHT25} on $\tau$. Moreover, the arguments leading to Proposition~\ref{essentials}~(iv) can also be done for $\tau = 2$.
  
  In particular, this implies that in the confinement case $\tau = \pm 2$ the operator $\sfD_{\tau,m}$ does not have geometrically induced discrete eigenvalues. Indeed, for $\tau = 2$ one has, as for any $\tau \geq 0$, that $\sigma(\sfD_{2,m}) = \sess(\sfD_{2,m}) = (-\infty,-m]\cup[m,\infty)$, while for $\tau = -2$ one has by Proposition~\ref{essentials}~(ii) that $\sess(\sfD_{-2,m}) = \mathbb{R}$. Hence, in both cases $\tau = \pm 2$ the discrete spectrum of $\sfD_{\tau,m}$ is empty.
\end{remark}

\section{Finiteness of the discrete spectrum}
\label{sec:finiteness}

Our main goal in this section is to prove that the operator $\sfD_{\tau,m}$ defined by~\eqref{eq:DiracOperator} has finitely many discrete eigenvalues. 
We remark that in this section we deal with a general smooth curve being a local deformation of the broken line. We do not assume that the curve satisfies Hypothesis~\ref{hyp}. Recall that for a closed, densely defined, semibounded symmetric sesquilinear form $\frt$ with domain $\dom\frt\subset\mathcal{H}$ in an infinite-dimensional Hilbert space $\mathcal{H}$, the quantity $N(\frt,\lambda)$, $\lambda\in\RR$, is defined by~\eqref{eigenvalue_counting_form}. The main result in this section is the following:

\begin{theorem}\label{finite prop}
Let $\tau < 0$, $m > 0$, and let $\frd_{\tau,m}$ be as in~\eqref{equation_form_square}.
Then there holds
\begin{equation} \label{form_finite}
N\left(\frd_{\tau,m},m^2\left(\tfrac{\tau^2-4}{\tau^2+4}\right)^2\right)<\infty.
\end{equation}
In particular, the operator $\sfD_{\tau,m}$ has finite discrete spectrum.
\end{theorem}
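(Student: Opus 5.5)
Set $\Lambda := m^2\big(\tfrac{\tau^2-4}{\tau^2+4}\big)^2$. The plan is to bound $N(\frd_{\tau,m},\Lambda)$ from above by a sum of counting functions of simpler forms on pieces of the plane, and to show that each piece contributes only finitely many eigenvalues below $\Lambda$. The form domain of $\frd_{\tau,m}$ is $\dom\sfD_{\tau,m}$, so we may relax this domain (drop conditions across auxiliary cuts) and use Neumann bracketing.

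\textbf{Step 1 (IMS localization).} Fix a large radius $L>R$ and a quadratic partition of unity $\chi_0^2+\chi_1^2+\chi_2^2+\chi_3^2=1$ with the following supports:
\begin{itemize}
\item[] $\chi_0$ is supported in $\cB_{2L}$;
\item[] $\chi_1,\chi_2$ are supported in conical neighbourhoods of the two rays of $\Sg_0$ outside $\cB_L$;
\item[] $\chi_3$ is supported away from $\Sg$ outside $\cB_L$.
\end{itemize}
We may take $|\nb\chi_j|\le C/L$ on the transition regions, or even $C/|x|$ with a logarithmic-type cutoff. Multiplication by $\chi_j$ preserves the transmission condition, since the condition is linear and pointwise. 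The boundary terms in \eqref{equation_form_square} are pointwise quadratic, so the IMS formula gives
\[
\frd_{\tau,m}[u]=\sum_j\Big(\frd_{\tau,m}[\chi_j u]-\|(\nb\chi_j)u\|^2\Big).
\]
The pieces are then handled as follows.
\begin{itemize}
\item[] On $\supp\chi_0$ we have a bounded region. The form there, with error $-W$ for a bounded $W$, has compact resolvent by compactness of $H^1\hookrightarrow L^2$ and of the trace on bounded domains. It therefore contributes finitely many eigenvalues below any level.
\item[] On $\supp\chi_3$ we have no interface, so the form is at least $m^2-C|x|^{-2}$. Since $m^2>\Lambda$, this piece contributes nothing once $L$ is large.
\end{itemize}

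\textbf{Step 2 (near each ray).} Near a ray of $\Sg_0$, the curvature vanishes. In coordinates $(s,t)$ along and across the ray, we can, after a further Neumann cut, restrict to a half-strip-like region $\{s>L,\ |t|<\theta s\}$, where $\theta$ is a small fixed opening. The form there is
\[
\int |\p_s u|^2+|\p_t u|^2 + \tfrac{2m}{\tau}\int |u_+-u_--|^2\,ds + m^2\|u\|^2 - \int W|u|^2,
\]
with $W\lesssim s^{-2}$ coming from the IMS errors. Drop $|\p_s u|^2$ only partially, and decompose with respect to the transversal operator $T_s$ on $(-\theta s,\theta s)$. This transversal operator carries the jump term $\tfrac{2m}{\tau}|u(0^+)-u(0^-)|^2$ together with the transmission condition, and Neumann conditions at $\pm\theta s$. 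For the straight line, the lowest point of the transversal spectrum is exactly $\Lambda$; this is the bottom of the essential spectrum in Proposition~\ref{essentials}(ii) and is attained by the exponentially decaying profile $e^{-\mu|t|}$. On the finite interval with Neumann ends, the first eigenvalue $E_1(s)$ differs from $\Lambda$ only by $O(e^{-cs})$, and the second eigenvalue stays uniformly above $\Lambda$. Projecting onto the first transversal mode reduces the counting to the one-dimensional operator $-\frac{d^2}{ds^2}+E_1(s)-\Lambda-W(s)$ on $(L,\infty)$ with a Neumann condition at $L$. The rest is controlled by the spectral gap of $T_s$, with the $s$-dependence of the domain handled by a dilation in $t$, which produces errors of order $s^{-2}$ and $e^{-cs}$.

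\textbf{Step 3 (Bargmann bound) and the main obstacle.} The effective one-dimensional potential $V(s)=-(W(s)+O(e^{-cs}))$ must satisfy a Bargmann-type condition, for instance
\[
\int_L^\infty s\,|V(s)|\,ds<\infty,
\]
after adapting the bound to the Neumann endpoint. This is the main obstacle. An IMS error of size $c\,s^{-2}$ makes the integral diverge logarithmically, and Hardy-critical potentials $-\tfrac14 s^{-2}$ may produce infinitely many eigenvalues. The cutoffs in Step 1 must therefore be chosen with care. I would first try placing the cuts between the rays and the bisector at a fixed angle, where the eigenfunctions are already exponentially small in the transversal variable. In that region $|\nb\chi|^2$ is multiplied by $u$ supported where the transversal gap is of order one, so it can be absorbed into the second-mode gap. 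The only errors that reach the first mode are then exponentially small, and the Bargmann integral converges. The finiteness of $N$ follows by adding the contributions of all pieces. The final claim about $\sfD_{\tau,m}$ then follows because its eigenvalues $\lambda$ in the gap correspond to eigenvalues $\lambda^2<\Lambda$ of $\sfD_{\tau,m}^2$.
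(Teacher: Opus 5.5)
Your architecture is the one the paper uses: localize, project onto the transversal ground state near each ray, absorb the complement into the transversal gap, and apply a Bargmann bound to the effective one-dimensional potential. Your Step~3 diagnosis, that no error of size $s^{-2}$ may reach the ground channel, is exactly the crux.

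The gap is in Step~2, which reintroduces precisely such errors. Cutting to the sector $\{s>L,\ |t|<\theta s\}$ with Neumann walls makes the transversal problem $s$-dependent. The dilation $t=\theta s\eta$ then turns the ground mode into $\sqrt{\theta s}\,\phi_s(\theta s\eta)$, which concentrates as $s$ grows. Its $s$-derivative has squared $L^2$-norm of order $s^{-2}$, and the $s$-dependent Jacobian and the terms $\frac{\eta}{s}\p_\eta$ contribute at the same order. So the reduction itself puts $O(s^{-2})$ terms into the ground channel and into its coupling with the complement; you even list them. Moving the IMS transition to a fixed angle does nothing about them, and nothing else in your argument controls them. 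Left as they are, they make $\int s|V|\,\dd s$ diverge by your own Step~3 remark. Hence the claim that only exponentially small errors reach the first mode is unjustified.

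There is also a smaller error: the second transversal eigenvalue does not stay above $\Lambda$. On the ray the transmission matrix is constant. Diagonalizing it (the paper's $\Theta$) gives two scalar channels with $\mu=\tau$ and $\mu=-\tau$, and $E_1(\sfP_\mu)=-16m^2\mu^2/(\mu^2+4)^2$ is even in $\mu$. So the transversal ground state is doubly degenerate, and you must project onto a two-dimensional space or treat the channels separately.

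The missing idea is that no transversal cut is needed. Because $\chi_1u$ vanishes outside a cone around the ray, where $\kp=0$, extend it by zero to $\dR^2$. Compare with the form on the whole plane with the full straight line through the ray and potential $-W$ restricted to $\supp\chi_1$. This embedding preserves form values, so it can only increase the counting function. The transversal operators are then the $s$-independent whole-line operators $\sfP_{\pm\tau}$ of Lemma~\ref{lemma_1D}, which gives the following:
\begin{itemize}
\item the projection onto $\Phi_{\pm\tau}$ commutes with $\p_s$;
\item the gradient-plus-jump cross term between projection and complement vanishes identically;
\item the complement is transversally nonnegative;
\item the $W$-terms on the complement are absorbed using $\|W\|_\infty\le C/L^2$ and Young's inequality.
\end{itemize}
The ground channel then only sees $Z(s)=\int|\Phi_{\pm\tau}(t)|^2\big(W+\eps^{-1}W^2\big)\dd t$. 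With your transition placed at a fixed angle from the ray, $Z$ is a bounded compactly supported term (from the annulus) plus a term of size $e^{-cs}$. Hence $\int|s|Z(s)\,\dd s<\infty$ and the Bargmann bound applies.

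This is the paper's Step~3. The paper simplifies further: it first imposes a Neumann condition on the $x_1$-axis to separate the rays, then uses a cutoff depending only on $x_2$. The IMS error is then bounded by $C/T^2$ and lives in a strip crossing the ray transversally, with no angular cutoff at all.
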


In the proof of Theorem~\ref{finite prop} the following auxiliary result about a one-dimensional operator will be used:

\begin{lemma} \label{lemma_1D}
  For $m>0$ and $\mu \in \mathbb{R} \setminus \{0, \pm 2 \}$ the quadratic form $\frp_\mu$ in $L^2(\mathbb{R}; \mathbb{C})$ defined by
  \[
  \begin{aligned}
\frp_\mu[u]&:=\int_\RR|u'|^2\dd t-\frac{2m}{|\mu|}|u(0^+)-u(0^-)|^2,
\\
\dom\frp_\mu&:=\left\{u\in H^1(\RR\setminus\{0\}; \mathbb{C})\colon u(0^-)=\frac{2 - \mu}{2 + \mu} u(0^+) \right\},
\end{aligned}
\]
is densely defined, semibounded from below, and closed. The associated self-adjoint operator $\sfP_\mu$ satisfies $\sigma_\textup{ess}(\sfP_\mu) = [0, \infty)$ and $\sfP_\mu$ has exactly one simple negative eigenvalue
\begin{equation} \label{equation_eigenfunction}
E_1(\sfP_\mu) = -\frac{16 m^2 \mu^2 }{(\mu^2+4)^2}
\end{equation}
with normalized eigenfunction
\begin{equation} \label{def_Phi_tau}
\Phi_{\mu}(t)=\begin{cases} c_{\mu}\exp\left(-\sqrt{-E_1(\sfP_\mu)}t\right), &\text{ if } t>0, \\ c_{\mu}\frac{2-\mu}{2+\mu} \exp\left(\sqrt{-E_1(\sfP_\mu)}t\right), &\text{ if }t<0,\end{cases}
\end{equation}
where $c_\mu\in\RR$ is a suitable normalization constant.
\end{lemma}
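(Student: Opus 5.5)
The plan is to treat $\frp_\mu$ as a one-dimensional form with a point interaction at $0$ of $\delta'$-type, and to compute its negative spectrum explicitly by solving the eigenvalue ODE on both half-lines.

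\textbf{Step 1: the form is closed, semibounded and densely defined.} Set $\theta := \frac{2-\mu}{2+\mu}$. Since $\mu\ne \pm 2$, we have $\theta\in\dR\sm\{0\}$, and since $\mu\neq 0$ also $\theta\ne 1$.

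Density of $\dom\frp_\mu$ holds because it contains $C_0^\infty(\dR\sm\{0\})$.

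For semiboundedness, use the one-dimensional trace inequality
\[
|v(0^\pm)|^2\le \eps\|v'\|^2_{L^2(\dR_\pm)}+\eps^{-1}\|v\|^2_{L^2(\dR_\pm)},\qquad \eps>0.
\]
It gives $|u(0^+)-u(0^-)|^2\le 2\eps\|u'\|^2+2\eps^{-1}\|u\|^2$. Hence the boundary term is form-bounded with relative bound $0$ with respect to $\int|u'|^2$ on $H^1(\dR\sm\{0\})$.

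The linear constraint $u(0^-)=\theta u(0^+)$ is continuous in the $H^1(\dR\sm\{0\})$ norm, so it defines a closed subspace. The unperturbed form restricted to this subspace is closed. By the KLMN theorem, $\frp_\mu$ is closed and semibounded, and it has an associated self-adjoint operator $\sfP_\mu$.

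\textbf{Step 2: identify $\sfP_\mu$ and its essential spectrum.} Integrate by parts for $u,v\in\dom\frp_\mu$. The operator acts as $-u''$ on $\dR\sm\{0\}$, and its domain consists of $u\in H^2(\dR\sm\{0\})$ satisfying $u(0^-)=\theta u(0^+)$ together with a second condition relating $u'(0^\pm)$ to the jump $u(0^+)-u(0^-)=(1-\theta)u(0^+)$.

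To find that condition, take $v$ in the form domain. The boundary terms are
\[
-u'(0^+)\ov{v(0^+)}+u'(0^-)\ov{v(0^-)}-\frac{2m}{|\mu|}(1-\theta)^2u(0^+)\ov{v(0^+)}.
\]
Using $v(0^-)=\theta v(0^+)$, these must vanish, which gives
\[
u'(0^+)-\theta u'(0^-)=-\frac{2m}{|\mu|}(1-\theta)^2u(0^+).
\]

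The essential spectrum is obtained by comparing resolvents. $\sfP_\mu$ and the decoupled Neumann Laplacian on $\dR_-\oplus\dR_+$ are both self-adjoint extensions of the symmetric operator $-\frac{d^2}{dx^2}$ on $H^2_0(\dR\sm\{0\})$, which has deficiency indices $(2,2)$. Their resolvent difference therefore has rank at most two, and $\sess(\sfP_\mu)=[0,\infty)$. Non-negativity of the remaining spectrum outside finitely many points follows from the same finite-rank argument.

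\textbf{Step 3: negative eigenvalues.} For $E=-k^2$ with $k>0$, $L^2$ solutions have the form $u=a e^{-kt}$ for $t>0$ and $u=b e^{kt}$ for $t<0$.

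The first condition forces $b=\theta a$. Substituting into the second condition gives
\[
-k-\theta^2k=-\frac{2m}{|\mu|}(1-\theta)^2,
\qquad\text{so}\qquad
k=\frac{2m(1-\theta)^2}{|\mu|(1+\theta^2)}.
\]
Now compute with the explicit $\theta$: $1-\theta=\frac{2\mu}{2+\mu}$ and $1+\theta^2=\frac{2(4+\mu^2)}{(2+\mu)^2}$. This yields
\[
k=\frac{2m\cdot 4\mu^2}{|\mu|\cdot 2(4+\mu^2)}=\frac{4m|\mu|}{\mu^2+4},
\]
and therefore $E_1=-k^2=-\frac{16m^2\mu^2}{(\mu^2+4)^2}$, as claimed. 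Since $k>0$ always, this eigenvalue exists for every admissible $\mu$.

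The solution is unique up to a scalar, so the eigenvalue is simple and the eigenfunction is $\Phi_\mu$, with $c_\mu$ fixed by normalization. No other negative eigenvalue exists because the equation for $k$ has exactly one solution.

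\textbf{Main obstacle.} The main care point is the sign bookkeeping in the boundary terms of Step 2, so that the derived transmission condition is correct and the positive root $k$ comes out. I would double-check it by evaluating $\frp_\mu[\Phi_\mu]=E_1\|\Phi_\mu\|^2$ directly.
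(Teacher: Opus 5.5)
Your proposal is correct and follows essentially the same route as the paper: closedness from $\dom\frp_\mu$ being a closed subspace of $H^1(\dR\setminus\{0\})$, the same transmission conditions for $\sfP_\mu$, $\sess(\sfP_\mu)=[0,\infty)$ because $\sfP_\mu$ extends $-\frac{d^2}{dx^2}$ on $H^2_0(\dR\setminus\{0\})$, and the exponential ansatz giving the unique root $k=\frac{4m|\mu|}{\mu^2+4}$. The only differences are in detail. You make semiboundedness explicit via the trace inequality and KLMN, where the paper just asserts norm equivalence. For the essential spectrum you use the rank-two resolvent difference with the decoupled Neumann Laplacian, where the paper cites Chernoff--Hughes.
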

\begin{proof}
  First, as $\dom\frp_\mu$ is  a closed subspace of $H^1(\mathbb{R} \setminus \{ 0 \})$ and the norm associated with $\frp_\mu$ is equivalent to the $H^1$-norm, $\frp_\mu$ is closed. A direct calculation shows that the self-adjoint operator $\sfP_\mu$ that is associated with $\frp_\mu$ is given by
  \begin{equation*}
    \begin{split}
      \sfP_\mu u &= -u'' \quad \text{in } \mathbb{R} \setminus \{ 0 \}, \\
      \dom \sfP_\mu &= \bigg\{ u \in H^2(\mathbb{R}^2 \setminus \{ 0 \})\colon u(0^-)=\frac{2 - \mu}{2 + \mu} u(0^+), \\
      &\qquad \qquad  \qquad \qquad \qquad \frac{2 - \mu}{2 + \mu} u'(0^-) - u'(0^+) = \frac{2m}{|\mu|} (u(0^+) - u(0^-)) \left( 1 - \frac{2 - \mu}{2 + \mu} \right) \bigg\}.
    \end{split}
  \end{equation*}
  In particular, $\sfP_\mu$ is a self-adjoint extension of the symmetric operator
  \begin{equation*}
    \sfS u = -u'', \qquad \dom \sfS = H^2_0(\mathbb{R} \setminus \{ 0 \}).
  \end{equation*}
Therefore, by \cite[Theorem~3.1]{CH93} one has $\sigma_\textup{ess}(\sfP_\mu) = [0, \infty)$. 
 Finally, a direct calculation shows that any eigenfunction $\Phi$ associated with an eigenvalue $E < 0$ of $\sfP_\mu$ must be of the form
 \begin{equation*}
   \Phi(t) = \begin{cases} c_1 \exp\left(-\sqrt{-E}t\right), &\text{ if } t>0, \\ c_2 \exp\left(\sqrt{-E}t\right), &\text{ if }t<0,\end{cases}
 \end{equation*}
 where $c_1, c_2$ are suitable constants. The first transmission condition for $\Phi \in \dom \sfP_\mu$ yields $c_2 = \frac{2 - \mu}{2 + \mu} c_1$, while the second transmission condition holds, if
 \begin{equation*}
   \sqrt{-E} \left[ \left( \frac{2-\mu}{2+\mu} \right)^2 + 1 \right] c_1 = \frac{2m}{|\mu|} \left( 1 - \frac{2-\mu}{2+\mu} \right)^2 c_1.
 \end{equation*}
 This equation has exactly one solution given by $E = E_1(\sfP_\mu) = -\frac{16 m^2 \mu^2 }{(\mu^2+4)^2}$, which shows that~\eqref{equation_eigenfunction} and~\eqref{def_Phi_tau} are true.
\end{proof}

In the proof of Theorem~\ref{finite prop} we will make use of projections associated with the eigenfunctions in~\eqref{def_Phi_tau}. We denote by
\begin{equation*}
  \Gamma_0 = \{ (0, x_2): x_2 \in \mathbb{R} \}, \qquad \check{\Omega}_\pm := \{ (x_1,x_2) \in \mathbb{R}^2\colon \pm x_1 \geq 0 \},
\end{equation*}
and we write for $u \in H^1(\mathbb{R}^2 \setminus \Gamma_0; \mathbb{C}^d)$, $d \in \mathbb{N}$,
\begin{equation*}
  u_\pm := u |_{\check{\Omega}_\pm}.
\end{equation*}
The proof of the following result is straightforward taking the properties of $\Phi_\mu$ into account and is left to the reader.

\begin{lemma} \label{lemma_projections} 
  Let $\mu \in \mathbb{R} \setminus \{0, \pm 2 \}$, $\Phi_\mu$ be given by~\eqref{def_Phi_tau} and, for $u\in L^2(\RR^2;\CC)$, denote 
  \begin{equation} \label{def_psi_tau}
    \psi_{\mu,u} := \int_\mathbb{R} u(t,\cdot) \overline{\Phi_\mu(t)} \dd t.
  \end{equation}
  Then, the mappings
\begin{equation} \label{def_projections1}
  \Pi_\mu: L^2(\mathbb{R}^2; \mathbb{C}) \rightarrow L^2(\mathbb{R}^2; \mathbb{C}), \quad (\Pi_\mu u)(x_1,x_2) = \Phi_\mu(x_1) \psi_{\mu,u}(x_2)
\end{equation}
and
\begin{equation} \label{def_projections2}
  \mathbb{P}_\mu: L^2(\mathbb{R}^2; \mathbb{C}) \rightarrow L^2(\mathbb{R}^2; \mathbb{C}), \quad \mathbb{P}_\mu = I - \Pi_\mu,
\end{equation}
are well defined and bounded and they satisfy the following properties:
\begin{itemize}
  \item[(i)] For $u \in H^1(\mathbb{R}^2 \setminus \Gamma_0; \mathbb{C})$ one has $\Pi_\mu u, \mathbb{P}_\mu u \in  H^1(\mathbb{R}^2 \setminus \Gamma_0; \mathbb{C})$ and $(\Pi_\mu u)_-|_{\Gamma_0} = \frac{2 - \mu}{2 + \mu} (\Pi_\mu u)_+|_{\Gamma_0}$.
  \item[(ii)] For almost all $x_2 \in \mathbb{R}$ one has $\Pi_\mu u(\cdot,x_2)\perp_{L^2(\RR)}\mathbb{P}_\mu u(\cdot,x_2)$.
  \item[(iii)] For $u \in H^1(\mathbb{R}^2 \setminus \Gamma_0; \mathbb{C})$ satisfying $(\mathbb{P}_\mu u)_-|_{\Gamma_0} = \frac{2 - \mu}{2 + \mu} (\mathbb{P}_\mu u)_+|_{\Gamma_0}$ one has for almost all $x_2 \in \mathbb{R}$
  \begin{equation*}
    \int_\RR|\partial_1 \mathbb{P}_\mu u(x_1, x_2)|^2\dd x_1 - \frac{2m}{|\mu|}|(\mathbb{P}_\mu u)_+(0, x_2)-(\mathbb{P}_\mu u)_-(0,x_2)|^2 \geq 0.
  \end{equation*}
\end{itemize}
\end{lemma}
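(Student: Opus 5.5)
The statement to prove is Lemma~\ref{lemma_projections}. All three properties follow by working fibrewise in $x_2$, using only the fact that $\Phi_\mu$ is a normalized element of $\dom\frp_\mu$ and the ground state of $\sfP_\mu$ (Lemma~\ref{lemma_1D}).

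\emph{Boundedness.} Since $\|\Phi_\mu\|_{L^2(\RR)}=1$, Cauchy--Schwarz and Fubini give $\|\psi_{\mu,u}\|_{L^2(\RR)}\le\|u\|_{L^2(\RR^2)}$. Hence $\|\Pi_\mu u\|=\|\psi_{\mu,u}\|\le\|u\|$. Moreover $\Pi_\mu$ is the orthogonal projection onto $\Phi_\mu\otimes L^2(\RR)$, because on each fibre it is the rank-one projection onto $\Phi_\mu$. It follows that $\mathbb P_\mu=I-\Pi_\mu$ is also a bounded (orthogonal) projection.

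\emph{Item (i).} Let $u\in H^1(\RR^2\sm\Gamma_0)$. Since $\Phi_\mu$ is bounded and $L^2$-integrable, we may differentiate under the integral in $x_2$, which gives $\psi_{\mu,u}'=\psi_{\mu,\partial_2u}\in L^2(\RR)$. So $\psi_{\mu,u}\in H^1(\RR)$. Because $\Phi_\mu\in H^1(\RR\sm\{0\})$, the tensor product $\Phi_\mu\otimes\psi_{\mu,u}$ lies in $H^1(\RR^2\sm\Gamma_0)$, with $\partial_1\Pi_\mu u=\Phi_\mu'\psi_{\mu,u}$ and $\partial_2\Pi_\mu u=\Phi_\mu\psi_{\mu,u}'$ on each half-plane. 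Then $\mathbb P_\mu u=u-\Pi_\mu u\in H^1(\RR^2\sm\Gamma_0)$. The traces on $\Gamma_0$ are $\Phi_\mu(0^\pm)\psi_{\mu,u}$, and the relation $\Phi_\mu(0^-)=\frac{2-\mu}{2+\mu}\Phi_\mu(0^+)$ from \eqref{def_Phi_tau} gives the stated jump condition.

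\emph{Item (ii).} By Fubini, $u(\cdot,x_2)\in L^2(\RR)$ for a.e.\ $x_2$, and for such $x_2$ we have $\psi_{\mu,u}(x_2)=\langle u(\cdot,x_2),\Phi_\mu\rangle$. So $\Pi_\mu u(\cdot,x_2)$ is the orthogonal projection of $u(\cdot,x_2)$ onto $\operatorname{span}\{\Phi_\mu\}$, and $\mathbb P_\mu u(\cdot,x_2)$ is its complement; in particular they are orthogonal.

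\emph{Item (iii).} Set $v=\mathbb P_\mu u$. By Fubini and the trace theorem, for a.e.\ $x_2$ the fibre $v(\cdot,x_2)$ lies in $H^1(\RR\sm\{0\})$, its one-sided values at $0$ coincide with the traces $v_\pm(0,x_2)$, and the assumed transmission condition holds pointwise. Thus $v(\cdot,x_2)\in\dom\frp_\mu$, and by (ii) it is orthogonal to $\Phi_\mu$. By Lemma~\ref{lemma_1D}, $E_1(\sfP_\mu)$ is the only negative eigenvalue, it is simple, and $\sess(\sfP_\mu)=[0,\infty)$. The min-max principle therefore gives $\lambda_2(\frp_\mu)\ge0$, and the form is nonnegative on the orthogonal complement of $\Phi_\mu$ within $\dom\frp_\mu$. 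Hence $\frp_\mu[v(\cdot,x_2)]\ge0$, which is exactly the claimed inequality.

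The only point needing care is identifying the fibres' one-sided boundary values with the two-dimensional traces a.e. This follows from density of smooth functions in $H^1$ of each half-plane together with continuity of both trace maps.
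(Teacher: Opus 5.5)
Your proposal is correct and follows the fibrewise argument the paper has in mind; the paper leaves this proof to the reader, pointing only to the properties of $\Phi_\mu$. As you do, (i) and (ii) come from $\Phi_\mu\in H^1(\mathbb{R}\setminus\{0\})$ being normalized with $\Phi_\mu(0^-)=\frac{2-\mu}{2+\mu}\Phi_\mu(0^+)$. Item (iii) comes from the a.e.\ identification of fibre boundary values with traces, combined with the nonnegativity of $\frp_\mu$ on $\{\Phi_\mu\}^\perp\cap\dom\frp_\mu$ given by Lemma~\ref{lemma_1D}.

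The one step you might write out is the passage from $\lambda_2(\frp_\mu)\ge 0$ to nonnegativity on $\{\Phi_\mu\}^\perp$. It uses $\frp_\mu(f,\Phi_\mu)=E_1(\sfP_\mu)\langle f,\Phi_\mu\rangle=0$ in a two-dimensional test space; alternatively, the spectral theorem gives it directly.
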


Now we are prepared to show Theorem~\ref{finite prop}.

\begin{proof}[Proof of Theorem~\ref{finite prop}]
First, recall that for a closed, densely defined, semibounded symmetric sesquilinear form $\frt$ with domain $\dom\frt\subset\mathcal{H}$ in an infinite-dimensional Hilbert space $\mathcal{H}$ the quantity $\lambda_n(\frt)$, $n\in\NN$, is defined by~\eqref{min_max_form}.
Moreover, throughout the proof we will frequently use the notations 
\[
\RR^2_+=\RR\times(0,\infty),\quad \RR^2_-=\RR\times(-\infty,0).
\]
For the sake of convenience, set
\[
\fra[u]:=\frd_{\tau,m}[u]-m^2\|u\|_{L^2(\R^2)}^2,
\qquad \dom\fra:=\dom\frd_{\tau,m},
\]
and
\[
\lambda_\tau:=m^2\left(\frac{\tau^2-4}{\tau^2+4}\right)^2-m^2 = -\frac{16 m^2 \tau^2 }{(\tau^2+4)^2}.	
\]
The statement of the theorem is thus equivalent to $N(\fra,\lm_\tau) < \infty$. Throughout the analysis we assume for definiteness that $\Sg$ is a local deformation of the broken line $\Sg_0$ defined in~\eqref{eq:Sigma0}.

The rest of the proof is divided into steps for convenience of the reader. In \textit{Step~1} the IMS formula is used to ``partition'' and estimate the quadratic form $\fra$ from below in the sense of ordering of forms into parts, where the interaction support is part of the straight line, and into parts, where the interaction is compactly supported. In \textit{Step~2} the finiteness of the discrete spectrum associated with the part, where the interaction is compactly supported, is shown. In \textit{Step~3} the finiteness of the discrete spectrum associated with the part, where the interaction is part of a straight line, is verified. Finally, in \textit{Step~4} these results are combined to prove the claim of the theorem.

\noindent\emph{Step 1: }
In the following we will ``partition'' and estimate the quadratic form $\fra$ from below in the sense of ordering of forms. First, we impose a Neumann boundary condition on the $x_1$-axis. It will be convenient to introduce the notation $\Sg_\pm:=\Sg\cap\R^2_\pm$. More precisely we define the quadratic form $\fra_{\rm N}:=\fra_{\rm N}^+\oplus\fra_{\rm N}^-$ in $L^2(\dR^2;\dC^2)$, where
\begin{equation*}
\begin{aligned}
\fra_{\rm N}^\pm[u]
&:= \int_{\dR^2_\pm}|\nb u|^2\dd x + \frac{2m}{\tau}\int_{\Sg_\pm}|u_+ - u_-|^2\dd \mathcal{H}
		+\int_{\Sg_\pm}\frac{\kp}{2}\big(|u_+|^2-|u_-|^2\big)\dd \mathcal{H},\\
\dom\fra_{\rm N}^\pm &:=\Big\{
	u \in H^1(\Omg_+\cap\R^2_\pm; \mathbb{C}^2)\oplus H^1(\Omg_-\cap\R^2_\pm; \mathbb{C}^2) \colon u_-|_{\Sg_\pm} = M_\tau u_+|_{\Sg_\pm}\Big\}.
\end{aligned}
\end{equation*}
Due to $\fra_{\rm N}[u]=\fra[u]$ for $u\in\dom\fra$ and $\dom\fra\subset\dom\fra_{\rm N}$, we have $\lambda_n(\fra_{\rm N})\le\lambda_n(\fra)$ for all $n\in\NN$ and therefore for any $\lambda\in\RR$
\begin{equation}\label{N-2}
N(\fra,\lambda)\le N(\fra_{\rm N},\lambda).
\end{equation}
Moreover, we have 
\begin{equation}\label{N-1}
N(\fra_{\rm N},\lambda)=N(\fra_{\rm N}^-,\lambda)+N(\fra_{\rm N}^+,\lambda).
\end{equation}
It is sufficient to prove that $N(\fra_{\rm N}^\pm,\lambda_\tau)$ are finite. Since the proofs in both half-planes are analogous, below we only prove the finiteness of $N(\fra_{\rm N}^+,\lambda_\tau)$. We would like to ''partition''  $\fra_{\rm N}^+$ into forms which either have $\delta$-shell interactions on the arc or on the ray or have no interaction at all. For that purpose we choose a so-called IMS partition of unity \cite[Section~3.1]{cycon}. Namely, let $C^\infty$-smooth functions $\chi_0,\chi_1\colon(0,\infty)\to[0,1]$ be such that $\chi_0(t)=1$ for $0<t<1$ and $\chi_0(t)=0$ for $t>2$ and $\chi_0^2+\chi_1^2=1$. We define $\chi_{j,T}(x_1,x_2):=\chi_j(x_2/T)$ with $T>0$ and $j\in\{0,1\}$. Clearly, $\partial_1 \chi_{j,T}=0$. Moreover, $\chi_0(t)^2 + \chi_1(t)^2=1$ implies $2 (\chi_{0,T} \partial_2 \chi_{0,T} + \chi_{1,T} \partial_2 \chi_{1,T}) = 0$.
Hence, by straightforward calculations we get for any $u\in\dom\fra_{\rm N}^+$
\[
\fra_{\rm N}^+[u]=\fra_{\rm N}^+\big[\chi_{0,T}u\big]+\fra_{\rm N}^+[\chi_{1,T}u]-\int_{\R^2_+}\big(|\partial_2 \chi_{0,T}(x)|^2+|\partial_2 \chi_{1,T}(x)|^2\big)|u|^2\dd x.
\]
Let us introduce the potential
\begin{equation}
V_T(x_1,x_2):=|\partial_2 \chi_{0,T}(x)|^2+|\partial_2 \chi_{1,T}(x)|^2=\frac{1}{T^2}\left(\left|\chi_{0}'\left(\frac{x_2}{T}\right)\right|^2+\left|\chi_{1}'\left(\frac{x_2}{T}\right)\right|^2\right)\label{potential}
\end{equation}
and the subsets 
\[
\cA_T:=\{(x_1,x_2)\in\R^2_+\colon 0<x_2<2T\},\qquad
\cB_T:=\{(x_1,x_2)\in\R^2_+\colon x_2>T\}.
\]
	\begin{figure}[h]
\includegraphics[width=7cm, keepaspectratio]{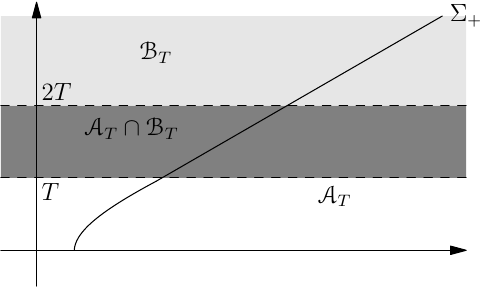}
	\caption{Schematic picture of the sets $\cA_T$ and $\cB_T$. The region $\cA_T \setminus \cB_{T}$ is depicted in white, $\cB_T \setminus \cA_T$ in light gray, and $\cA_T \cap \cB_T$  in gray.}
	\end{figure}
In the following, we choose $T>0$ large enough so that $\Sg_+\cap\cB_T$ is part of a straight line.
Then $\kappa=0$ on $\Sg_+ \cap \cB_T$ and thus,
\begin{align*}
\begin{split}
\fra_{\rm N}^+[u]=&\int_{\cA_T}\big(|\nabla(  \chi_{0,T}u)|^2-V_T|\chi_{0,T}u|^2\big)\dd x+\frac{2m}{\tau}\int_{\Sigma_+\cap \cA_T}|\chi_{0,T}(u_+-u_-)|^2\dd \mathcal{H} 
\\
&+\int_{\Sg_+\cap\cA_T}\frac{\kp}{2}\big(|\chi_{0,T}u_+|^2-|\chi_{0,T}u_-|^2\big)\dd \mathcal{H}
\\
&+\int_{\cB_T}\big(|\nabla(\chi_{1,T} u )|^2-V_T|\chi_{1,T} u|^2\big)\dd x+\frac{2m}{\tau}\int_{\Sigma_+\cap \cB_T}|\chi_{1,T}(u_+-u_-)|^2\dd \mathcal{H} .
\end{split}
\end{align*}
Now let us introduce the forms
\begin{align*}
\begin{split}
\frq_{\cA_T}[u]&:=\int_{\cA_T}\big(|\nabla u |^2-V_T|u|^2\big)\dd x+\frac{2m}{\tau}\int_{\Sigma_+\cap \cA_T}|u_+-u_-|^2\dd \mathcal{H} +\int_{\Sg_+\cap\cA_T}\frac{\kp}{2}\big(|u_+|^2-|u_-|^2\big)\dd \mathcal{H},
\\
\dom\frq_{\cA_{T}}&:=\{u\in H^1(\cA_T\setminus\Sigma_+; \mathbb{C}^2)\colon u(\cdot,2T)=0,\ u_-|_{\Sigma_{+}}=M_\tau u_+|_{\Sigma_{+}}\},
\\
\frq_{\cB_T}[u]&:=\int_{\cB_T}\big(|\nabla u |^2-V_T|u|^2\big)\dd x+\frac{2m}{\tau}\int_{\Sigma_+\cap \cB_T}|u_+-u_-|^2\dd \mathcal{H},
\\
\dom\frq_{\cB_{T}}&:=\{u\in H^1(\cB_T\setminus\Sigma_+; \mathbb{C}^2)\colon u(\cdot,T)=0,\ u_-|_{\Sigma_{+}}=M_\tau u_+|_{\Sigma_{+}}\}.
\end{split}
\end{align*}
For $u\in\dom\fra_{\rm N}^+$ it follows that $\chi_{0,T} u\in\dom\frq_{\cA_{T}}$ and $\chi_{1,T} u\in\dom\frq_{\cB_{T}}$, so in particular one has for any $n\in\NN$
\begin{align*}
\begin{split}
\lambda_n(\fra^+_{\rm N})&=\inf_{\substack{V\subset\dom\fra^+_{\rm N}\\ \mathrm{dim}V=n}}\sup_{\substack{u\in V\\u\neq 0}}\frac{\fra_{\rm N}^+[u]}{\|u\|_{L^2(\R^2_+)}^2}
\\
&=\inf_{\substack{V\subset\dom\fra^+_{\rm N}\\ \mathrm{dim}V=n}}\sup_{\substack{u\in V\\u\neq 0}}\frac{\frq_{\cA_T}[\chi_{0,T} u]+\frq_{\cB_T}[\chi_{1,T} u]}{\|\chi_{0,T} u\|_{L^2(\cA_T)}^2+\|\chi_{1,T} u\|_{L^2(\cB_T)}^2}
\\
&=\inf_{\substack{V\subset\{(\chi_{0,T}u,\chi_{1,T}u)\colon u\in\dom\fra^+_{\rm N}\}\\ \mathrm{dim}V=n}}\sup_{\substack{(u_0,u_1)\in V\\(u_0,u_1)\neq 0}}\frac{\frq_{\cA_T}[u_0]+\frq_{\cB_T}[u_1]}{\|u_0\|_{L^2(\cA_T)}^2+\|u_1\|_{L^2(\cB_T)}^2}
\\
&\ge\inf_{\substack{V\subset\dom\frq_{\cA_T}\oplus\dom\frq_{\cB_T}\\ \mathrm{dim}V=n}}\sup_{\substack{(u_0,u_1)\in V\\(u_0,u_1)\neq 0}}\frac{\frq_{\cA_T}[u_0]+\frq_{\cB_T}[u_1]}{\|u_0\|_{L^2(\cA_T)}^2+\|u_1\|_{L^2(\cB_T)}^2}=\lambda_n(\frq_{\cA_T}\oplus \frq_{\cB_T}).
\end{split}
\end{align*}
But this again implies for any $\lambda\in\RR$
\begin{equation*}
N(\fra^+_{\rm N},\lambda)\le N(\frq_{\cA_T}\oplus \frq_{\cB_T},\lambda)=N(\frq_{\cA_T},\lambda)+N(\frq_{\cB_T},\lambda).
\end{equation*}
We analyze $\frq_{\cA_T}$ first and partition it once again. For that purpose we set 
\[
\begin{aligned}
	\cA_{0,T}:=\cA_T\cap\{x_1&<-2T\tan\omega\},\qquad 
	\cA_{1,T}:=\cA_T\cap\{|x_1| < 2T\tan\omega\},\\
	&\cA_{2,T}:=\cA_{T}\cap\{x_1>2T\tan\omega\}.
\end{aligned}	
\]
	\begin{figure}[h]
\includegraphics[width=12cm, keepaspectratio]{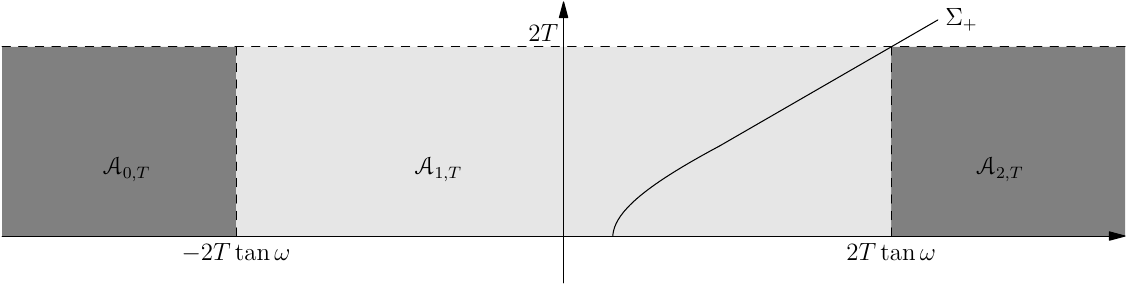}
	\caption{Schematic picture of the sets $\cA_{0,T}$, $\cA_{1,T}$, and $\cA_{2,T}$. The regions $\cA_{0,T}$ and $\cA_{2,T}$ are depicted in gray and $\cA_{1,T}$ in light gray.}
	\end{figure}
We choose $T > 0$ large enough so that $\cA_{0,T}\cap\Sg_+ =\varnothing$ and $\cA_{2,T}\cap\Sg_+ = \varnothing$ 
and introduce the quadratic forms
\begin{align*}
\frq_{\cA_{j,T}}[u]&:=\int_{\cA_{j,T}}\big(|\nabla u|^2-V_T|u|^2\big)\dd x,\quad \dom\frq_{\cA_{j,T}}:=\{u\in H^1(\cA_{j,T}; \mathbb{C}^2)\colon u(\cdot,2T)=0\},
\end{align*}
$ j\in\{0,2\}$, and
\[
\begin{aligned}
&\frq_{\cA_{1,T}}[u]:=\int_{\cA_{1,T}}\big(|\nabla u|^2-V_T|u|^2\big)\dd x\!+\!\frac{2m}{\tau}\int_{\Sigma_+\cap \cA_T}|u_+-u_-|^2\dd \mathcal{H}\! +\!\int_{\Sg_+\cap\cA_T}\frac{\kp}{2}\big(|u_+|^2-|u_-|^2\big)\dd \mathcal{H},\\
&\dom\frq_{\cA_{1,T}} :=\{u\in H^1(\cA_{1,T}\setminus\Sg_+; \mathbb{C}^2)\colon u(\cdot,2T)=0, u_-|_{\Sigma_{+}\cap\cA_T}=M_\tau u_+|_{\Sg_+\cap\cA_T}\}.
\end{aligned}
\]
By once again applying the min-max principle we have for any $n\in\NN$
that $\lambda_n(\frq_{\cA_T})\ge\lambda_n(\frq_{\cA_{0,T}}\oplus \frq_{\cA_{1,T}}\oplus \frq_{A_{2,T}})$. Hence for any $\lambda\in\RR$ we get
\begin{equation*}
N(\frq_{\cA_T},\lambda)\le N(\frq_{\cA_{0,T}}\oplus \frq_{\cA_{1,T}}\oplus \frq_{\cA_{2,T}},\lambda)=N(\frq_{\cA_{0,T}},\lambda)+N(\frq_{\cA_{1,T}},\lambda)+N(\frq_{\cA_{2,T}},\lambda).
\end{equation*}

\noindent \emph{Step 2:} We show the finiteness of $N(\frq_{\cA_T},\lm_\tau)$.
First, as $\frq_{\cA_{1,T}}$ is semibounded from below and closed, one has
\begin{equation*}
N(\frq_{\cA_{1,T}},\lambda)<\infty
\end{equation*}
for any $\lambda\in\RR$, since the operator associated to $\frq_{\cA_{1,T}}$ has compact resolvent and is semibounded from below. Also one can easily see that 
\begin{equation*}
\frq_{\cA_{0,T}}=\frt_{\rm N}\otimes \fri+ \fri\otimes \frt_{\rm DN},
\end{equation*}
where $\fri$ is the form of an identity operator, $\frt_{\rm N}$ corresponds to the one-dimensional Neumann Laplacian on the interval $(-\infty,-2T\tan\omg)$ and $\frt_{\rm DN}[u]:=\int_0^{2T}(|u'|^2-V_T(0,t)|u|^2)\dd t$ with $\dom\frt_{\rm DN}:=\{u\in H^1(0,2T):u(2T)=0\}$ corresponding to a one-dimensional Schr\"odinger operator on the interval $(0,2T)$ with Neumann boundary condition at $t = 0$ and Dirichlet boundary condition at $t = 2T$; here it is also used that $V_T$ is independent of $x_1$. In particular, we get $\frt_{\rm N}\ge 0$ and $\frt_{\rm DN}\ge -\|V_T\|_\infty$. Using \eqref{potential}, one can see that $\|V_T\|_\infty\le C/T^2$, with $C :=\|\chi_0'\|_\infty^2+\|\chi_1'\|_\infty^2$. Note that $\lambda_\tau < 0$. Thus we can choose $T$ big enough such that $-\|V_T\|_\infty>\lambda_\tau$. By applying the min-max principle to $\frq_{\cA_{0,T}}$, we then get 
\begin{equation*}
 N(\frq_{\cA_{0,T}},\lambda_\tau)=0.
\end{equation*}
The last quadratic form can be written as $\frq_{\cA_{2,T}}=\widetilde{\frt}_{\rm N}\otimes \fri+ 
\fri\otimes \frt_{\rm DN}$, where $\widetilde{\frt}_{\rm N}$ is the one-dimensional Neumann-Laplacian on the interval $(2T\tan\omega,\infty)$, \ie $\widetilde{\frt}_{\rm N}\ge 0$. Arguing as for the last quadratic form, we also get 
\begin{equation*}
N(\frq_{\cA_{2,T}},\lambda_\tau)=0.
\end{equation*}
All in all we get 
\begin{equation}\label{N0}
N(\frq_{\cA_T},\lambda_\tau)\le N(\frq_{\cA_{0,T}}\oplus \frq_{\cA_{1,T}}\oplus \frq_{\cA_{2,T}},\lambda_\tau)<\infty.
\end{equation}

\noindent \emph{Step 3:}
Now we want to show the finiteness of $N(\frq_{\cB_T},\lambda_\tau)$. For that purpose we
introduce the line $\widetilde{\Gamma}:=\{(x_1,x_2)\in\RR^2:x_2=\cot(\omega)x_1\}$, which splits the plane into two domains $\widetilde{\Omega}_\pm$ with common boundary $\widetilde{\Gamma}$ satisfying $\widetilde{\Omega}_\pm \cap \cB_T = \Omega_\pm \cap \cB_T$. In the following, we write for $u\in H^1(\RR^2\setminus\widetilde{\Gamma}; \mathbb{C}^2)$ also $u_\pm = u|_{\widetilde{\Omega}_\pm}$.
Define the quadratic form
\[
\begin{aligned}
\widetilde{\frq}_T[u]&:=\int_{\R^2}\big(|\nabla u|^2-V_T|u|^2\big)\dd x+\frac{2m}{\tau}\int_{\widetilde{\Gamma}}|u_+-u_-|^2\dd \mathcal{H},\\
\dom\widetilde{\frq}_T&:=\Bigl\{u\in H^1(\RR^2\setminus\widetilde{\Gamma}; \mathbb{C}^2)\colon u_-|_{\widetilde{\Gamma}}=\widetilde M u_+|_{\widetilde{\Gamma}}\Bigr\},\\
&\text{where}\quad\widetilde M := \frac{1}{1-\frac{1}{4}\tau^2}\begin{pmatrix}
	1+\frac14\tau^2 & -\ii e^{\ii\omega}\tau\\
	\ii e^{-\ii\omega}\tau & 1+\frac14\tau^2
	\end{pmatrix},
\end{aligned}
\]
where the potential $V_T$ in~\eqref{potential} is extended by zero to the lower half-plane. Let $u\in \dom\frq_{\cB_T}$.
Then one has $\widetilde{\frq}_T[\widetilde u]=\frq_{\cB_T}[u]$, where $\widetilde u$ is an extension of $u$ by $0$ to $\RR^2$. Therefore, by the min-max principle $\lambda_n(\frq_{\cB_T})\ge\lambda_n(\widetilde \frq_T)$ for any $n\in\NN$, and hence 
\begin{equation}\label{N1}
N(\frq_{\cB_T},\lambda_\tau)\le N(\widetilde \frq_T,\lambda_\tau).
\end{equation}
For convenience, we make a rotation so that the curve integral in $\widetilde{\frq}_T$ is an integral over the $x_2$-axis. Recall that we use the notation
\begin{equation*}
  \Gamma_0 = \{ (0, x_2): x_2 \in \mathbb{R} \}, \quad \check{\Omega}_\pm := \{ (x_1,x_2)^\top \in \mathbb{R}^2: \pm x_1 \geq 0 \},
\end{equation*}
and we write for $u \in H^1(\mathbb{R}^2 \setminus \Gamma_0; \mathbb{C}^d)$, $d \in \mathbb{N}$,
\begin{equation*}
  u_\pm := u |_{\check{\Omega}_\pm}.
\end{equation*}
Define the form
\[
\begin{aligned}
\frq_T[u]&:=\int_{\RR^2}\big(|\nabla u|^2-\widehat V_T |u|^2\big)\dd x+\frac{2m}{\tau}\int_{\RR}|u_+(0,x_2)-u_-(0,x_2)|^2\dd x_2,
\\
\dom\frq_T&:=\big\{u\in H^1(\RR^2 \setminus \Gamma_0; \mathbb{C}^2)\colon u_-|_{\Gamma_0}=\widetilde M u_+|_{\Gamma_0}\big\},
\end{aligned}
\]
where the potential $\widehat V_T$ is defined by
\begin{equation}\label{eq:whVT}
\widehat V_T(x_1,x_2):=V_T(x_1\cos(\omega)+x_2\sin(\omega), x_2\cos(\omega)-x_1\sin(\omega)).
\end{equation}
A direct calculation shows that $\frq_T[u]=\widetilde{\frq}_T[u\circ O]$ for any $u\in\dom\frq_T$, where $O$ is a rotation by $\omg$ in $\RR^2$ in the counterclockwise direction.
Since rotations induce unitary transformations, we have 
\begin{equation}\label{N2}
N(\frq_T,\lambda_\tau)=N(\widetilde \frq_{T},\lambda_\tau).
\end{equation}
Next, we rewrite $\frq_T$ as the sum of two forms acting on the components $u_1$ and $u_2$ of $u = (u_1,u_2)^\top \in \dom \frq_T$. For that purpose, we define the unitary matrix
\begin{equation*}
\Theta=\frac{1}{\sqrt{2}} \begin{pmatrix} 1 & \ii e^{\ii \omega} \\ \ii e^{-\ii \omega} & 1 \end{pmatrix},
\end{equation*}
inducing a unitary operator in $L^2(\mathbb{R}^2; \mathbb{C}^2)$ defined by $u\mapsto \Theta u$ that is again denoted by $\Theta$. By a direct computation one gets that
\begin{equation} \label{M_diagonalization}
  \Theta^* \widetilde{M} \Theta = \begin{pmatrix}
\frac{2+\tau}{2-\tau} &0\\ 0 &\frac{2-\tau}{2+\tau}
\end{pmatrix} =: M.
\end{equation}
Define for $\mu \in \mathbb{R} \setminus \{0, \pm 2 \}$ in $L^2(\mathbb{R}^2; \mathbb{C})$ the quadratic form
\begin{equation*}
  \begin{split}
    \check{\frq}_{T, \mu}[u] &= \int_{\RR^2}\big(|\nabla u|^2-\widehat V_T |u|^2\big)\dd x-\frac{2m}{|\mu|}\int_{\RR}|u_+(0,x_2)-u_-(0,x_2)|^2\dd x_2, \\
    \dom \check{\frq}_{T, \mu} &= \left\{ u \in H^1(\mathbb{R}^2 \setminus \Gamma_0; \mathbb{C}): u_-|_{\Gamma_0} = \frac{2-\mu}{2+\mu} u_+|_{\Gamma_0} \right\}.
  \end{split}
\end{equation*}
Taking~\eqref{M_diagonalization} into account, one finds for $u = (u_1, u_2)^\top \in L^2(\mathbb{R}^2; \mathbb{C}^2)$ that $\Theta u \in \dom \frq_T$ if and only if $u_1 \in \dom \check{\frq}_{T, -\tau}$ and $u_2 \in \dom \check{\frq}_{T, \tau}$ and in this case
\begin{equation*}
  \frq_T[\Theta u] = \check{\frq}_{T, -\tau}[u_1] + \check{\frq}_{T, \tau}[u_2].
\end{equation*}
Since $\Theta$ is unitary, this yields
\begin{equation}\label{N3}
N(\frq_T,\lambda_\tau)=N(\check{\frq}_{T, \tau},\lambda_\tau) + N(\check{\frq}_{T, -\tau},\lambda_\tau).
\end{equation}
We are going to prove that $N(\check{\frq}_{T, \tau},\lambda_\tau) < \infty$, the statement $N(\check{\frq}_{T, -\tau},\lambda_\tau) < \infty$ can be shown in the same way.

Recall that $\Pi_\tau$ and $\mathbb{P}_\tau$ are defined by~\eqref{def_projections1} and~\eqref{def_projections2}, respectively. 
By Lemma~\ref{lemma_projections}~(i) one has for any $u\in\dom\check\frq_{T, \tau}$ that $\Pi_\tau u \in \dom\check\frq_{T, \tau}$, implying that also $\mathbb{P}_\tau u \in \dom\check\frq_{T, \tau}$. Moreover,
\begin{equation}\label{proj}
\begin{aligned}
\check{\frq}_{T, \tau}[u]&=\check{\frq}_{T, \tau}[\Pi_\tau u + \mathbb{P}_\tau u]
=\check{\frq}_{T, \tau}[\Pi_\tau u]+\check{\frq}_{T, \tau}[\mathbb{P}_\tau u]-2 \Re\left(\int_{\RR^2}\widehat{V}_T \Pi_\tau u \cdot  \overline{\mathbb{P}_\tau u} \dd x\right),
\end{aligned}
\end{equation}
where we used, with $\Phi_\tau$ and $\psi_{\tau,u}$ defined by~\eqref{def_Phi_tau} and~\eqref{def_psi_tau}, respectively,
\begin{align*}
\begin{split}
\int_{\RR^2}& \nabla(\Pi_\tau  u) \cdot \overline{\nabla(\mathbb{P}_\tau u)} \dd x+\frac{2m}{\tau}\int_{\RR} ((\Pi_\tau  u)_+-(\Pi_\tau  u)_-)(0,x_2) \cdot \overline{((\mathbb{P}_\tau u)_+-(\mathbb{P}_\tau u)_-)(0,x_2)}\dd x_2
\\
=&\int_{\RR}\psi_{\tau,u}(x_2)\left[\int_{\RR} \Phi_\tau' \cdot \overline{\partial_1\mathbb{P}_\tau u} \dd x_1+\frac{2m}{\tau} \big(\Phi_\tau(0^+)-\Phi_{\tau}(0^-)\big) \cdot \overline{\big((\mathbb{P}_\tau u)_+-(\mathbb{P}_\tau u)_-\big)(0,x_2)} \right] \dd x_2
\\
&\quad+\int_{\RR^2} \Phi_\tau(x_1) \psi_{\tau,u}'(x_2) \cdot \overline{\partial_2\mathbb{P}_\tau u(x)} \dd x
\\
=&\int_{\RR}E_1(\sfP_\tau) \int_{\mathbb{R}} \Phi_\tau(x_1) \psi_{\tau,u}(x_2) \cdot \overline{\mathbb{P}_\tau u(x_1,x_2)} \dd x_1 \dd x_2 + \int_{\RR^2} \Phi_\tau(x_1) \psi_{\tau,u}'(x_2) \cdot \overline{\partial_2\mathbb{P}_\tau u(x)} \dd x
\\
=&\int_{\RR^2}\Big(\Phi_\tau(x_1) \psi_{\tau,u}'(x_2) \cdot \overline{\partial_2 u}-|\Phi_\tau(x_1) \psi_{\tau,u}'(x_2)|^2\Big)\dd x
\\
=&\int_{\RR}\psi_{\tau,u}'(x_2)\partial_2\left(\int_{\RR} \Phi_\tau(x_1)  \cdot \overline{u(x_1,x_2)} \dd x_1\right) \dd x_2
-\|\psi_{\tau,u}'\|^2_{L^2(\RR; \CC)}=0;
\end{split}
\end{align*}
here, the second equality holds by Lemma~\ref{lemma_1D}, the third one by the orthogonality of $\Phi_\tau \psi_{\tau,u}(x_2)$ and $\mathbb{P}_\tau u(\cdot,x_2)$ from Lemma~\ref{lemma_projections}~(ii), and the definition of $\mathbb{P}_\tau$, and the fourth equality by the fact that $\Phi_\tau$ has norm $1$.
The first summand in \eqref{proj} can be written as
\begin{align*}
\check{\frq}_{T, \tau}[\Pi_\tau u]=
&\int_{\RR^2}
\Big(|\Phi_\tau'(x_1)\psi_{\tau,u}(x_2)|^2
+
|\Phi_\tau(x_1)\psi_{\tau,u}'(x_2)|^2\Big)\dd x
\\
&+\frac{2m}{\tau}
\int_{\RR}|(\Phi_\tau(0^+)-\Phi_\tau(0^-))\psi_{\tau,u}(x_2)|^2\dd x_2-
\int_{\RR^2}\widehat{V}_T|\Pi_\tau u|^2\dd x
\\
=&\int_{\RR} |\psi_{\tau,u}(x_2)|^2\left[\int_{\RR} |\Phi_\tau'(x_1)|^2\dd x_1+\frac{2m}{\tau}|\Phi_\tau(0^+)-\Phi_\tau(0^-)|^2\right]\dd x_2
\\
&+\int_{\RR}|\psi_{\tau,u}'(x_2)|^2\dd x_2-\int_{\RR}|\psi_{\tau,u}(x_2)|^2\int_{\RR}\widehat{V}_T|\Phi_\tau(x_1)|^2\dd x_1\dd x_2
\\
=&\int_{\RR}\left(|\psi_{\tau,u}'(x_2)|^2+E_1(\sfP_\tau)|
\psi_{\tau,u}(x_2)|^2-|\psi_{\tau,u}(x_2)|^2\int_{\RR}\widehat{V}_T|\Phi_\tau(x_1)|^2\dd x_1\right)\dd x_2,
\end{align*}
where in the last step it was used that $\Phi_\tau$ is a normalized eigenfunction of the operator $\sfP_\tau$ from Lemma~\ref{lemma_1D}.
Furthermore, using Lemma~\ref{lemma_projections}~(iii) and $\| \widehat{V}_T \|_\infty \leq C/T^2$ the second summand in \eqref{proj} can be estimated by
\begin{align*}
\check{\frq}_{T, \tau}[\mathbb{P}_\tau u]=&\int_{\RR^2}|\partial_2 \mathbb{P}_\tau u|^2\dd x-\int_{\RR^2}\widehat{V}_T|\mathbb{P}_\tau u|^2\dd x
\\
&+\int_{\RR}\left[\int_{\RR}|\partial_1 (\mathbb{P}_\tau u)(x_1,x_2)|^2\dd x_1+\frac{2m}{\tau}|(\mathbb{P}_\tau u)(0^+,x_2)-(\mathbb{P}_\tau u)(0^-,x_2)|^2\right]\dd x_2
\\
\ge&-\frac{C}{T^2}\|\mathbb{P}_\tau u\|^2_{L^2(\RR^2; \CC)}.
\end{align*}
To estimate the last summand in \eqref{proj} we use a Young-type inequality, namely $2ab\le\eps a^2+b^2/\eps$ for any $a,b\in\RR$ and any $\eps>0$. So in particular
\begin{equation*}
2\left|\int_{\RR^2} \widehat{V}_T \Pi_\tau u \cdot \overline{\mathbb{P}_\tau u}\dd x\right|\le\eps\|\mathbb{P}_\tau u\|^2_{L^2(\RR^2; \CC)}+\frac{1}{\eps}\|\widehat{V}_T\Pi_\tau u\|^2_{L^2(\RR^2; \CC)}.
\end{equation*}
Combining the above calculations we get
\begin{align*}
\check{\frq}_{T, \tau}[u]\ge&\int_{\RR}\left(|\psi_{\tau,u}'(x_2)|^2+E_1(\sfP_\tau)|
\psi_{\tau,u}(x_2)|^2-|\psi_{\tau,u}(x_2)|^2\int_{\RR}\widehat{V}_T(x_1, x_2)|\Phi_\tau(x_1)|^2\dd x_1\right)\dd x_2
\\
&-\frac{C}{T^2}\|\mathbb{P}_\tau u\|^2_{L^2(\RR^2; \CC)}-\eps \|\mathbb{P}_\tau u\|^2_{L^2(\RR^2; \CC)}-\frac{1}{\eps}\|\widehat{V}_T\Pi_\tau u\|^2_{L^2(\RR^2; \CC)}
\\
=&\int_{\RR}\left(|\psi_{\tau,u}'(x_2)|^2-|\psi_{\tau,u}(x_2)|^2\int_{\RR}|\Phi_\tau(x_1)|^2\left(\widehat{V}_T(x_1,x_2)+\frac{1}{\eps}|\widehat{V}_T(x_1,x_2)|^2\right)\dd x_1\right)\dd x_2
\\
&+ E_1(\sfP_\tau)\|u\|^2_{L^2(\RR^2; \CC)} - \left(\eps+\frac{C}{T^2}+ E_1(\sfP_\tau) \right) \| \mathbb{P}_\tau u \|^2_{L^2(\RR^2; \CC)}
\\
\ge&\int_{\RR}\left(|\psi_{\tau,u}'(x_2)|^2-|\psi_{\tau,u}(x_2)|^2\int_{\RR}|\Phi_\tau(x_1)|^2\left(\widehat{V}_T(x_1,x_2)+\frac{1}{\eps}|\widehat{V}_T(x_1,x_2)|^2\right)\dd x_1\right)\dd x_2
\\
&+ E_1(\sfP_\tau)\|u\|^2_{L^2(\RR^2; \CC)},
\end{align*} 
where we used $\|\Pi_\tau u\|^2_{L^2(\RR^2; \CC)}=\|\psi_{\tau,u}\|^2_{L^2(\RR; \CC)}$ and the identity  $\|\Pi_\tau u\|^2_{L^2(\RR^2; \CC)} + \|\mathbb{P}_\tau u\|^2_{L^2(\RR^2; \CC)} = \|u\|^2_{L^2(\RR^2; \CC)}$, and we also chose $\eps,T>0$ such that the inequality $\eps+C/T^2\le |E_1(\sfP_\tau)|$ holds. Next, define
\begin{align*}
Z_{T}(x_2):=&\int_{\RR}|\Phi_\tau(x_1)|^2\left(\widehat{V}_T(x_1,x_2)+\frac{1}{\eps}|\widehat{V}_T(x_1,x_2)|^2\right)\dd x_1,
\\
\frb_{T}[\psi]:=&\int_{\RR}\big(|\psi'|^2-Z_{T}|\psi|^2\big)\dd t,\qquad \dom\frb_{T}:=H^1(\RR; \CC). 
\end{align*}
As $E_1(\sfP_\tau)= \lambda_\tau$, one has
\begin{equation} \label{N35}
N(\check{\frq}_{T, \tau},\lambda_\tau)\le N(\frb_{T},0).
\end{equation}
According to a Bargmann type estimate, see~\cite[Theorem XIII.9 (a)]{RSIV} (for $\ell = 0$) and \cite[Equation (8)]{simon}, one has
\begin{equation}\label{N4}
N(\frb_{T},0)\le 2+\int_{\RR}|x_2| Z_{T}(x_2)\dd x_2,
\end{equation}
where we implicitly used that $Z_T\ge 0$ which follows from~\eqref{potential}
	combined with~\eqref{eq:whVT}.
So we need to show the finiteness of the latter integral. But this can be seen as follows
\begin{align*}
\int_{\RR}|x_2| Z_{T}(x_2)\dd x_2=&\int_{\RR}|x_2|
\int_{\RR}|\Phi_\tau(x_1)|^2\left(\widehat{V}_T(x_1,x_2)+\frac{1}{\eps}|\widehat{V}_T(x_1,x_2)|^2\right)\dd x_1\dd x_2
\\
=&\int_{\RR}|\Phi_\tau(x_1)|^2\int_{\RR}|x_2|\left(\widehat{V}_T(x_1,x_2)+\frac{|\widehat{V}_T(x_1,x_2)|^2}{\eps}\right)\dd x_2\dd x_1
\\
=&\int_{\RR}|\Phi_\tau(x_1)|^2\Bigg(\int_{\frac{T+x_1\sin\omega}{\cos\omega}}^{\frac{2T+x_1\sin\omega}{\cos\omega}}|x_2|\left(\widehat{V}_T(x_1,x_2)+\frac{1}{\eps}|\widehat{V}_T(x_1,x_2)|^2\right)\dd x_2  \Bigg) \dd x_1\\
&\le
\frac{1}{\cos \omg}
\left(\frac{C}{T^2}+\frac{C^2}{ T^4\eps}\right)	
\int_{\RR}|\Phi_\tau(x_1)|^2\big(2T+|x_1|\sin\omega\big)\left(\int_{\frac{T+x_1\sin\omg}{\cos\omg}}^{\frac{2T+x_1\sin\omg}{\cos \omg}}\dd x_2\right)\dd x_1
\\
\le&\frac{C}{T(\cos\omega)^2}\left(1+\frac{C}{T^2\eps}\right)\int_{\RR}|\Phi_\tau(x_1)|^2\big(2T+|x_1|\sin\omega\big)\dd x_1<\infty.
\end{align*}
For the last inequality we used that $|\Phi_\tau(x_1)|\leq C'\exp(-c|x_1|)$ with some constants $C',c > 0$. Now we can finally combine \eqref{N1}, \eqref{N2}, \eqref{N3}, \eqref{N35}, and \eqref{N4} to get
\begin{equation}\label{N5}
N(\frq_{\cB_T},\lambda_\tau)<\infty.
\end{equation}

\noindent\emph{Step 4:} In this step, we use the results from the previous steps to conclude the claims of the theorem.
Since $N(\frq_{\cB_T},\lambda_\tau)<\infty$ and $N(\frq_{\cA_T},\lambda_\tau)<\infty$ according to \eqref{N0} and \eqref{N5} we also have 
\begin{equation*}
	N(\fra_{\rm N}^+,\lambda_\tau)<\infty.
\end{equation*}
To show the finiteness of $N(\fra_{\rm N}^-,\lambda_\tau)$ one follows the same steps as for $\fra_{\rm N}^+$. Therefore, by \eqref{N-1} one has $N(\fra_{\rm N},\lambda_\tau)<\infty$ and by \eqref{N-2} one concludes
\begin{equation*}
	N(\fra,\lambda_\tau)<\infty.
\end{equation*}
Since $\frd_{\tau,m}$ is $\fra$ shifted by $m^2$ we have
\begin{equation*}
N(\frd_{\tau,m},\lambda_\tau+m^2)<\infty.
\end{equation*}
Also remark that 
\begin{equation*}
\lambda_\tau+m^2= m^2\left(\frac{\tau^2-4}{\tau^2+4}\right)^2.
\end{equation*}
By combining the last two displayed formulas, one gets~\eqref{form_finite}.
Now by the definition of $\frd_{\tau,m}$ we have $\frd_{\tau,m}[u]=\|\sfD_{\tau,m}u\|^2_{L^2(\RR^2)}$
and the domains of the self-adjoint operator $\sfD_{\tau,m}$ and
the quadratic form $\frd_{\tau,m}$ coincide. Thus, $\sfD_{\tau,m}^2$ represents the form $\frd_{\tau,m}$ by the first representation theorem. In view of Proposition~\ref{essentials} combined with the spectral theorem
the number of eigenvalues of the operator $\sfD_{\tau,m}$ in the gap of its essential spectrum with multiplicities taken into account coincides with $N(\frd_{\tau,m},\lm_\tau+m^2)$ and thus, this number is finite.
\end{proof}

\section{Existence of eigenvalues in the gap}
\label{sec:existence}
In this section, we provide a sufficient condition for Dirac operators with a Lorentz scalar $\dl$-shell interaction supported 
on a class of convex curves as in Hypothesis~\ref{hyp} to have non-empty discrete spectrum in the gap of the essential spectrum. For that purpose, we compute the representation of the quadratic form in Proposition~\ref{essentials} in parallel coordinates. In the following, we suppress the target space $\CC^2$ in the notation for the $L^2$-spaces. The coordinate transformation $\cU$ is understood to act componentwise on $u=(u_1,u_2)^\top$. We introduce the unitary transformation
\[
\cU\colon L^2(\dR^2)\to L^2(U;(1-\kp(s)t)\dd s\dd t),\qquad \mathcal{U}u:= u\circ\Phi,
\]
where we used that $|\det \sfJ_\Phi(s,t)| = 1-\kp(s)t$ for all $(s,t)\in U$, which follows from~\eqref{eq:detJacobian}
combined with the definition of $U$ in~\eqref{eq:U} and the bound~\eqref{eq:boundc(s)}. We also implicitly use in the definition of $\cU$ that the mapping $\Phi$ in~\eqref{eq:Phi} is a diffeomorphism from $U$ onto $\dR^2\sm{\rm Cut}\,(\Sg)$, where ${\rm Cut}\,(\Sg)$ has zero Lebesgue measure.
In the next lemma we compute how the quadratic form $\frd_{\tau,m}$ in Proposition\,\ref{essentials}\,(iii) changes under the unitary transform $\cU$. 

\begin{lem}\label{shifted quad}
	Let $\Sg\subset\dR^2$ be a curve as in Hypothesis~\ref{hyp}. Then, for all $u\in \dom\sfD_{\tau,m}$ we have
	\begin{equation}\label{quad trafo}
		\frd_{\tau,m}[u]-m^2\left(\frac{\tau^2-4}{\tau^2+4}\right)^2\|u\|^2_{L^2(\RR^2,\CC^2)}= Q_1[v]+Q_2[v]=:Q[v]
	\end{equation}
	with $v:= \mathcal{U}u$ and
	\[
	\begin{aligned}
		Q_1[v] &:= \int_\dR
		\int_{-\infty}^{c(s)}\frac{|\p_s v|^2}{1-\kp(s)t}\dd t 
		\dd s,\\
		Q_2[v] &:= \int_\dR\bigg[
		\int_{-\infty}^{c(s)}\left(|\p_t v|^2(1-\kp(s)t)
		+\frac{16m^2\tau^2}{(\tau^2+4)^2}|v(s,t)|^2(1-\kp(s)t)\right)\dd t\\
		&\qquad \qquad + \frac{\kp(s)}{2}\big(|v(s,0^+)|^2-|v(s,0^-)|^2\big)+
		\frac{2m}{\tau}|v(s,0^+)-v(s,0^-)|^2\bigg]\dd s.
	\end{aligned}
	\]
\end{lem}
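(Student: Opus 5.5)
The plan is to start from the formula for $\frd_{\tau,m}[u]$ in Proposition~\ref{essentials}\,(iii) and rewrite each term in the parallel coordinates $(s,t)$ given by $\Phi$ on $U$. Three ingredients are needed: the change of variables formula with Jacobian $|\det\sfJ_\Phi|=1-\kp(s)t$, the transformation of the gradient, and the parametrization of the curve integrals by arc length.

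For the mass term, note the identity
\[
m^2-m^2\Big(\tfrac{\tau^2-4}{\tau^2+4}\Big)^2=\tfrac{16m^2\tau^2}{(\tau^2+4)^2}.
\]
Since ${\rm Cut}\,(\Sg)$ has measure zero and $\Phi\colon U\to\dR^2\sm{\rm Cut}\,(\Sg)$ is a diffeomorphism, we get
\[
\Big(m^2-m^2\big(\tfrac{\tau^2-4}{\tau^2+4}\big)^2\Big)\|u\|^2=\tfrac{16m^2\tau^2}{(\tau^2+4)^2}\int_\dR\int_{-\infty}^{c(s)}|v|^2(1-\kp t)\dd t\dd s.
\]

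For the boundary terms, $\Sg=\{\Phi(s,0)\}$ with $\dd\mathcal H=\dd s$. Since $\nu$ points into $\Omg_+$, the side $t>0$ corresponds to $\Omg_+$, so $u_\pm|_\Sg(\gamma(s))=v(s,0^\pm)$. The curvature term and the $\frac{2m}{\tau}$ term then transform directly into the terms displayed in $Q_2$.

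For the gradient term, differentiate $v=u\circ\Phi$:
\[
\p_s v=(1-\kp t)\,\gamma'\cdot\nb u,\qquad \p_t v=\nu\cdot\nb u.
\]
Here $\p_s\Phi=\gamma'+t\nu'=(1-\kp t)\gamma'$, because $\nu'=-\kp\gamma'$ follows from $\nu=(\gamma_2',-\gamma_1')$ and the Frenet formula. Since $\{\gamma',\nu\}$ is orthonormal,
\[
|\nb u|^2=\frac{|\p_s v|^2}{(1-\kp t)^2}+|\p_t v|^2 .
\]
Multiplying by the Jacobian $1-\kp t$ yields the integrand of $Q_1$ together with the $|\p_t v|^2(1-\kp t)$ term in $Q_2$. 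Integrability over $\dR^2\sm\Sg$ is preserved, since we only integrate over $U\sm\{t=0\}$ and the cut locus is a null set.

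The main obstacle is justification, not algebra. At points with $\kp(s)c(s)=1$ the weight $1-\kp t$ degenerates, so the identity must be read with integrands that are merely nonnegative measurable; by Tonelli this is unproblematic because every term is finite on the $\dR^2$ side. The other point to check is that the chain rule holds for $H^1$ functions on $\Phi(U\cap\{t\gtrless0\})$. This follows since $\Phi$ is a smooth diffeomorphism there, and we can approximate $u_\pm$ by functions smooth up to $\Sg$ on compact pieces and pass to the limit by monotone convergence. Summing the three transformed pieces gives \eqref{quad trafo}.
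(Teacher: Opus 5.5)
Your proposal is correct and follows essentially the same route as the paper: a change of variables with weight $1-\kp(s)t$ on $U$ (using that ${\rm Cut}\,(\Sg)$ is a null set), splitting $|\nb u|^2$ into tangential and normal parts, and an arc-length parametrization of the boundary terms with $u_\pm|_\Sg\circ\gamma = v(\cdot,0^\pm)$. The only difference is cosmetic: you obtain the metric directly from $\p_s\Phi=(1-\kp t)\gamma'$ and $\p_t\Phi=\nu$, whereas the paper multiplies out $\sfJ_\Phi^{-1}(\sfJ_\Phi^{-1})^\top=\mathrm{diag}\big((1-\kp t)^{-2},1\big)$, and your remarks on the $H^1$ chain rule and Tonelli supply justification that the paper leaves implicit.
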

\begin{proof}
	We are going to represent the terms $\int_{\dR^2}|u|^2\dd x$ and $\int_{\dR^2}|\nb u|^2\dd x$ as well as the boundary terms in~\eqref{equation_form_square} in parallel coordinates. Changing the variables in the integral, we get
	\[
	\int_{\R^2}|u|^2\dd x=\int_U |(u\circ\Phi)(s,t)|^2 (1-\kappa(s) t)\dd s\dd t=\int_U |v(s,t)|^2 (1-\kappa(s) t)\dd s\dd t.
	\]
	Next, using $|\gamma'(s)| = 1$ for all $s \in \mathbb{R}$, we deduce from the formula for the Jacobian~\eqref{eq:J} the following auxiliary identity:
	\[
	\begin{aligned}
		&\sfJ_\Phi(s,t)^{-1}\big(\sfJ_\Phi(s,t)^{-1}\big)^\top\\ &\quad = 
		\frac{1}{(1-\kp(s)t)^2}
		\begin{pmatrix}  -\gamma_1'(s) & 
			-\gamma_2'(s)\\
			-\gamma_2'(s)+t\kp(s)\gamma_2'(s)
			&\gamma_1'(s)-t\kp(s)\gamma_1'(s)
		\end{pmatrix}
		\begin{pmatrix}  -\gamma_1'(s) & 
			-\gamma_2'(s)+t\kp(s)\gamma_2'(s)\\
			-\gamma_2'(s)
			&\gamma_1'(s)-t\kp(s)\gamma_1'(s)
		\end{pmatrix}\\
		&\quad=\frac{1}{(1-\kp(s)t)^2}
		\begin{pmatrix} 1 & 0 \\
			0&(1-\kp(s)t)^2\end{pmatrix}.
	\end{aligned}	
	\]
	Using the above formula and the chain rule for the gradient, we get for the first component $u_1$ of $u$ 
	\begin{align*}
		\int_{\R^2}|\nabla u_1|^2\dd x&=\int_U |((\nabla u_1)\circ\Phi)(s,t)|^2 (1-\kappa(s)t)\dd s\dd t
		\\
		&=\int_U(\nabla(u_1\circ\Phi)(s,t))
		\cdot (\sfJ_\Phi(s,t))^{-1}((\sfJ_\Phi(s,t))^{-1})^\top \ov{(\nabla(u_1\circ\Phi)(s,t))}(1\!-\!\kappa(s)t)\dd s\dd t
		\\
		&=\int_U\frac{|\p_s v_1|^2}{1-\kp(s)t}\dd s \dd t+\int_U |\p_t v_1|^2(1-\kp(s)t) \dd s\dd t,
	\end{align*}
	where $v_1$ denotes the first component of $v$. Analogously, we get the same formula for the second component of $u$. Summing both identities yields
	\begin{equation*}
		\int_{\R^2}|\nabla u|^2\dd x 
		=\int_U\frac{|\p_s v|^2}{1-\kp(s)t}\dd s \dd t+\int_U |\p_t v|^2(1-\kp(s)t) \dd s\dd t.
	\end{equation*}
	For the boundary terms we remark that $\gamma$ is an arc-length
	parametrization of $\Sg$, therefore
	\begin{align*}
		\int_{\Sg}\kappa|u_+|^2\dd \mathcal{H}&=\int_\R\kappa(s) |u_+(\gamma(s))|^2\dd s=\int_\R \kappa(s)|(u_+\circ\Phi) (s,0)|^2 \dd s=\int_\R \kappa(s)|v(s,0^+)|^2\dd s.
	\end{align*}
	The remaining boundary terms transform analogously. Plugging the above results into the formula for $\frd_{\tau,m}$ in Proposition~\ref{essentials}\,(iii) yields \eqref{quad trafo}.
\end{proof}

Recall that, throughout this section, the mass $m > 0$ is fixed.
We are now ready to formulate and prove the second main result of the paper about the discrete spectrum of  the self-adjoint Dirac operator $\sfD_{\tau,m}$ with a Lorentz scalar $\dl$-shell interaction of strength $\tau < 0$
supported on $\Sg$ (defined as in~\eqref{eq:DiracOperator}). 
\begin{thm}\label{thm:disc}
	Let $\Sg\subset\dR^2$ be a curve as in Hypothesis~\ref{hyp}. Then there exists $\tau_\star = \tau_\star(\Sg,m) \in (-2,0)$ such that the discrete spectrum of $\sfD_{\tau,m}$ is non-empty for all $\tau\in (-\infty,\frac{4}{\tau_\star})\cup(\tau_\star,0)$.
\end{thm}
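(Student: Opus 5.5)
The plan has two parts: first reduce to $\tau\in(\tau_\star,0)$ by a symmetry in $\tau$, then construct a trial function $u$ with $Q[\mathcal U u]<0$ in Lemma~\ref{shifted quad}.

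\emph{Step 1: reduction via $\tau\mapsto 4/\tau$.} From \eqref{def_M} one computes
\[
\frac{4+16\tau^{-2}}{4-16\tau^{-2}}=-\frac{4+\tau^2}{4-\tau^2}
\quad\text{and}\quad
\frac{16\tau^{-1}}{4-16\tau^{-2}}=-\frac{4\tau}{4-\tau^2},
\]
hence $M_{4/\tau}=-M_\tau$. Consequently the unitary map $W(u_+\oplus u_-):=u_+\oplus(-u_-)$ sends $\dom\sfD_{\tau,m}$ onto $\dom\sfD_{4/\tau,m}$ and commutes with $\cD_m$ on $\dR^2\sm\Sg$, so $\sfD_{4/\tau,m}=W\sfD_{\tau,m}W^{-1}$. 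Since $\tau\mapsto 4/\tau$ maps $(\tau_\star,0)$ onto $(-\infty,4/\tau_\star)$, it suffices to treat $\tau\in(\tau_\star,0)$ with $\tau_\star$ close to $0$.

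\emph{Step 2: reduction to a negative value of $Q$.} By Proposition~\ref{essentials} the bottom of $\sess(\sfD_{\tau,m}^2)$ equals $m^2(\frac{\tau^2-4}{\tau^2+4})^2$. So by the min-max principle, Lemma~\ref{shifted quad}, and Proposition~\ref{essentials}\,(v), it is enough to find one $u\in\dom\sfD_{\tau,m}$ with $Q[\mathcal U u]<0$. This gives an eigenvalue of $\sfD^2_{\tau,m}$ below the essential spectrum, hence a pair $\pm\lambda$ of discrete eigenvalues of $\sfD_{\tau,m}$ in the gap.

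\emph{Step 3: the trial function.} Let $k_\tau=4m|\tau|/(\tau^2+4)$ be the transversal decay rate from Lemma~\ref{lemma_1D}. I will take
\[
v(s,t)=\chi_n(s)\,g(t)\,\xi_\pm(s),
\]
where $g(t)=e^{-k_\tau|t|}$ and $\chi_n$ is a longitudinal cut-off equal to $1$ on $[-n,n]$ and decaying logarithmically outside, so that $\int|\chi_n'|^2\to0$ as $n\to\infty$. The spinor is $\xi_+=e$ on the $t>0$ side and $\xi_-(s)=M_\tau(s)e$ on the $t<0$ side, with $e\in\dC^2$ a fixed vector to be optimised. This choice satisfies the transmission condition. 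Convexity (Hypothesis~\ref{hyp}) gives $U\supset\dR\times(-\infty,0)$, so there is no cut-locus problem on the $\Omg_-$ side. On the $\Omg_+$ side the function $g(t)e$ does not depend on $s$, so by \eqref{symmetry_cut_locus} it is continuous across ${\rm Cut}(\Sg)$ and defines an $H^1$ function. Outside $[s_1,s_2]$, Proposition~\ref{assumption} describes the geometry explicitly: there $Q$ reduces to the straight-line quantity. I expect the straight-line contribution to be $\le0$ up to the error from $\chi_n$, because $g$ combined with the spinor structure realises the ground state of Lemma~\ref{lemma_1D}, at least on the diagonalised component, in the spirit of \eqref{M_diagonalization}.

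\emph{Step 4: expansion in $\tau$ (main obstacle).} It remains to expand the contribution of $s\in[s_1,s_2]$ as $\tau\to0^-$. Because $M_\tau-\sigma_0=O(\tau)$, all terms there should be $O(\tau^2)$:
\begin{itemize}
\item the curvature boundary term $\frac{\kp}{2}(|v(s,0^+)|^2-|v(s,0^-)|^2)$;
\item the $s$-derivative cost $|\p_s M_\tau e|^2$, of size $O(\tau^2\kp^2)$;
\item the jump term $\frac{2m}{\tau}|(\sigma_0-M_\tau)e|^2$;
\item the corrections from the weights $1-\kp t$, handled by integrating $\p_t(|g|^2)\kp t$ by parts.
\end{itemize}
The aim is to show that the $\tau^2$-coefficient is $-c\int\kp\,\dd s+O(\int\kp^2)$ with $c>0$, after a suitable choice of $e$ aligned with an eigenvector of $\ii\sigma_3(\sigma\cdot\nu)$ where $\kp$ lives. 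Here $\int\kp\,\dd s=2\omg>0$, while the $\int\kp^2$ term must be shown either to cancel or to have a smaller coefficient. If the direct expansion does not close, I would instead let the transversal profile depend on $s$, adding a term of order $\tau$ in $g$ near the support of $\kp$, and optimise, as in the soft-waveguide arguments of \cite{KKK21,KK24}. Once the $\tau^2$-coefficient is shown to be negative, taking $\tau_\star$ close to $0$ and then $n$ large gives $Q[v]<0$.
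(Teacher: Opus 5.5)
Your Steps~1--3 are sound. The identity $M_{4/\tau}=-M_\tau$ does give $\sfD_{4/\tau,m}=W\sfD_{\tau,m}W^{-1}$, which is a neat way to obtain the interval $(-\infty,\frac{4}{\tau_\star})$; the paper instead lets $\tau\to0^-$ and $\tau\to-\infty$ in one common bound. Your trial function is the paper's: there $e=(1,0)^\top$ and the cut-off is symmetric about $\frac{s_1+s_2}{2}$, which you also need for continuity across the bisector part of ${\rm Cut}\,(\Sg)$.

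The gap is Step~4: its order counting is wrong and its target is false. The transversal profile $e^{-k|t|}$, $k=\frac{4m|\tau|}{\tau^2+4}$, has mass of order $1/k$ and first moment of order $1/k^2$, and on the $\Omg_+$ side it is cut at $t=c(s)$. An exact computation gives, for every unit vector $e$,
\[
\lim_{n\to\infty}Q[v_n]=\int_{\dR}e^{-2kc(s)}\Big(-k+\frac{\kp(s)}{2}\big(1+2kc(s)\big)\Big)\dd s+\frac{16\tau^2}{(4-\tau^2)^2}\int_{\dR}\kp(s)^2\int_{-\infty}^0\frac{e^{2kt}}{1-\kp(s)t}\dd t\dd s.
\]
The reasons are as follows:
\begin{itemize}
\item The one-dimensional ground-state identity $k(|e|^2+|M_\tau e|^2)+\frac{2m}{\tau}|(\sigma_0-M_\tau)e|^2=0$ holds for every $e$.
\item The curvature boundary term $\frac{\kp}{2}(|e|^2-|M_\tau e|^2)$ cancels exactly against the weight correction from $1-\kp t$ computed over full half-lines.
\item One has $|\p_s(M_\tau e)|=\frac{4|\tau|}{|4-\tau^2|}\kp|e|$.
\end{itemize}
So optimising $e$ gains nothing, and what survives is the truncation at the cut-locus. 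On $[s_1,s_2]$, where $c$ is bounded, the first integrand tends to $\frac{\kp}{2}$ as $\tau\to0^-$. Hence the curved part contributes $+\omg+o(1)$: positive and of order one, not a negative multiple of $\tau^2$. The $\kp^2$-term is $O(|\tau|)$, not $O(\tau^2)$, because you dropped the factor $\int_{-\infty}^0e^{2kt}\dd t=\frac{1}{2k}$. It is harmless, but in your scaling it would sit at the same order as $\int\kp$ and could not be controlled for a general $\Sg$.

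The missing idea is that the binding comes from the straight arms, whose contribution you only record as ``$\le0$''. There $\kp=0$ and, by Proposition~\ref{assumption}\,(iii), $c(s)=\frac{h}{\sin\omg}+{\rm dist}(s,[s_1,s_2])\cot\omg$. The truncation deficit $-ke^{-2kc(s)}$ therefore integrates to $-e^{-2kh/\sin\omg}\tan\omg$, which tends to $-\tan\omg$ and is also of order one.

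To close the argument, bound the curved part by $\omg$: write its integrand as $e^{-2kc}\big(\frac{\kp}{2}-k(1-\kp c)\big)\le\frac{\kp}{2}$, using $\kp c\le1$. Then use $\omg<\tan\omg$ to get $\limsup_{n}Q[v_n]\le\omg-e^{-2kh/\sin\omg}\tan\omg+O(|\tau|)<0$ for $|\tau|$ small; your Step~1 then covers large $|\tau|$. Neither the choice of $e$ nor the suggested fallback of an $s$-dependent profile near $\supp\kp$ touches this mechanism. As written, the proposal does not prove the theorem.
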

\begin{proof}
	In the proof, we will employ parallel coordinates on $\Phi(U)=\dR^2\sm{\rm Cut}\,(\Sg)$. Let $s_1,s_2\in\dR$, $s_1 < s_2$, be as in Proposition~\ref{assumption}.
	Let us consider the functions $v = v_+\oplus v_-\colon U\arr\dC^2$ defined by
	\[
	v_+(s,t) := e^{\frac{4m\tau}{\tau^2+4}t}\begin{pmatrix} 1\\ 0\end{pmatrix}\ \text{ if }t>0,\qquad v_-(s,t) := 
	e^{-\frac{4m\tau}{\tau^2+4}t}\begin{pmatrix} \frac{1+\frac14\tau^2}{1-\frac14\tau^2}\\ \frac{\ii(\nu_1(s)+\ii\nu_2(s))\tau}{1-\frac14\tau^2}\end{pmatrix}\ \text{ if }t<0,
	\]
	and the cut-off function $\varphi_n\colon\dR\arr[0,1]$, $n\in\dN$, given by
	\[
	\varphi_n(s) := \wt\varphi_n\left(s-\frac{s_1+s_2}{2}\right),\qquad\text{where}
	\quad\wt\varphi_n(s) = \begin{cases}
		1,&\qquad |s|< n,\\
		\frac{2n-|s|}{n},&\qquad n\le|s|\le 2n,\\
		0,&\qquad |s| > 2n.
	\end{cases}	
	\]
	We employ
	\[
	v_n(s,t) = \varphi_n(s)v(s,t),\qquad n\in\dN,
	\]
	as the family of trial functions for the transformed Dirac operator $\sfD_{\tau,m}$. We will argue at the end of the proof
	that $\cU^{-1}v_n\in\dom\sfD_{\tau,m}$ for all $n\in\dN$ sufficiently large. As in Lemma \ref{shifted quad}, we introduce the sequence of real numbers $Q_j[v_n]$, $j=1,2$; formally, $Q_1[v_n] + Q_2[v_n]$ coincides with the shifted quadratic form of the square of the Dirac operator $\sfD_{\tau,m}$ applied to $v_n$.
	To find the asymptotics of $Q_1[v_n]$, we use that
	the Frenet formula~\eqref{Frenet} and $|\nu| = 1$ imply
	\begin{equation*}
		|\nu_1'(s) + \ii \nu_2'(s)| = |\gamma_2''(s) - \ii \gamma_1''(s)| = |\kappa(s)| \cdot |\nu_2(s) - \ii \nu_1(s)| = |\kappa(s)|.
	\end{equation*}
	Since $\kappa(s) = 0$ for all $s$ in the support of $\varphi_n'$, if $n$ is sufficiently large, this implies for $Q_1[v_n]$ the following asymptotic bound:
	\begin{equation}\label{eq:Q1}
		\begin{aligned}
			Q_1[v_n]&= \int_\dR\int_{-\infty}^{c(s)}|\varphi_n'(s)|^2|v(s,t)|^2\dd t\dd s + \int_{s_1}^{s_2}\int_{-\infty}^0
			\frac{\kp^2(s)\tau^2 e^{-\frac{8m\tau}{\tau^2+4} t}}{(1-\frac14\tau^2)^2(1-\kp(s) t)}\dd t\dd s\\
			&\le
			\frac{|\tau|}{8m}\frac{\tau^2+4}{(1-\frac14\tau^2)^2}
			\int_{s_1}^{s_2}\kp^2(s)\dd s+o(1)\qquad \text{ for }\ n\arr\infty.
		\end{aligned}
	\end{equation}
	Next, as $|\partial_t v(s,t)| = \frac{4 m |\tau|}{\tau^2 + 4} |v(s,t)|$, the term $Q_2[v_n]$ can be expressed as
	\[
	\begin{aligned}
		Q_2[v_n] & = \int_{\dR}|\varphi_n(s)|^2
		\Bigg[
		\frac{32m^2\tau^2}{(\tau^2+4)^2}\int_{-\infty}^{c(s)}|v(s,t)|^2(1-\kp(s)t)\dd t\\
		&\qquad\qquad\qquad + \frac{2m}{\tau}|v(s,0^+)-v(s,0^-)|^2 +\frac{\kp(s)}{2}|v(s,0^+)|^2 -\frac{\kp(s)}{2}|v(s,0^-)|^2\Bigg]\dd s.
	\end{aligned}
	\]
	The integral with respect to $t$ appearing in the above expression for $Q_2[v_n]$ can be written as
	\[
	\begin{aligned}
		I & := \int_{-\infty}^{c(s)}|v(s,t)|^2(1-\kp(s)t)\dd t \\
		&= \int_{-\infty}^0 \underbrace{\frac{\tau^2+(1+\frac14\tau^2)^2}{(1-\frac14\tau^2)^2}}_{=:a(\tau)}
		e^{-\frac{8m\tau t}{\tau^2+4}}(1-\kp(s)t)\dd t + \int_0^{c(s)} e^{\frac{8m\tau t}{\tau^2+4}}(1-\kp(s)t)\dd t
	\end{aligned}
	\]
	and further computed
	\[
	\begin{aligned}
		I &= a(\tau)\frac{\tau^2+4}{8m|\tau|} + a(\tau)\kp(s)\frac{(\tau^2+4)^2}{64m^2\tau^2} + \frac{\tau^2+4}{8m|\tau|}\Big[1-e^{\frac{8m\tau}{\tau^2+4}c(s)}\Big]\\ &\qquad\qquad\qquad\qquad-\kp(s)\frac{(\tau^2+4)^2}{64m^2\tau^2}\Big[1-\Big(1-\frac{8m\tau}{\tau^2+4}c(s)\Big)e^{\frac{8m\tau}{\tau^2+4}c(s)}\Big].
	\end{aligned}	
	\]
	As the next step, we evaluate the limit of $Q_2[v_n]$ as $n\arr\infty$ using the Lebesgue dominated convergence theorem and the explicit form of $c(s)$ for $s \notin [s_1, s_2]$ from Proposition~\ref{assumption}~(iii) as
	\begin{equation}\label{eq:Q2}
		\begin{aligned}
			Q_2[v_n] & \!=\!\int_\dR|\varphi_n(s)|^2\Bigg(a(\tau)\frac{4m|\tau|}{\tau^2+4} + a(\tau)\frac{\kp(s)}{2} + \frac{4m|\tau|}{\tau^2+4} -\frac{4m|\tau|}{\tau^2+4}e^{\frac{8m\tau}{\tau^2+4}c(s)}-\frac{\kp(s)}{2}\\
			&\quad  + \frac{\kp(s)}{2}\left(1-\frac{8m\tau}{\tau^2+4}c(s)\right)e^{\frac{8m\tau}{\tau^2+4}c(s)} + \frac{2m\tau(1+\frac14\tau^2)}{(1-\frac14\tau^2)^2} +\frac{\kp(s)}{2} -\frac{\kp(s)}{2}a(\tau)\Bigg)\dd s\\
			&
			\!=\!\int_\dR|\varphi_n(s)|^2\Bigg( -\frac{4m|\tau|}{\tau^2+4}e^{\frac{8m\tau}{\tau^2+4}c(s)}
			+ \frac{\kp(s)}{2}\left(1-\frac{8m\tau}{\tau^2+4}c(s)\right)e^{\frac{8m\tau}{\tau^2+4}c(s)} \Bigg)\dd s\\
			&\xrightarrow[n\arr\infty]{}
			I_1 + I_2 + I_3,
		\end{aligned}
	\end{equation}
	where the terms $\{I_j\}_{j=1}^3$ are given by
	\begin{equation*}
		\begin{aligned}
			I_1 &:= \frac{4m\tau}{\tau^2+4} \int_{s_1}^{s_2} e^{\frac{8m\tau}{\tau^2+4}c(s)}\dd s,\\
			I_2 &
			:=
			\frac{8m\tau}{\tau^2+4} \int_0^{\infty} e^{\frac{8m\tau}{\tau^2+4}\left(\frac{h}{\sin \omg} + s\cot\omg\right)}\dd s,\\
			I_3 & :=  \int_{s_1}^{s_2}\frac{\kp(s)}{2}
			\left(1-\frac{8m\tau}{\tau^2+4}c(s)\right)e^{\frac{8m\tau}{\tau^2+4}c(s)}\dd s.
		\end{aligned}
	\end{equation*}
	Using the bound~\eqref{eq:boundc(s)} in the first estimate
	for the sum $I_1+I_3$ we get
	\[
	I_1+I_3 = \frac{4m\tau}{\tau^2+4} \int_{s_1}^{s_2} e^{\frac{8m\tau}{\tau^2+4}c(s)} \left( 1 - \kappa(s) c(s)  \right)\dd s
	+ \int_{s_1}^{s_2}\frac{\kp(s)}{2}
	e^{\frac{8m\tau}{\tau^2+4}c(s)}\dd s
	\le \int_{s_1}^{s_2}\frac{\kp(s)}{2}\dd s = \omg.
	\]
	Moreover, the term $I_2$ can be computed exactly,
	\[
	I_2 = - e^{\frac{8m\tau}{\tau^2+4}\frac{h}{\sin\omega}}\tan\omg.
	\]
	Combining the limit of $Q_2[v_n]$ in~\eqref{eq:Q2} with the above estimates and with the asymptotic upper bound on $Q_1[v_n]$ in~\eqref{eq:Q1}, we arrive at
	\begin{equation} \label{eq:cond1}
	\limsup_{n\arr\infty}Q[v_n] \le 
	\omg -e^{\frac{8m\tau}{\tau^2+4}\frac{h}{\sin\omega}}\tan\omega
	+\frac{|\tau|}{8 m}\frac{\tau^2+4}{(1-\frac14\tau^2)^2}
	\int_{s_1}^{s_2}\kp^2(s)\dd s.
	\end{equation}
	Passing to the limits $\tau \arr 0^-$ and $\tau\arr-\infty$
	in this upper bound, we get
	\begin{equation}\label{eq:limitsbnds}
		\begin{aligned}
			\lim_{\tau\arr-\infty}\limsup_{n\arr\infty}Q[v_n] &\le \omg - \tan\omg  < 0,\\
			\lim_{\tau\arr0^-}\limsup_{n\arr\infty}Q[v_n] &\le \omg - \tan\omg  < 0,	
		\end{aligned}	
	\end{equation}
	where we used the elementary inequality $x < \tan x$ valid for all $x\in(0,\frac{\pi}{2})$. The claim of the theorem is a direct consequence of~\eqref{eq:limitsbnds}, if we can show that $u_n := \mathcal{U}^{-1}v_n\in \dom\sfD_{\tau,m}$. We remark that $u_n = v_n \circ \Phi^{-1}$, $\Phi^{-1}(\gamma(s))=(s,0)$, and $v_n(s,0^-)=M_\tau(s)v_n(s,0^+)$. The latter condition holding for all $s \in \mathbb{R}$ is equivalent to
	\[
(u_n)_- = M_\tau (u_n)_+ \text{ on } \Sigma.
	\]
	It is left to show that $u_n\in H^1(\dR^2\sm\Sg;\dC^2)$, but, due to~\eqref{quad trafo} and as we showed that $Q_j[v_n]<\infty$, $j=1,2$,  this immediately follows from the fact that $u_n$ is  continuous along the cut-locus for $n$ sufficiently large. This continuity holds because of~\eqref{symmetry_cut_locus}
	and because, by Proposition~\ref{assumption}\,(iii) and the construction of $v_n$ we have that $\Phi(s_1',c(s'_1)) = \Phi(s_2',c(s_2'))$ implies $\varphi_n(s_1') = \varphi_n(s_2')$ for $s_1',s_2'\in\dR$.

\end{proof}

	\begin{remark}
		The above proof yields, as a byproduct, that the condition
		\begin{equation}\label{eq:cond}
			\omg -e^{\frac{8m\tau}{\tau^2+4}\frac{h}{\sin\omega}}\tan\omega
			+\frac{|\tau|}{8 m}\frac{\tau^2+4}{(1-\frac14\tau^2)^2}
			\int_\Sg\kp^2\dd \mathcal{H} < 0
		\end{equation}
		is sufficient for the operator $\sfD_{\tau,m}$ with
		$\tau\in(-\infty,0)\sm\{-2\}$ and $m > 0$ to have discrete spectrum in the gap of the essential spectrum; \cf ~\eqref{eq:cond1}. This condition involves the coupling constant $\tau$, the mass $m$, and the geometric quantities $\omg$, $\int_\Sg\kp^2\dd \mathcal{H}$ and $h$. We expect that the minimal admissible value of the parameter $h$ can be estimated in terms of the curvature yielding a more explicit sufficient condition for the existence of the discrete eigenvalues. 
	\end{remark}	
	\begin{ex}
		\label{rem:cond}
		In this example, we derive from~\eqref{eq:cond} a more explicit sufficient condition for the existence of the discrete spectrum for $\sfD_{\tau,m}$ for a special class of curves and make plots of admissible values of the parameter $\tau$ in dependence on geometric parameters.
		
		Let $\rho \in C^\infty_0(\dR)$ be real-valued and such that
		\begin{multicols}{2}
			\begin{myenum}
				\item [{\rm (a)}] $0\le \rho \le 2$;
				\item [{\rm (b)}] $\supp\rho = [-1,1]$;
				\item [{\rm (c)}] $\int_{-1}^1\rho(x)\dd x = 2\omg$;  
				\item [{\rm (d)}] $\rho$ is an even function.
			\end{myenum}
		\end{multicols}
		Consider, for $R > 0$, the function
		\[
		\rho_R(x) = \frac{1}{R}\rho\left(\frac{x}{R}\right).
		\]
		In particular, we have $\supp \rho_R = [-R,R]$ and $\int_{-R}^R\rho_R(x) \dd x = 2\omg$. Let the curve $\Sg\subset\dR^2$ be such that its curvature satisfies $\kp = \rho_R$. It is straightforward to see that upon a suitable rotation and translation such a curve $\Sg$ is a local deformation of the broken line $\Sg_0$ defined in~\eqref{eq:broken}. Using standard formulas from differential geometry of curves in the plane~\cite[Theorem 1.88]{ASS}, we can reconstruct the arc-length parametrization of the curve $\Sg$
		in the clockwise orientation
		as
		\begin{equation}\label{eq:gammas}
			\begin{aligned}
				\gamma_1(s) &= \gamma_1(s_0) + \int_{s_0}^s\cos\left(\int_{s_0}^{s_1}\kp(t)\dd t\right)\dd s_1,\\
				\gamma_2(s) &= \gamma_2(s_0)+
				\int_{s_0}^s\sin\left(\int_{s_0}^{s_1}\kp(t)\dd t\right)\dd s_1.
			\end{aligned}	
		\end{equation}
		We can choose $s_0 = 0$ and assume without loss of generality that $ \gamma_2(0) = 0$. The second equation in~\eqref{eq:gammas} yields the bound
		\[	
		\gamma_2(R) \le R\sin\omg.
		\]
		Using the formula~\eqref{eq:hhprime} in the proof of Proposition~\ref{assumption} in Appendix~\ref{app} we derive that the admissible value of the parameter $h$ satisfies the bound
		\begin{equation}\label{eq:bndh}
			h \le R\sin\omg(1+\cos\omg).
		\end{equation}
		On the other hand, we also get
		\begin{equation}\label{eq:kp}
			\int_\Sg \kp^2(s)\dd \mathcal{H} =\frac{1}{R}\int_\dR\rho^2(x)\dd x \le \frac{2}{R}\int_\dR\rho(x)\dd x \le 
			\frac{4\omg}{R}.
		\end{equation}
		The condition~\eqref{eq:cond} and the bounds~\eqref{eq:bndh} and~\eqref{eq:kp} yield that the condition
	
		\begin{equation}\label{eq:cond2}
			\omg - e^{\frac{8m\tau}{\tau^2+4}R(1+\cos\omg)}\tan\omg + \frac{|\tau|}{8m}\frac{\tau^2+4}{(1-\frac14\tau^2)^2}\frac{4\omg}{R} < 0
		\end{equation}
		is sufficient for the operator $\sfD_{\tau,m}$ with
		the Lorentz-scalar $\dl$-shell interaction supported on $\Sg$ as above with the strength $\tau\in(-\infty,0)\sm\{-2\}$ and the mass $m >0$ to have discrete spectrum in the gap of the essential spectrum.
		Below we plot
		regions in which the existence of the discrete spectrum is guaranteed by the condition~\eqref{eq:cond2}.
		Namely, we plot for $m = R = 1$ such a region 
		in the $(\tau,\omg)$-plane in Figure~\ref{fig1}, further
		for $m = 1$ and $\omg = \frac{49\pi}{100}$ we plot in the $(\tau,R)$-plane in Figure~\ref{fig2}.
\end{ex}
\begin{figure}[H]
			\centering
			\begin{subfigure}{0.48\textwidth}
				\centering
				\includegraphics[width=\linewidth]{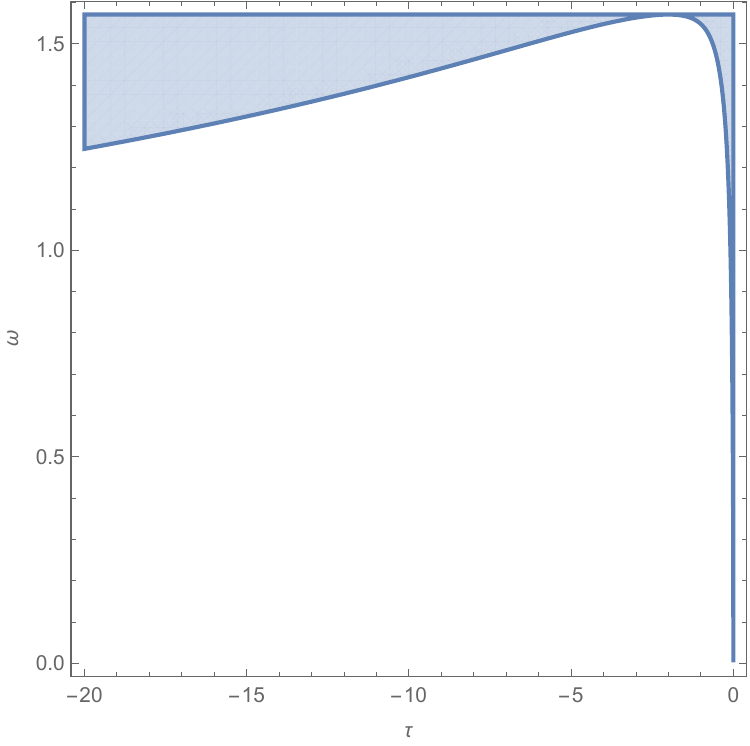}
				\caption{The region in $(\tau,\omg)$-plane with fixed $m = R = 1$.}
				\label{fig1}
			\end{subfigure}
			\hfill
			\begin{subfigure}{0.48\textwidth}
				\centering
				\includegraphics[width=\linewidth]{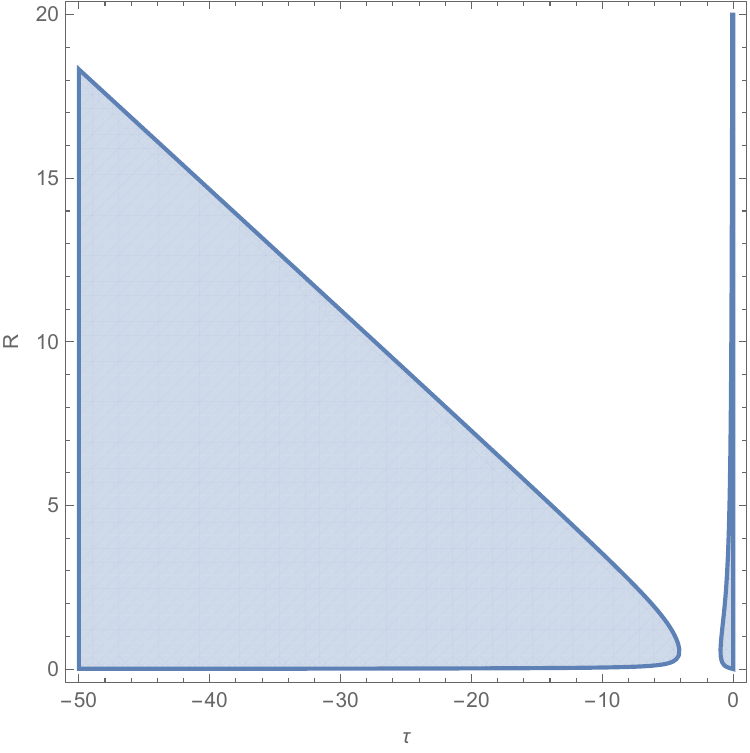}
				\caption{The region in $(\tau,R)$-plane with fixed $m = 1$ and $\omg = \frac{49\pi}{100}$.}
				\label{fig2}
			\end{subfigure}
			
			\caption{Regions where the condition~\eqref{eq:cond2} yields existence of the discrete spectrum for $\sfD_{\tau,m}$ with interaction supported on the curve $\Sg$ as in Remark~\ref{rem:cond}.}
			\label{fig:both}
		\end{figure}


\subsection*{Acknowledgments}
VL is grateful to Graz University of Technology for the hospitality during research stays in November 2023 and Spring 2026 where a part of this work was done. MV is also grateful to Graz University of Technology for its hospitality during a research stay in March 2023.
\begin{appendix}

\section{Proof of Proposition~\ref{assumption}}
\label{app}
	Since $\Sg$ coincides with the broken line $\Sg_0$ in~\eqref{eq:Sigma0} outside a disk of a sufficiently large radius, we can first choose $s_1',s_2'\in\dR$, $s_1' < s_2'$ such that $\supp\kp \subset[s_1',s_2']$
	and that $h'=-\gamma_2(s_1') =\gamma_2(s_2') > 0$. 
	\begin{figure}[h]
\includegraphics[width=7cm, keepaspectratio]{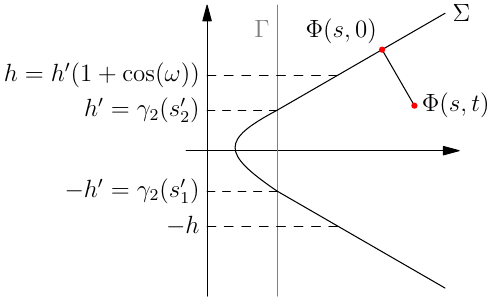}
	\caption{Schematic picture of some geometric quantities appearing in the proof of Proposition~\ref{assumption}.}
	\end{figure}
	
	Next, we fix the parameters  
	\[
	s_2 = s_2' + h'\quad\text{and}\quad s_1 = s_1' - h'.
	\]	
	Condition~(i) is clearly satisfied, because $[s_1',s_2']\subset[s_1,s_2]$.
	Condition~(ii) is also satisfied with the parameter $h$ given by the formula
	\begin{equation}\label{eq:hhprime}
	h = h'(1+\cos\omg). 
	\end{equation}
	It remains to verify that this choice satisfies condition~(iii). We will show the first identity in~(iii). The second identity can be shown analogously. Let $s > s_2$ be fixed.
	The value of the cut-radius map $c(s)$ can not be larger than the length of the line segment emanating from the point $\gamma(s)$ in the normal direction to $\Sigma$ whose second endpoint is on the bisector line $\Sigma_0$. From this observation, we conclude that
	\[
	c(s)\le \frac{h}{\sin\omg} + (s-s_2)\cot\omg.
	\]
	By definition of the cut-radius map, the equality is attained in the above inequality in the case that 
	\[
	{\rm dist}_{\dR^2}\big(\Phi(s,t),\gamma([s_1',s_2'])\big) > t \qquad \text{for all}\,\,t \in \left(0,\frac{h}{\sin\omg} + (s-s_2)\cot\omg\right),
	\]
	\ie the distance between $\Phi(s,t)$ and $\gamma(s)$ is smaller than the distance between $\Phi(s,t)$ and the arc $\gamma([s_1',s_2'])$ for all $t$ in the above interval.
	Using the convexity of the curve $\Sg$, we infer that
	\[
	{\rm dist}_{\dR^2}\big(\Phi(s,t),\gamma([s_1',s_2'])\big)
	\ge {\rm dist}_{\dR^2}\big(\Phi(s,t),\Gamma\big),
	\]
	where $\Gamma := \{(\gamma_1(s_2'),y)\in\dR^2\colon y\in\dR\}$ is the straight line that connects the points $\gamma(s_1')$ and $\gamma(s_2')$.
	Therefore, it suffices to verify that
	\begin{equation}\label{eq:toverify}
		{\rm dist}_{\dR^2}\big(\Phi(s,t),\Gamma\big) > t\qquad \text{for all}\,\,t \in \left(0,\frac{h}{\sin\omg} + (s-s_2)\cot\omg\right).
	\end{equation}
	Note that 
	\[
	\begin{aligned}
		{\rm dist}_{\dR^2}(\Phi(s,t),\G) & = 
		\gamma_1(s)-\gamma_1(s_2') + t\cos\omg \\
		&=
		(s-s_2')\sin\omg + t\cos\omg. 
	\end{aligned}	
	\]
	The inequality ${\rm dist}_{\dR^2}\big(\Phi(s,t),\Gamma\big) > t$ holds if
	\[
	t < (s-s_2+h')\frac{\sin \omg}{1-\cos \omg}.
	\]
	Thus, the condition~\eqref{eq:toverify} holds if
	\[
	\frac{h'(1+\cos \omg)}{\sin\omg} + (s-s_2)\cot\omg \le (s-s_2+h')\frac{\sin \omg}{1-\cos \omg}
	\]
	Using $\sin^2 \omega + \cos^2 \omega = 1$, the above inequality transforms to 
	\[
	(s-s_2)\left(\cot \omg-\frac{\sin \omg}{1-\cos \omg}\right) \le 0,
	\]
	which is clearly satisfied. Hence,~\eqref{eq:toverify} holds. Therefore
	\[
	c(s)\ge \frac{h}{\sin(\omega)}+(s-s_2)\cot(\omega).	
	\]
	Combining this with the opposite inequality established above, we conclude that
	\[
	c(s) =\frac{h}{\sin\omg}+ (s-s_2)\cot\omg.\qedhere
	\]

\end{appendix}


\begin{thebibliography}{999}



\bibitem{ASS}
H.~Alencar, W.~Santos, and G.~Silva Neto, 
{\it Differential geometry of plane curves},
American Mathematical Society, Providence, 2022.

\bibitem{ALTR17}
N.~Arrizabalaga, L.~Le~Treust, and N.~Raymond,
On the MIT bag model in the non-relativistic limit,
{\it Comm. Math. Phys.} {\bf 354} (2) (2017), 641--669.

\bibitem{AMV14}
N.~Arrizabalaga, A.~Mas, and L.~Vega,
\newblock Shell interactions for {D}irac operators,
\newblock {\em J. Math. Pures Appl. (9)} {\bf 102} (2014), 617--639.

	\bibitem{AMV15}
	N.~Arrizabalaga, A.~Mas, and L.~Vega,
	\newblock Shell interactions for {D}irac operators: on the point spectrum and
	the confinement,
	\newblock {\em SIAM J. Math. Anal.} {\bf 47}(2) (2015), 1044--1069.

\bibitem{BEHL17}
J. Behrndt, P. Exner, M. Holzmann, and V. Lotoreichik,
Approximation of {S}chr\"odinger operators with $\delta$-interactions supported on hypersurfaces,
{\it Math. Nachr.} {\bf 290} (2017), 1215--1248.

\bibitem{BEHL18}
J.~Behrndt, P.~Exner, M.~Holzmann, and V.~Lotoreichik,
On the spectral properties of Dirac operators with electrostatic $\dl$-shell interactions, {\it J. Math. Pures Appl. (9)} {\bf 111} (2018), 47--78.

\bibitem{BEHL19}
	J.~Behrndt, P.~Exner, M. Holzmann, and V.~Lotoreichik,
	\newblock On Dirac operators in $\R^3$ with electrostatic and Lorentz scalar $\delta$-shell interactions,
	\newblock {\em Quantum Studies} {\bf 6} (2019), 295--314.

\bibitem{BEHT25}
J. Behrndt, P. Exner, M. Holzmann, and M. Tu\v{s}ek,
On two-dimensional Dirac operators with $\delta$-shell interactions supported on unbounded curves with straight ends,
Singularities, Asymptotics, and Limiting Models, INdAM {\bf 64}, Springer (2025), 35-77.

\bibitem{BEL14}
J.~Behrndt, P.~Exner, and V.~Lotoreichik,
Schr\"{o}dinger operators with $\dl$-interactions supported on conical surfaces,
{\it J. Phys. A: Math. Theor.} {\bf 47} (2014),  355202.



\bibitem{BHOBP} J.~Behrndt, M.~Holzmann, T.~Ourmi{\`e}res-Bonafos, and K.~Pankrashkin,  Two-dimensional Dirac operators with singular interactions supported on closed curves, {\it J.~ Funct.~Anal.}~{\bf 279} (2020), 108700.

\bibitem{BHS25}
J. Behrndt, M. Holzmann, and C. Stelzer-Landauer,
Approximation of Dirac operators with $\dl$-shell potentials in the norm resolvent sense, I. Qualitative results,
{\it Math. Nachr.} {\bf 298} (2025), 2499--2546.


\bibitem{BHS26}
J. Behrndt, M. Holzmann, and C. Stelzer-Landauer,
Approximation of Dirac operators with $\delta$-shell potentials in the norm resolvent sense, II. Quantitative results, {\it Math. Nachr.} {\bf 299} (2026), 704--763.

\bibitem{BHSS24}
J.~Behrndt, M.~Holzmann, C. Stelzer-Landauer, and G. Stenzel,
\newblock Boundary triples and Weyl functions for Dirac operators with singular interactions,
\newblock {\em Rev. Math. Phys.} {\bf 36} (2024), 2350036 (65 pages).

\bibitem{BHT23}
J. Behrndt, M. Holzmann, and M. Tu\v{s}ek,
Two-dimensional Dirac operators with general $\dl$-shell interactions supported on a straight line,
{\it J. Phys. A} {\bf 56} (2023), 045201.

\bibitem{B22}
B. Benhellal,
Spectral analysis of Dirac operators with delta interactions supported on the
boundaries of rough domains,
\newblock {\em J. Math. Phys.} {\bf 63} (2022), 011507 (34 pages).

\bibitem{BBKO22}
W.~Borrelli, P.~Briet, D.~Krej\v{c}i\v{r}\'{i}k, and T.~Ourmi\`{e}res-Bonafos, 
Spectral properties of relativistic quantum waveguides,
{\it Ann. Henri Poincar\'{e}} {\bf 23} (2022), 4069--4114.

\bibitem{CEK04}
G.~Carron, P.~Exner, and D.~Krej\v{c}i\v{r}\'{i}k,
Topologically nontrivial quantum layers,
{\it J. Math. Phys.} {\bf 45} (2004),  774--784.


\bibitem{CLMT23}
B.~Cassano, V.~Lotoreichik, A.~Mas, and M.~Tu\v{s}ek, 
General $\dl$-shell interactions for the two-dimensional Dirac operator: self-adjointness and approximation,
{\it Rev. Mat. Iberoam.} {\bf 39} (2023),  1443--1492.

\bibitem{CH93}
P. Chernoff and R. Hughes,
A {N}ew {C}lass of {P}oint {I}nteractions in {O}ne {D}imension,
{\it J.~ Funct.~Anal.} {\bf 111} (1993), 97--117.

\bibitem{cycon}
H. L. Cycon, R. G. Froese, W. Kirsch, and B. Simon, \emph{Schr\"odinger operators: with applications to quantum mechanics and global geometry,} Texts and Monographs in Physics. Springer Study Edition. Springer-Verlag, Berlin, 1987

\bibitem{DLR12}
M.~Dauge, Y.~Lafranche, and N.~Raymond, 
Quantum waveguides with corners, 
{\it ESAIM, Proc.} {\bf 35} (2012),  14--45. 


\bibitem{DES89}
J.~Dittrich, P.~Exner, and P.~\v{S}eba, 
Dirac operators with a spherically symmetric $\dl$-shell interaction,
{\it J. Math. Phys.} {\bf 30} (1989),  2875--2882.

\bibitem{DE95}
P.~Duclos and P.~Exner, 
Curvature-induced bound states in quantum waveguides in two and three dimensions,
{\it Rev. Math. Phys.} {\bf 7}  (1995), 73--102.

\bibitem{DEK01}
P.~Duclos, P.~Exner, and D.~Krej\v{c}i\v{r}\'{i}k,
Bound states in curved quantum layers,
{\it Commun. Math. Phys.} {\bf 223} (2001), 13--28.

\bibitem{E20}
P.~Exner, 
Spectral properties of soft quantum waveguides,
{\it  J. Phys. A: Math. Theor.} {\bf 53} (2020),  355302.

\bibitem{E22}
P.~Exner, 
Geometrically induced spectral properties of soft quantum waveguides and layers,
{\it Rev. Math. Phys.} {\bf 36} (2022),  2360003.


\bibitem{E22-2}
P.~Exner, 
Soft quantum waveguides in three dimensions,
{\it J. Math. Phys.} {\bf 63} (2022),  042103.

\bibitem{EH22}
P.~Exner and M.~Holzmann, 
Dirac operator spectrum in tubes and layers with a zigzag-type boundary, {\it
Lett. Math. Phys.} {\bf 112} (2022), 102; correction {\bf 113} (2023), 109.


\bibitem{EI01}
P.~Exner and T.~Ichinose, 
Geometrically induced spectrum in curved leaky wires,
{\it J. Phys. A, Math. Gen.} {\bf 34}  (2001),  1439--1450.

\bibitem{EK03}
P.~Exner and S.~Kondej, 
Bound states due to a strong $\dl$-interaction supported by a curved surface, 
{\it J. Phys. A, Math. Gen.} {\bf 36}  (2003), 443--457.

\bibitem{EKL18}
P.~Exner, S.~Kondej, and V.~Lotoreichik, 
Asymptotics of the bound state induced by $\dl$-interaction supported on a weakly deformed plane, {\it J. Math. Phys.} {\bf 59} (2018),  013501.

\bibitem{EKL24}
P.~Exner, S.~Kondej, and V.~Lotoreichik, 
Bound states of weakly deformed soft waveguides, 
{\it Asymptot. Anal.} {\bf 138} (2024),  151--174.

\bibitem{EK15}
P.~Exner and H.~Kova\v{r}\'{i}k, 
{\it Quantum waveguides,}
Springer, Cham, 2015.

\bibitem{ES89}
P.~Exner and P.~\v{S}eba, 
Bound states in curved quantum waveguides,
{\it J. Math. Phys.} {\bf 30} (1989),  2574--2580.

\bibitem{ES24}
P.~Exner and D.~Spitzkopf, 
Tunneling in soft waveguides: closing a book,
{\it  J. Phys. A: Math. Theor.} {\bf 57} (2024),  125301.


\bibitem{FHL24}
D. Frymark, M. Holzmann, and V. Lotoreichik,
Spectral analysis of the Dirac operator with a singular interaction on a broken line,
{\it J. Math. Phys.} {\bf 65} (8) (2024), 083514 (24 pages).

\bibitem{FL23}
D.~Frymark and V.~Lotoreichik,
Self-adjointness of the 2D Dirac operator with singular interactions supported on star graphs, {\it Ann. Henri Poincar\'{e}} {\bf 24} (2023), 179--221.


\bibitem{H64}
P.~Hartman, Geodesic parallel coordinates in the large, 
{\it Amer. J. Math.} {\bf 86} (1964), 705--727.

\bibitem{H26}
M. Holzmann,
Approximation of magnetic {S}chr\"odinger operators with $\delta$-interactions supported on networks,
{\it Lett. Math. Phys.} {\bf 116} (2026), 24 (25 pages).

\bibitem{HOP18}
M.~Holzmann, T.~Ourmi{\`e}res-Bonafos, and K.~Pankrashkin,
\newblock {D}irac operators with {L}orentz scalar shell interactions,
\newblock {\it Rev. Math. Phys.} {\bf 30} (2018), 1850013 (46 pages).

\bibitem{khalile}
M. Khalile and K. Pankrashkin, Eigenvalues of Robin Laplacians in infinite sectors,
{\it  Math. Nachr.} {\bf 291} (2018), 928--965.

\bibitem{KKK21}
S.~Kondej, D.~Krej\v{c}i\v{r}\'{i}k, and J.~K\v{r}\'{i}\v{z}, 
Soft quantum waveguides with an explicit cut-locus,
{\it J. Phys. A, Math. Theor.} {\bf 54} (2021),  30LT01.


\bibitem{KK24}
D.~Krej\v{c}i\v{r}\'{i}k and J.~K\v{r}\'{i}\v{z}, 
Bound states in soft quantum layers,
{\it Publ. Res. Inst. Math. Sci.} {\bf 60} (2024),  741--766.

\bibitem{LO16}
V.~Lotoreichik and T.~Ourmi\`{e}res-Bonafos, 
On the bound states of Schr\"odinger operators with $\dl$-interactions on conical surfaces, {\it Commun. Partial Differ. Equations} {\bf 41} (2016),  999--1028.

\bibitem{LO18}
V.~Lotoreichik and T.~Ourmi{\`e}res-Bonafos,
A Sharp Upper Bound on the Spectral Gap for Graphene Quantum Dots,
{\it Math. Phys. Anal. Geom.} \textbf{22} (2019), 13.


\bibitem{MP18}
A.~Mas and F.~Pizzichillo, 
Klein's paradox and the relativistic $\dl$-shell interaction in $\dR^3$,
{\it Anal. PDE} {\bf 11} (2018),  705--744.

\bibitem{M00}
W.~McLean,
\newblock {\it Strongly Elliptic Systems and Boundary Integral Equations},
\newblock Cambridge University Press, Cambridge, 2000.


\bibitem{MT}
A.~Morame and F.~Truc, Remarks on the spectrum of the Neumann problem with magnetic field in the half-space,
{\it J. Math. Phys.} {\bf 46} (2005) 1-13.

\bibitem{NGM04}
K.S.~Novoselov, A.K.~Geim, S.V.~Morozov, D.~Jiang, Y.~Zhang, S.V.~Dubonos, I.V.~Grigorieva, and A.A.~Firsov, Electric field effect in atomically thin carbon films, \emph{Science} \textbf{306} (2004), 666--669.


\bibitem{OP18}
T.~Ourmi\`{e}res-Bonafos and K.~Pankrashkin,
Discrete spectrum of interactions concentrated near conical surfaces, {\it 
Appl. Anal.} {\bf 97} (2018),  1628--1649.


\bibitem{R21}
V.~Rabinovich,
Two-Dimensional Dirac Operators with Interactions on Unbounded Smooth Curves,
{\em Russ. J. Math. Phys.} {\bf 28} (2021), 524--542.


\bibitem{R22a}
V.~Rabinovich,
Dirac Operators with Delta-Interactions on Smooth Hypersurfaces in $\mathbb{R}^n$,
{\em J. Fourier Anal. Appl.} {\bf 28} (2022), 20.


\bibitem{RSIV}
M.~Reed and B.~Simon, {\it 
Methods of modern mathematical physics. IV: Analysis of operators},
Academic Press, 1978.

\bibitem{S01}
A.~Savo, 
Lower bounds for the nodal length of eigenfunctions of the Laplacian, {\it 
Ann. Global Anal. Geom.} {\bf 19}  (2001),  133--151.


\bibitem{SST03}
K.~Shiohama, T.~Shioya, and M.~Tanaka, 
{\it The geometry of total curvature on complete open surfaces},
Cambridge University Press, 2003.

\bibitem{simon}
B. Simon, On the number of bound states of two body Schr\"odinger operators - A review, {\it Studies in Mathematical Physics: Essays in Honor of Valentine Bargman} (1976), 305--326.

\bibitem{T92}
B.~Thaller,
{\it The {D}irac {E}quation},
Texts and Monographs in Physics, Springer-Verlag, 1992.


\end{thebibliography}
\end{document}